\newenvironment{eq}{\begin{equation}}{\end{equation}} 
\theoremstyle{theorem}
\newtheorem{thm}{Theorem}[section]
\newtheorem{lem}[thm]{Lemma}
\newtheorem{prop}[thm]{Proposition}
\newtheorem{quest}[thm]{Question}
\newtheorem{defi}[thm]{Definition}
\theoremstyle{definition}
\newtheorem{rmq}[thm]{Remark}
\theoremstyle{remark}
\newenvironment{rmq*}
 {\pushQED{\qed}\rmq}
 {\popQED\endrmq}
\newenvironment{exe*}
 {\pushQED{\qed}\exe}
 {\popQED\endexe}
\renewcommand{\bf}[1]{\boldsymbol{#1}}
\newcommand{\bbE}{\mathbb{E}}
\newcommand{\bbN}{\mathbb{N}}
\newcommand{\bbZ}{\mathbb{Z}}
\newcommand{\bfB}{\mathbf{B}}
\newcommand{\bfX}{\mathbf{X}}
\newcommand{\bfY}{\mathbf{Y}}
\newcommand{\bfZ}{\mathbf{Z}}
\newcommand{\bfa}{\mathbf{a}}
\newcommand{\A}{\mathscr{A}}
\newcommand{\calA}{\mathcal{A}}
\newcommand{\B}{\mathscr{B}}
\newcommand{\calB}{\mathcal{B}}
\newcommand{\C}{\mathscr{C}}
\newcommand{\calC}{\mathcal{C}}
\newcommand{\D}{\mathscr{D}}
\newcommand{\E}{\mathscr{E}}
\newcommand{\calE}{\mathcal{E}}
\newcommand{\F}{\mathscr{F}}
\newcommand{\G}{\mathscr{G}}
\newcommand{\calG}{\mathcal{G}}
\newcommand{\scrH}{\mathscr{H}}
\newcommand{\calH}{\mathcal{H}}
\newcommand{\calL}{\mathcal{L}}
\renewcommand{\P}{\mathscr{P}}
\newcommand{\calP}{\mathcal{P}}
\newcommand{\Q}{\mathcal{Q}}
\newcommand{\calT}{\mathcal{T}}
\newcommand{\scrZ}{\mathscr{Z}}
\renewcommand{\a}{\alpha}
\newcommand{\eps}{\varepsilon}
\newcommand{\g}{\gamma}
\renewcommand{\l}{\lambda}
\newcommand{\s}{\sigma}
\renewcommand{\to}{\longrightarrow}
\newcommand{\arr}{\rightarrow}
\renewcommand{\phi}{\varphi}
\newcommand*{\quotient}[2]
{\ensuremath{
    #1/\!\raisebox{-.65ex}{\ensuremath{#2}}}}
\newcommand{\indep}{\perp \!\!\! \perp}
\def\commutatif{\ar@{}[rd]|{\circlearrowleft}}
\definecolor{main}{HTML}{5989cf}
\newtcolorbox{boxB}{
    fontupper = \bf, 
    boxrule = 1.5pt,
    colframe = main,
    rounded corners,
    arc = 5pt   
}
\begin{document}

\title{Introduction to weak Pinsker filtrations}

\author{Séverin Benzoni\\}
\date{}

\maketitle

\begin{abstract}
We introduce the so-called \emph{weak Pinsker dynamical filtrations}, whose existence in any ergodic system follows from the universality of the weak Pinsker property, recently proved by Austin \cite{austin}. These dynamical filtrations appear as a potential tool to describe and classify positive entropy systems. We explore the links between the asymptotic structure of weak Pinsker filtrations and the properties of the underlying dynamical system. A central question is whether, on a given system, the structure of weak Pinsker filtrations is unique up to isomorphism. We give a partial answer, in the case where the underlying system is Bernoulli. We conclude our work by giving two explicit examples of weak Pinsker filtrations.
\end{abstract}

\tableofcontents

\section{Introduction}

In 1958, Kolmogorov and Sinaï introduced the notion of entropy in ergodic theory: the Kolmogorov-Sinaï entropy (or KS-entropy). The  same year, Kolmogorov introduced another important notion: K-systems. He defined a K-system as a dynamical system $\bfX := (X, \A, \mu, T)$ on which there is a generator $\xi$ whose tail $\s$-algebra $\bigcap_{n \geq 1} \sigma(\xi_{]-\infty, -n]})$ is trivial. There is an equivalent definition that is more intrinsic to the system: $\bfX := (X, \A, \mu, T)$ is a K-system if, and only if, every non-trivial observable $\xi_0$ satisfies $h_\mu(\xi, T) > 0$ (a proof of this equivalence, and a more complete presentation of this notion can be found in \cite{ref_K-systemes}). It is also equivalent to assume that the Pinsker factor of the system is trivial, the Pinsker factor being the $\s$-algebra 
$$\Pi_\bfX = \{A \in \A \; | \; h(\mathbbm{1}_A, T) = 0 \}.$$
The Pinsker factor is simply the largest factor of $\bfX$ that is of entropy $0$. Therefore, a K-system has no non-trivial factor of entropy $0$: it is entirely non-deterministic. For example, the most elementary K-systems are the Bernoulli shifts. They are K-systems because i.i.d. processes satisfy Kolmogorov's $0$-$1$ law.

Entropy is an invariant that quantifies the ``chaos'' of a dynamical system, or more precisely its unpredictability, and many of the questions that arose after its discovery were aimed at understanding the structure of this ``chaos''. The first question, which Kolmogorov asked after proving that Bernoulli shifts are K-systems, was whether all K-systems are Bernoulli shifts, which would imply that these chaotic systems have a very simple structure. More general questions then emerged, and we will return to them in the following paragraphs.

The discovery of entropy first led to non-isomorphism results, particularly for Bernoulli shifts: two isomorphic Bernoulli shifts must have the same entropy. The converse of this result, shown by Ornstein \cite{ornstein_iso_bernoulli_fini, ornstein_iso_bernoulli_infini}, is one of the most notable successes of the KS-entropy. But the ramifications of Ornstein's theory go far beyond Bernoulli shifts, and have had a profound impact on the evolution of ergodic theory. We will confine ourselves here to telling the story of the \emph{weak Pinsker property}.

In the early 1960s, Pinsker, then working in Moscow with Kolmogorov, showed that any K-factor of $\bfX := (X, \A, \mu, T)$ is independent of the Pinsker factor $\Pi_\bfX$ (see \cite{pinsker_disjointness}, but this reference is in Russian). Following this result, although the existence of any specific K-factor had not yet been proved, he issued a conjecture (later called the ``Pinsker conjecture''): any system of non-zero entropy is isomorphic to the direct product of its Pinsker factor and a K-system. A few years later, Sinai published \cite{article_sinai} which seemed to confirm this conjecture: he proved the existence of a factor of $\bfX$ isomorphic to a Bernoulli shift of the same entropy as $\bfX$. Given Pinsker's independence result, it would have been sufficient to prove that the factor constructed by Sinaï and the Pinsker factor generate the entire $\s$-algebra $\A$ to obtain a result even stronger than Pinsker's conjecture: $\bfX$ would then be isomorphic to the direct product of its Pinsker factor and a Bernoulli shift. This ``strong Pinsker conjecture'' would also have proved that any K-system is isomorphic to a Bernoulli shift.

But this conjecture turned out to be false: Ornstein published a first example of a non-Bernoulli K-system \cite{exemple_K-sys_non_bernoulli} which contradicts the strong Pinsker conjecture. Following that, many other counterexamples were built, showing that the family of all K-systems is very broad, leaving little hope for a complete classification of those systems. Among all these counterexamples, we can find a construction by Ornstein \cite{K-sys_sans_racine} that can be used to contradict Pinsker's conjecture. Furthermore, he then refined this result by constructing a \textit{mixing} system that does not verify Pinsker's conjecture \cite{contre_exemple_conj_pinsker}. Thus, all the conjectures formulated in the early years of the study of KS-entropy were wrong, revealing a wide variety of possible phenomena.

One of the ramifications of Ornstein's theory can be found in the work of Thouvenot, who, starting in 1975, became interested in relatively Bernoulli systems and developed a ``relative'' version of Ornstein's theory. Following this work, in his 1977 paper \cite{Thouvenot_1977}, he introduced the \emph{weak Pinsker property}: for any $\varepsilon > 0$, $\bfX := (X,  \A, \mu, T)$ is isomorphic to the direct product of a Bernoulli shift $\bfB$ and a system $\bfX_\eps$ of entropy at most $\varepsilon$:
\begin{eq} \label{eq:weak_pinsker_prop}
\bfX \cong \bfX_\eps \otimes \bfB.
\end{eq}

For four decades, however, it was unclear whether all systems verified the weak Pinsker property. But in 2018, Austin published a paper on the subject \cite{austin} in which he proved that all ergodic systems satisfy the weak Pinsker property. 

We can then iterate this splitting operation: take $(\eps_n)_{n \leq -1}$ an increasing sequence of positive numbers such that $\lim_{n\arr -\infty} \eps_n = 0$, and start by splitting $\bfX$ into 
$$\bfX \cong \bfX_{\eps_{-1}} \otimes \bfB_{-1},$$
 then split $\bfX_{\eps_{-1}}$ into 
$$\bfX_{\eps_{-1}} \cong \bfX_{\eps_{-2}} \otimes \bfB_{-2},$$ 
and so on. This yields a sequence of systems $(\bfX_{\eps_n})_{n \leq -1}$ such that, for every $n \leq -1$, $\bfX_{\eps_n}$ is a factor of $\bfX_{\eps_{n+1}}$. By composing the factor maps, it means that each $\bfX_{\eps_n}$ is a factor of $\bfX$, and therefore generates a $T$-invariant $\s$-algebra $\F_n := \s(\bfX_{\eps_n}) \subset \A$. Because of our iterating construction, we see that $\F_n \subset \F_{n+1}$, so the sequence $\F := (\F_n)_{n \leq 0}$ is a filtration. This is what we call a \emph{weak Pinsker filtration on} $\bfX$ (see Definition \ref{def:weak_pinsker_filtr}). 

The purpose of this paper is to discuss the links between weak Pinsker filtrations and the structure of dynamical systems with positive entropy. Weak Pinsker filtrations fall into the category of dynamical filtrations, i.e. filtrations $\F := (\F_n)_{n \leq0}$ on a dynamical system for which each $\s$-algebra $\F_n$ is $T$-invariant. A framework for the study of such filtrations was introduced in \cite{article_PL}, and this will guide our approach of weak Pinsker filtrations. In Section \ref{sect:dyn_filtr}, we introduce the necessary concepts from the theory of dynamical filtrations. This framework is focused on the various possible structures of filtrations whose tail $\s$-algebra $\bigcap_{n\leq 0} \F_n$ is trivial, which is the type of weak Pinsker filtrations that appear on K-systems (see Theorem \ref{thm:structure_filtr_system}). Therefore, the study of weak Pinsker filtrations we suggest would mainly be aimed at classifying K-systems, and in particular non-Bernoulli K-systems.

In Section \ref{sect:weak_pinsker_filtr}, we give an overview of the results and open questions that arise from the study of the properties of weak Pinsker filtrations, and their relation to the structure of the underlying dynamical system. One of those questions concerns the uniqueness, up to isomorphism, of weak Pinsker filtrations. In Section \ref{sect:unique_prob_bernoulli}, we give a partial answer to this uniqueness problem in the case of Bernoulli systems. That section is based on ideas suggested to us by Thouvenot. The main result of this paper is Theorem \ref{thm:unique_weak_pinsker_bernoulli}. Finally, in Section \ref{sect:example_weak_pinsker}, we give explicit examples of weak Pinsker filtrations, in order to give a more concrete meaning to all of those abstract notions.

\section{Weak Pinsker filtrations and related questions}
\label{sect:weak_pinsker_filtr}

In this section, we introduce the notion of weak Pinsker filtrations, the tools necessary to study them and state some of the main questions concerning those filtrations. Since weak Pinsker filtrations are dynamical filtrations, we will use the framework for classifying dynamical filtrations presented in the previous section.

\subsection{Basic notation}

A \emph{dynamical system} is a quadruple $\bfX := (X, \A, \mu, T)$ such that $(X, \A, \mu)$ is a Lebesgue probability space, and $T$ is an invertible measure-preserving transformation. 

Let $\B, \C \subset \A$ be sub-$\sigma$-algebras. We write $\B \subset \C$ mod $\mu$, if for every $B \in \B$, there exists $C \in \C$ such that $\mu(B \Delta C)=0$. Then, $\B = \C$ mod $\mu$ if $\B \subset \C$ mod $\mu$ and $\C \subset \B$ mod $\mu$. We denote $\B \vee \C$ the smallest $\sigma$-algebra that contains $\B$ and $\C$. 
We say that $\B$ is a \emph{factor} $\sigma$-algebra (or $T$-\emph{invariant} $\sigma$-algebra) if $T^{-1}(\B) = \B$ mod $\mu$. Let $\B, \C$ and $\D$ be sub-$\s$-algebras of $\A$. We say that $\B$ and $\C$ are \emph{relatively independent over $\D$} if for any $\B$-measurable bounded function $B$ and $\C$-measurable bounded function $C$
$$\bbE[B C \, | \, \D] = \bbE[B \, | \, \D] \; \bbE[ C \, | \, \D] \; \text{almost surely}.$$
In this case, we write $\B \indep_\D \C$. If $\D$ is trivial, $\B$ and $\C$ are independent, which we denote $\B \indep \C$. 

If we have two systems $\bfX := (X , \A, \mu, T)$ and $\bfY := (Y, \B, \nu, S)$, a \emph{factor map} is a measurable map $\pi: X \to Y$ such that $\pi_*\mu = \nu$ and $\pi \circ T = S \circ \pi$, $\mu$-almost surely. If such a map exists, we say that $\bfY$ is a \emph{factor} of $\bfX$ and we denote $\sigma(\pi) := \pi^{-1}(\B)$ the $\sigma$-algebra generated by $\pi$. Conversely, we also say that $\bfX$ is an extension of $\bfY$ or that $\bfY$ is embedded in $\bfX$. Moreover, if there exist invariant sets $X_0 \subset X$ and $Y_0 \subset Y$ of full measure such that $\pi: X_0 \to Y_0$ is a bijection, then $\pi$ is an \emph{isomorphism} and we write $\bfX \cong \bfY$.

For a given factor $\s$-algebra $\B$, in general, the quadruple $(X, \B, \mu, T)$ is not a dynamical system since $\B$ need not separate points on $X$, and in this case $(X, \B, \mu)$ is not a Lebesgue probability space. However, there always exist a dynamical system $\bfY$ and a factor map $\pi: \bfX \to \bfY$ such that $\s(\pi) = \B$ mod $\mu$. Moreover, this representation is not unique, but for a given factor $\B$, all such systems $\bfY$ are isomorphic and there is a canonical construction to get a system $\quotient{\bfX}{\B}$ and a factor map $p_\B: \bfX \to \quotient{\bfX}{\B}$ such that $\s(p_\B) = \B$ mod $\mu$ (see \cite[Chapter 2, Section 2]{glasner}).

\subsection{Dynamical filtrations}
\label{sect:dyn_filtr}

Let $\bfX := (X, \A, \mu, T)$. A \emph{dynamical filtration} is a pair $(\F, T)$ such that $\F := (\F_n)_{n \leq 0}$ is a filtration in discrete negative time (i.e. $\F_n \subset \F_{n+1}$) on $\A$ and each $\F_n$ is $T$-invariant. The theory of dynamical filtrations was initiated by Paul Lanthier in \cite{these_PL, article_PL}. For our present work, the primary notion we need is a precise definition of what it means for two dynamical filtrations to be isomorphic:

\begin{defi} \label{def:iso_filtration_dyn}
Let $(\F, T_1)$ be a dynamical filtration on $\bfX_1 := (X_1, \A_1, \mu_1, T_1)$ and $(\G, T_2)$ a dynamical filtration on $\bfX_2 := (X_2, \A_2, \mu_2, T_2)$. We say that $(\F, T_1)$ and $(\G, T_2)$ are isomorphic if there is an isomorphism $\Phi: \quotient{\bfX_1}{\F_0} \arr \quotient{\bfX_2}{\G_0}$ such that, for all $n \leq 0$, $\Phi(\F_n) = \G_n$ mod $\mu_2$.
\end{defi}
We will discuss two specific families of filtrations:
\begin{defi} \label{defi:standard_filtration}
Let $(\F, T)$ be a dynamical filtration on $\bfX := (X, \A, \mu, T)$. It is of product type if there is a sequence $(\C_n)_{n \leq 0}$ of mutually independent factor $\s$-algebras such that 
$$\forall n \leq 0, \; \F_n = \bigvee_{k \leq n} \C_k \; \text{ mod } \, \mu.$$
\end{defi}
\begin{defi}
Let $(\F, T)$ be a dynamical filtration on $\bfX := (X, \A, \mu, T)$. It is Kolmogorovian if its tail $\s$-algebra is trivial, i.e. if $\bigcap_{n \leq 0} \F_n = \{\varnothing, X\}$ mod $\mu$.
\end{defi}
In particular, because of Kolmogorov's $0-1$ law, product type filtrations are Kolmogorovian. 

In the theory of dynamical filtrations, additional properties, such as \emph{standardness} and \emph{I-cosiness}, appear naturally (see \cite{article_PL}). However, for now, we are not able to find relevant applications of those notions in the study of weak Pinsker filtrations, so we dot not discuss them in this paper. That being said, standardness and I-cosiness being looser than ``product-type'', a first step in investigating further the examples of Section \ref{sect:example_weak_pinsker} could be to determine whether they are standard, I-cosy or neither. 

\subsection{Reminders on KS-entropy}

The notion of entropy first appeared in mathematics in 1948, introduced by Shannon in his foundational work on information theory \cite{shannon}. It is defined as follows:
\begin{defi}[Shannon entropy]
Let $(X, \A, \mu)$ be a probability space and $\xi: X \arr A$ a random variable, with $A$ finite or countable. The Shannon entropy of $\xi$ is
$$H_\mu(\xi) := - \sum_{a \in A} \mu(\{\xi = a\}) \cdot \log\mu(\{\xi = a\}).$$
\end{defi}
\noindent The number $H_\mu(\xi)$ gives the average amount of information given by the random variable $\xi$. If we have a probability measure $\rho$ defined directly on $A$, we can also define the entropy of that measure 
$$H(\rho) := - \sum_{a \in A} \rho(a) \cdot \log\rho(a).$$
One can also define conditional entropy: for $\chi_1: X \arr Y_1$ and $\chi_2: X \arr Y_2$ be two random variables, we define 
$$H_\mu(\chi_1 \, | \, \chi_2) := \sum_{y_2 \in Y_2} \mu(\chi_2 = y_2) \sum_{y_1 \in Y_1} \phi(\mu(\chi_1 = y_1 \, | \, \chi_2 = y_2)),$$ 
where $\phi(x) = - x \cdot \log(x)$. This quantifies the missing information required to determine $\chi_1$ once $\chi_2$ is known. We refer to \cite[Chapter 2, Section 6]{book_billingsley} for the basic properties of this notion. In the present work, conditional entropy will only serve as a computational tool, via Fano's inequality. See \cite[Theorem 6.2]{book_billingsley} for a proof.
\begin{lem}[Fano's inequality] \label{lem:Fano}
Let $\chi_1: X \arr A$ and $\chi_2: X \arr A$ be two $A$-valued random variables. Set $d := \mu(\chi_1 \neq \chi_2)$. We have
$$H_\mu(\chi_1 \, | \, \chi_2) \leq \phi(d) + \phi(1-d) + d \log(\#A-1).$$
In particular, for $\eps \in ]0, e^{-1}[$, if $\chi$ is an $A$-valued random variable such that there exists $a_0 \in A$ satisfying $\mu(\chi = a_0) \geq 1-\varepsilon$, then
$$H_\mu(\chi) \leq \varepsilon(1 + \log(\varepsilon^{-1}) + \log(\#A-1)).$$
\end{lem}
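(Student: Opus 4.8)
The plan is to run the classical ``error-indicator'' argument, which reduces Fano's inequality to the chain rule for conditional entropy recalled in \cite[Chapter 2, Section 6]{book_billingsley}. Introduce the Bernoulli random variable $E := \mathbbm{1}_{\{\chi_1 \neq \chi_2\}}$; then $\mu(E = 1) = d$ and $H_\mu(E) = \phi(d) + \phi(1-d)$. Since $E$ is a deterministic function of the pair $(\chi_1, \chi_2)$, adjoining $E$ to $\chi_1$ does not change the conditional entropy, so two applications of the chain rule give
\[
H_\mu(\chi_1 \mid \chi_2) = H_\mu\big((\chi_1, E) \mid \chi_2\big) = H_\mu(E \mid \chi_2) + H_\mu(\chi_1 \mid E, \chi_2).
\]

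The next step is to bound the two terms on the right. Conditioning does not increase entropy, so $H_\mu(E \mid \chi_2) \leq H_\mu(E) = \phi(d) + \phi(1-d)$. For the second term, decompose $H_\mu(\chi_1 \mid E, \chi_2)$ according to the two values of $E$: on the event $\{E = 0\}$ one has $\chi_1 = \chi_2$, so the conditional law of $\chi_1$ given $(E,\chi_2)$ is a Dirac mass and this part contributes $0$; on $\{E = 1\}$ one has $\chi_1 \neq \chi_2$, so conditionally on $\chi_2 = y$ the variable $\chi_1$ takes at most $\#A - 1$ values and the corresponding conditional Shannon entropies are each at most $\log(\#A - 1)$. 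Since $\mu(E = 1) = d$, this gives $H_\mu(\chi_1 \mid E, \chi_2) \leq d \log(\#A - 1)$, and adding the two bounds proves the first inequality. (The degenerate cases $d = 0$ and $\#A = 1$ are trivial, with the conventions $\phi(0) = 0$ and $0 \cdot \log 0 = 0$.)

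For the ``in particular'' statement, I would apply the first part with $\chi_1 := \chi$ and with $\chi_2$ the constant random variable equal to $a_0$. A constant random variable is trivial, so $H_\mu(\chi \mid \chi_2) = H_\mu(\chi)$, while $d = \mu(\chi \neq a_0) \leq \varepsilon$. Hence $H_\mu(\chi) \leq \phi(d) + \phi(1-d) + d \log(\#A - 1)$, and it only remains to estimate the right-hand side using $0 \leq d \leq \varepsilon < e^{-1}$. The function $\phi$ is increasing on $]0, e^{-1}[$, so $\phi(d) \leq \phi(\varepsilon) = \varepsilon \log(\varepsilon^{-1})$; the elementary bound $-\log(1-d) \leq d/(1-d)$ (equivalently $\log t \leq t-1$ at $t = (1-d)^{-1}$) gives $\phi(1-d) = -(1-d)\log(1-d) \leq d \leq \varepsilon$; and $d \log(\#A - 1) \leq \varepsilon \log(\#A - 1)$. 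Summing the three estimates yields $H_\mu(\chi) \leq \varepsilon\big(1 + \log(\varepsilon^{-1}) + \log(\#A - 1)\big)$.

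There is no genuine obstacle here; the argument is standard. The only points requiring care are bookkeeping: justifying the chain-rule identities directly from the definition of conditional entropy given above rather than invoking them as black boxes, and checking the two one-variable inequalities used for $\phi$ in the last step. That estimate for $\phi$ is where a reader is most likely to want a line of detail, since the stated $\varepsilon$-bound is deliberately slightly lossy compared with the first, sharp inequality.
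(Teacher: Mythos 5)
Your argument is correct and is the standard error-indicator proof of Fano's inequality; the paper itself does not supply a proof but simply cites Billingsley, and your chain-rule decomposition $H_\mu(\chi_1 \mid \chi_2) = H_\mu(E \mid \chi_2) + H_\mu(\chi_1 \mid E, \chi_2)$ followed by the two bounds is exactly the textbook route. The derivation of the ``in particular'' estimate by taking $\chi_2 \equiv a_0$ and using the monotonicity of $\phi$ on $]0, e^{-1}[$ together with $\phi(1-d) \leq d$ is also sound.
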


In 1958, Kolmogorov and Sinaï used entropy to quantify the randomness of measure preserving dynamical systems. 

Let $\bfX := (X, \A, \mu, T)$ be a dynamical system. To any random variable $\xi_0: X \arr A$, with $A$ finite, we associate $\xi: X \arr A^\bbZ$ the corresponding $T$-process
$$\xi := (\xi_n)_{n \in \bbZ} := (\xi_0 \circ T^n)_{n \in \bbZ}.$$
Also, for $F \subset \bbZ$, set $\xi_F := (\xi_n)_{n \in F}$.

The Kolmogorov-Sinaï entropy (or KS-entropy) of a dynamical system is:
\begin{defi}[Kolmogorov-Sinaï entropy]
Let $\bfX := (X, \A, \mu, T)$ be a dynamical system. For a finite valued random variable $\xi_0: X \arr A$, define
$$h_\mu(\xi, T) := \lim_{n \arr \infty} \frac{1}{n} H_\mu(\xi_{\llbracket 0, n \rrbracket}).$$
For a $T$-invariant $\s$-algebra $\B \subset \A$, define
$$h_\mu(\B, T) := \sup \{h_\mu(\xi, T)\, ; \; \xi_0 \text{ a } \B\text{-measurable random variable} \}.$$
Finally, set
$$h(\bfX) := h_\mu(\A, T).$$
\end{defi}
The KS-entropy satisfies the following continuity result:
\begin{lem} \label{lem:continuity_entropy}
Let $\bfX := (X, \A, \mu, T)$ be a dynamical system and a random variable $\xi_0: X \arr A$, with $A$ finite. For $\eps > 0$, there exists $\delta > 0$ such that, for any random variable $\zeta_0: X \arr A$ such that $\mu(\zeta_0 \neq \xi_0) \leq \delta$, we have 
$$|h_\mu(\xi, T) - h_\mu(\zeta, T)| \leq \eps.$$
\end{lem}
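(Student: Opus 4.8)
The plan is to reduce the statement to two classical facts: the comparison of Kolmogorov--Sina\"i entropies of two finite-valued generators in terms of their mutual conditional entropies, and Fano's inequality (Lemma \ref{lem:Fano}), which bounds those conditional entropies purely in terms of the single-coordinate disagreement probability $\mu(\xi_0 \neq \zeta_0)$. Once both are in place, the lemma follows by a continuity argument at $d = 0$.

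First I would establish (or simply cite from \cite[Chapter 2]{book_billingsley}) the inequality
$$h_\mu(\zeta, T) \le h_\mu(\xi, T) + H_\mu(\zeta_0 \mid \xi_0),$$
together with its symmetric counterpart. To prove it, introduce the joint random variable $\eta_0 := (\xi_0, \zeta_0) : X \arr A \times A$ and its $T$-process $\eta$. Since $\eta_{\llbracket 0, n\rrbracket}$ determines $\xi_{\llbracket 0, n\rrbracket}$ and $\zeta_{\llbracket 0, n \rrbracket}$, the chain rule, subadditivity of conditional entropy, and $T$-invariance of $\mu$ give $H_\mu(\eta_{\llbracket 0,n\rrbracket}) \le H_\mu(\xi_{\llbracket 0,n\rrbracket}) + (n+1) H_\mu(\zeta_0 \mid \xi_0)$; dividing by $n$ and letting $n \arr \infty$ yields
$$h_\mu(\eta, T) \le h_\mu(\xi, T) + H_\mu(\zeta_0 \mid \xi_0).$$
As $\zeta_n = \pi_2(\eta_n)$ for every $n$, we have $H_\mu(\zeta_{\llbracket 0,n\rrbracket}) \le H_\mu(\eta_{\llbracket 0,n\rrbracket})$, hence $h_\mu(\zeta, T) \le h_\mu(\eta, T)$, and combining the two displays gives the claimed bound. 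Exchanging the roles of $\xi$ and $\zeta$ gives the reverse inequality, so
$$|h_\mu(\xi, T) - h_\mu(\zeta, T)| \le H_\mu(\xi_0 \mid \zeta_0) + H_\mu(\zeta_0 \mid \xi_0).$$

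Next, set $d := \mu(\xi_0 \neq \zeta_0) = \mu(\zeta_0 \neq \xi_0)$. If $\#A = 1$ the statement is trivial, so assume $\#A \ge 2$. Applying Lemma \ref{lem:Fano} once to $(\chi_1, \chi_2) = (\xi_0, \zeta_0)$ and once to $(\chi_1, \chi_2) = (\zeta_0, \xi_0)$ gives
$$H_\mu(\xi_0 \mid \zeta_0) + H_\mu(\zeta_0 \mid \xi_0) \le 2\bigl(\phi(d) + \phi(1-d) + d\log(\#A - 1)\bigr).$$
The right-hand side is a fixed function of $d \in [0,1]$, continuous and equal to $0$ at $d = 0$. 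Hence, given $\eps > 0$, I would pick $\delta > 0$ so small that $2\bigl(\phi(d) + \phi(1-d) + d\log(\#A-1)\bigr) \le \eps$ for all $d \in [0,\delta]$; then any $\zeta_0 : X \arr A$ with $\mu(\zeta_0 \neq \xi_0) \le \delta$ satisfies $|h_\mu(\xi,T) - h_\mu(\zeta,T)| \le \eps$, as required.

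I do not expect a genuine obstacle: the argument is a packaging of two standard estimates. The only two points needing care are that the comparison inequality must be applied to the $T$-\emph{processes} of $\xi_0$ and $\zeta_0$ (which is why the joint process $\eta$ is introduced, rather than working with the whole $\s$-algebra $\A$), and that Fano's bound depends only on the one-coordinate disagreement $d$, not on the much larger disagreement between the full processes $\xi$ and $\zeta$ — this is precisely what lets a hypothesis on a single coordinate control the asymptotic quantity $h_\mu$.
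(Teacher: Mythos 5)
Your proof is correct and follows essentially the same route as the paper's: both introduce the joint $T$-process of $\xi_0$ and $\zeta_0$ to derive the comparison $|h_\mu(\xi,T)-h_\mu(\zeta,T)|\le H_\mu(\xi_0\mid\zeta_0)+H_\mu(\zeta_0\mid\xi_0)$, then bound the right-hand side via Fano's inequality (Lemma \ref{lem:Fano}) and use continuity of $\phi$ at $0$ to choose $\delta$. The only cosmetic difference is that the paper carries out the chain-rule/subadditivity manipulation in one display and symmetrizes at the end, whereas you isolate the intermediate inequality $h_\mu(\zeta,T)\le h_\mu(\eta,T)\le h_\mu(\xi,T)+H_\mu(\zeta_0\mid\xi_0)$ as a standalone step.
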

\begin{proof}
In this proof, we will use Fano's inequality (Lemma \ref{lem:Fano}). Specifically, we compute:
\begin{align*}
h(\xi, T) &= \lim_{n \arr \infty} \frac{1}{n} H_\mu(\xi_{[0, n[}) \leq \lim_{n \arr \infty} \frac{1}{n} H_\mu((\xi \vee \zeta)_{[0, n[})\\
&\leq \lim_{n \arr \infty} \frac{1}{n} \left( H_\mu(\zeta_{[0, n[}) + \sum_{j=0}^{n-1} H_\mu(\xi_j \, | \, \zeta_{[0, n[}) \right)\\
&\leq \lim_{n \arr \infty} \frac{1}{n} \left( H_\mu(\zeta_{[0, n[}) + \sum_{j=0}^{n-1} H_\mu(\xi_j \, | \, \zeta_{j}) \right)\\
&\leq h_\mu(\zeta, T) + H_\mu(\xi_0 \, | \, \zeta_0)\\
&\leq h_\mu(\zeta, T) + \phi(d) + \phi(1-d) + d \log(\#A-1).
\end{align*}
where $\phi(x) = - x \cdot \log(x)$. And, since $\phi$ is continuous, there exists $\delta>0$ such that, if $d \leq \delta$, we have
$$h_\mu(\xi, T) \leq h_\mu(\zeta, T) + \eps.$$
By switching $\xi$ and $\zeta$ and doing the same reasoning, we end the proof.
\end{proof}

It is useful to locate the deterministic aspects of a dynamical system. We do that by considering the Pinsker factor of a system. For any factor $\s$-algebra $\B$, we define
$$\Pi_\B = \{A \in \B \; | \; h(\mathbbm{1}_A, T) = 0 \}.$$
The Pinsker factor of the system $\bfX := (X, \A, \mu, T)$ is then defined as $\Pi_\bfX := \Pi_\A$. We will use the following basic result, which can be found in \cite[Theorem 14]{Topics_Parry}:
\begin{lem} \label{lem:pinsker_product}
Let $\bfX := (X, \A, \mu, T)$ be a dynamical system and $\B$ and $\C$ be independent factor $\s$-algebras. We have
$$\Pi_{\B \vee \C} = \Pi_{\B} \vee \Pi_{\C}.$$
\end{lem}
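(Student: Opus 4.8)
\emph{Structure of the argument.} The statement contains two inclusions, and only one of them uses the hypothesis $\B \indep \C$. The inclusion $\Pi_\B \vee \Pi_\C \subseteq \Pi_{\B\vee\C}$ holds in general: $\Pi_\B$ and $\Pi_\C$ are factor $\s$-algebras of zero entropy (the basic property of the Pinsker factor), they are contained in $\B \vee \C$, and $\Pi_{\B\vee\C}$ is the largest zero-entropy factor $\s$-algebra inside $\B\vee\C$ (any zero-entropy factor $\D\subseteq\B\vee\C$ satisfies $h_\mu(\mathbbm1_A,T)\leq h_\mu(\D,T)=0$ for all $A\in\D$, so $\D\subseteq\Pi_{\B\vee\C}$); since $\Pi_{\B\vee\C}$ is a $\s$-algebra it then contains $\Pi_\B \vee \Pi_\C$. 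So the content of the lemma is the reverse inclusion $\Pi_{\B\vee\C}\subseteq\Pi_\B\vee\Pi_\C$.

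\emph{Reduction to a genuine product.} For the reverse inclusion I would first use $\B\indep\C$ to replace $\bfX$ by a direct product: the canonical system $\quotient{\bfX}{\B\vee\C}$ is isomorphic to $\quotient{\bfX}{\B}\otimes\quotient{\bfX}{\C}=:\bfY\otimes\bfZ$, via an isomorphism sending $\B$ and $\C$ to the two coordinate $\s$-algebras and $\Pi_\B$, $\Pi_\C$ to the Pinsker factors of $\bfY$ and $\bfZ$. Thus it is enough to prove that for $\bfX=\bfY\otimes\bfZ$ with $\A$ the product $\s$-algebra one has $\Pi_\bfX\subseteq\Pi_\bfY\vee\Pi_\bfZ$, each $\Pi$ being identified with its pull-back to the product. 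The key external input is the Rokhlin--Sinaï theorem (see \cite{Topics_Parry}): for an ergodic system with a (finite or countable) generating partition $\alpha$, the Pinsker factor equals the tail $\s$-algebra $\mathcal{T}_\alpha:=\bigcap_{m\geq1}\s\big(\alpha_{]-\infty,-m]}\big)$ of the $T$-process of $\alpha$. Picking generators $\xi$ of $\bfY$ and $\zeta$ of $\bfZ$, the product partition $\xi\vee\zeta$ generates $\bfY\otimes\bfZ$, so $\Pi_\bfX=\mathcal{T}_{\xi\vee\zeta}$; and since the $T$-process of $\xi\vee\zeta$ is the pair of the $S$-process of $\xi$ on $Y$ and the $R$-process of $\zeta$ on $Z$, one has $\s\big((\xi\vee\zeta)_{]-\infty,-m]}\big)=\s\big(\xi_{]-\infty,-m]}\big)\otimes\s\big(\zeta_{]-\infty,-m]}\big)$ for every $m$.

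\emph{The remaining point, which I expect to be the main obstacle.} Everything now reduces to the measure-theoretic fact that, on a product probability space and for decreasing sequences $(\mathcal{A}_m)$, $(\mathcal{B}_m)$ of sub-$\s$-algebras of the two coordinate $\s$-algebras, $\bigcap_m\big(\mathcal{A}_m\otimes\mathcal{B}_m\big)=\big(\bigcap_m\mathcal{A}_m\big)\otimes\big(\bigcap_m\mathcal{B}_m\big)$ mod $\mu$. Granting Rokhlin--Sinaï, this is the one genuinely non-formal step. I would establish it in $L^2$: with $H_m=L^2(\mathcal{A}_m)$ and $K_m=L^2(\mathcal{B}_m)$ one has $L^2(\mathcal{A}_m\otimes\mathcal{B}_m)=H_m\otimes K_m$ (Hilbert tensor product), and for decreasing closed subspaces $H_m\downarrow H$, $K_m\downarrow K$ one checks $\bigcap_m(H_m\otimes K_m)=H\otimes K$ by noting that $f\in H_m\otimes K_m$ iff $(P_{H_m^\perp}\otimes I)f=0$ and $(I\otimes P_{K_m^\perp})f=0$, and that $\overline{\bigcup_m H_m^\perp}=H^\perp$ and $\overline{\bigcup_m K_m^\perp}=K^\perp$ (then truncating to pass from $L^2$ to arbitrary measurable functions). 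Feeding this back yields $\mathcal{T}_{\xi\vee\zeta}=\mathcal{T}_\xi\otimes\mathcal{T}_\zeta$, which by Rokhlin--Sinaï applied to $\bfY$ and to $\bfZ$ equals $\Pi_\bfY\vee\Pi_\bfZ$, finishing the proof. (The definitions here permit non-ergodic $\bfX$; for those I would use the ergodic-decomposition version of Rokhlin--Sinaï, or disintegrate over the invariant $\s$-algebra and apply the ergodic case fibrewise.)
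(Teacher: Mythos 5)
The paper does not prove this lemma itself; it is quoted as \cite[Theorem 14]{Topics_Parry}, so there is no internal argument to compare against. Your proof is nevertheless correct and follows the classical route, which is very likely the same one Parry takes: the inclusion $\Pi_\B\vee\Pi_\C\subseteq\Pi_{\B\vee\C}$ is indeed purely formal (every element of $\Pi_\B$ or $\Pi_\C$ has zero entropy and lies in $\B\vee\C$); the reverse inclusion is where independence matters, and your reduction via $\quotient{\bfX}{\B\vee\C}\cong\quotient{\bfX}{\B}\otimes\quotient{\bfX}{\C}$, the Rokhlin--Sinai identification of the Pinsker algebra with the tail of a generating process, and the commutation of decreasing intersections with tensor products is exactly the standard strategy. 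Your $L^2$ argument for the tensor step is sound: writing membership in $H_m\mathbin{\bar\otimes}K_m$ as the two conditions $(P_{H_m^\perp}\otimes I)f=0$ and $(I\otimes P_{K_m^\perp})f=0$, and using the strong convergence $P_{H_m^\perp}\to P_{H^\perp}$ (since $H_m^\perp$ increases with closed union $H^\perp$), gives the conclusion; passing from $L^2$-measurability back to $\s$-algebra membership is reverse-martingale convergence. It is worth stressing that this commutation of intersection with join genuinely fails without independence (this is the content of von Weizs\"acker's counterexamples), so the hypothesis $\B\indep\C$ is precisely what makes the Hilbert-space step go through. Two caveats that you touch on but should keep explicit: (a) the form of Rokhlin--Sinai you need, namely that the tail of \emph{every} finite generating partition equals the Pinsker factor, is the deep input and deserves a precise citation; (b) that statement requires a finite generator, hence finite entropy, and ergodicity --- for the general case you would indeed need either ergodic decomposition or a limiting argument over finite-entropy factors, as you indicate in your final parenthesis.
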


To be able to compute the entropy of a system, the following result proves to be most useful.
\begin{thm}[Kolmogorov-Sinaï] \label{thm:kolmogorov-sinai}
Let $\bfX := (X, \A, \mu, T)$ be a dynamical system. Consider a finite valued random variable $\xi_0: X \arr A$ and the corresponding $T$-process $\xi := (\xi_0 \circ T^n)_{n \in \bbZ}$. Then we have 
$$h_\mu(\s(\xi), T) = h_\mu(\xi, T).$$
In particular, if $\xi$ is a generator of $\A$ (i.e. $\A = \s(\xi)$ mod $\mu$), then $h(\bfX) = h_\mu(\xi, T)$.
\end{thm}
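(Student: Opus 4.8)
The plan is to prove the two inequalities separately. The inequality $h_\mu(\xi, T) \le h_\mu(\s(\xi), T)$ is immediate: $\xi_0$ is $\s(\xi)$-measurable, so $h_\mu(\xi, T)$ is one of the quantities over which the supremum defining $h_\mu(\s(\xi), T)$ is taken. All the content is therefore in the reverse inequality, and for that it suffices to show that $h_\mu(\zeta, T) \le h_\mu(\xi, T)$ for an arbitrary finite-valued $\s(\xi)$-measurable random variable $\zeta_0 : X \arr B$; taking the supremum over all such $\zeta_0$ then yields $h_\mu(\s(\xi), T) \le h_\mu(\xi, T)$.

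So fix such a $\zeta_0$ and an $\eps > 0$. First I would approximate $\zeta_0$ by a random variable depending on only finitely many coordinates of the process $\xi$. Since $\s(\xi) = \bigvee_{N \ge 0} \s(\xi_{\llbracket -N, N \rrbracket})$, the conditional expectations $\bbE[\mathbbm 1_{\{\zeta_0 = b\}} \mid \s(\xi_{\llbracket -N, N\rrbracket})]$ converge $\mu$-almost surely to $\mathbbm 1_{\{\zeta_0 = b\}}$ as $N \arr \infty$, by the martingale convergence theorem. Letting $\zeta_0^{(N)}(x)$ be the value $b \in B$ that maximizes $\bbE[\mathbbm 1_{\{\zeta_0 = b\}} \mid \s(\xi_{\llbracket -N, N\rrbracket})](x)$ (ties broken by a fixed order on $B$) produces a $\s(\xi_{\llbracket -N, N\rrbracket})$-measurable, $B$-valued random variable with $\mu(\zeta_0^{(N)} \neq \zeta_0) \arr 0$. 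Applying Lemma~\ref{lem:continuity_entropy} to the pair of $B$-valued random variables $\zeta_0$ and $\zeta_0^{(N)}$, I may then fix $N$ large enough that $h_\mu(\zeta, T) \le h_\mu(\zeta^{(N)}, T) + \eps$.

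Next I would bound $h_\mu(\zeta^{(N)}, T)$ by a window-enlargement argument. For each $k$, the random variable $\zeta_k^{(N)} = \zeta_0^{(N)} \circ T^k$ is $\s(\xi_{\llbracket k - N, k + N\rrbracket})$-measurable, so for every $n \ge 1$ the block $\zeta^{(N)}_{\llbracket 0, n\rrbracket}$ is a deterministic function of $\xi_{\llbracket -N, n + N\rrbracket}$. Hence
$$H_\mu\!\left(\zeta^{(N)}_{\llbracket 0, n\rrbracket}\right) \le H_\mu\!\left(\xi_{\llbracket -N, n + N\rrbracket}\right) = H_\mu\!\left(\xi_{\llbracket 0, n + 2N\rrbracket}\right),$$
the last step using that $\mu$ is $T$-invariant. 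Dividing by $n$ and letting $n \arr \infty$, the prefactor $\tfrac{n + 2N}{n} \arr 1$ absorbs the fixed boundary term, giving $h_\mu(\zeta^{(N)}, T) \le h_\mu(\xi, T)$. Combining with the previous paragraph, $h_\mu(\zeta, T) \le h_\mu(\xi, T) + \eps$; since $\eps$ was arbitrary this gives $h_\mu(\zeta, T) \le h_\mu(\xi, T)$, and the supremum over $\zeta_0$ completes the proof of the main identity. The ``in particular'' clause is then just the special case $\s(\xi) = \A$ mod $\mu$, for which $h_\mu(\s(\xi), T) = h_\mu(\A, T) = h(\bfX)$.

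I expect the only genuinely delicate point to be the first step: one must pass from the coordinatewise martingale convergence to a \emph{single} $B$-valued approximant $\zeta_0^{(N)}$, so that the continuity lemma (which is stated for two random variables with the same finite target) applies verbatim. Everything after that is soft, the underlying mechanism being simply that the Shannon entropy of a block grows at most linearly in its length, so enlarging the observation window by a fixed amount $2N$ leaves the asymptotic growth rate unchanged.
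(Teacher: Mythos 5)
Your proof is correct, and the paper does not actually supply its own proof of this theorem --- it is stated as a classical result --- so there is no internal argument to compare against. Each step checks out: the easy inequality is immediate from the definition of $h_\mu(\s(\xi),T)$ as a supremum; the argmax construction of $\zeta_0^{(N)}$ from the martingale (Lévy upward) convergence does produce a single $B$-valued approximant with $\mu(\zeta_0^{(N)}\neq\zeta_0)\to 0$, which is exactly what Lemma~\ref{lem:continuity_entropy} needs; and the window-enlargement bound $H_\mu(\zeta^{(N)}_{\llbracket 0,n\rrbracket})\le H_\mu(\xi_{\llbracket 0,n+2N\rrbracket})$ together with $\tfrac{n+2N}{n}\to 1$ closes the argument. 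Relative to the usual textbook proof, which phrases the key estimate in terms of the conditional entropy $H_\mu\bigl(\zeta_0 \mid \xi_{\llbracket -N,N\rrbracket}\bigr)\to 0$, your route packages that conditional-entropy control inside the paper's Lemma~\ref{lem:continuity_entropy} (which is itself a consequence of Fano's inequality), so the two arguments are really the same mechanism presented in a slightly different order; the main thing your version buys is that it reuses a lemma the paper already states, at the small cost of the argmax device to manufacture a same-alphabet approximant.
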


\subsection{Ornstein's theory and its relative version}
\label{sect:entropy_ornstein}

From the definition, one easily sees that KS-entropy is invariant under isomorphism of dynamical systems, which makes it a useful tool in the classification of measure preserving dynamical systems. The most remarkable classification results concern Bernoulli and relatively Bernoulli systems:
\begin{defi}[Bernoulli and relatively Bernoulli]
Let $\bfX := (X, \A, \mu, T)$ be a dynamical system. 

We say that $\bfX$ (or $\A$) is \emph{Bernoulli} if there exists a random variable $\xi_0: X \arr A$ such that the corresponding $T$-process $\xi := (\xi_0 \circ T^n)_{n \in \bbZ}$ is i.i.d. and generates $\A$, i.e. we have $\s(\xi) = \A$ mod $\mu$. 

Let $\B \subset \A$ be a factor $\s$-algebra. We say that $\bfX$ (or $\A$) is \emph{relatively Bernoulli over $\B$} if there is an i.i.d. process of the form $\xi := (\xi_0 \circ T^n)_{n \in \bbZ}$ such that $\s(\xi)$ is independent of $\B$ and $\A = \B \vee \s(\xi)$ mod $\mu$. 
\end{defi}
Those two definitions coincide when $\B$ is the trivial factor $\s$-algebra: $\bfX$ is relatively Bernoulli over $\{\varnothing, X\}$ if and only if $\bfX$ is Bernoulli.
\begin{rmq}
We can consider another approach to define Bernoulli systems: take $A$ a finite or countable set and $\rho$ a probability measure on $A$. On the product probability space $(A^\bbZ, \rho^{\otimes \bbZ})$, consider the transformation
$$S: (a_n)_{n \in \bbZ} \mapsto (a_{n+1})_{n \in \bbZ}.$$
The map $S$ is called the \emph{shift} on $A^\bbZ$. One can easily check that $\rho^{\otimes \bbZ}$ is $S$-invariant. Therefore, this yields a measure preserving dynamical system 
\begin{eq} \label{eq:bernoulli_shift}
\bfB := (A^\bbZ, \rho^{\otimes \bbZ}, S),
\end{eq}
which is called a \emph{Bernoulli shift}. Then a system is Bernoulli if and only if it is isomorphic to a Bernoulli shift. Similarly, we can see that a system $\bfX$ is relatively Bernoulli over a factor $\s$-algebra $\B$ if and only if $\bfX$ is isomorphic to a system of the form $\bfY \otimes \bfB$ via a factor map $\phi: \bfX \to \bfY \times \bfB$ such that $\s(\pi_\bfY \circ \phi) = \B$ mod $\mu$ (where $\pi_\bfY$ is the projection of $\bfY \otimes \bfB$ onto $\bfY$). 
\end{rmq}

Using Theorem \ref{thm:kolmogorov-sinai}, it is easy to compute the entropy of a Bernoulli system. Let $\xi$ be an $i.i.d.$ process on $\bfX$ that generates $\A$. We then have
\begin{align*}
h(\bfX) &= h_\mu(\A, T) = h_\mu(\xi, T) = \lim_{n \arr \infty} \frac{1}{n} H_\mu(\xi_{\llbracket 0, n \rrbracket})\\
&= \lim_{n \arr \infty} \frac{1}{n} \sum_{i=0}^n H_\mu(\xi_i) = H_\mu(\xi_0).
\end{align*}
In particular, if $\bfX$ is isomorphic to a system of the form \eqref{eq:bernoulli_shift}, we get 
$$h(\bfX) = h(\bfB) = -\sum_{a \in A} \rho(a) \log(\rho(a)).$$
Since, to be isomorphic, two systems need to have the same entropy, this computation enables us to get a non-isomorphism result between any two Bernoulli systems of different entropy. Remarkably, Ornstein proved that the converse is also true:
\begin{thm}[Ornstein \cite{ornstein_iso_bernoulli_fini, ornstein_iso_bernoulli_infini}]
If $\bfX$ and $\bfY$ are Bernoulli systems such that $h(\bfX) = h(\bfY)$, then $\bfX \cong \bfY$. 
\end{thm}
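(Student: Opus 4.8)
\emph{Proof strategy.} This is Ornstein's isomorphism theorem, and I will only outline the argument, leaving the substantial work to \cite{ornstein_iso_bernoulli_fini, ornstein_iso_bernoulli_infini}. The plan is to introduce Ornstein's $\bar{d}$-metric on finite-alphabet stationary processes and to reduce the statement to two facts: that i.i.d.\ processes are \emph{finitely determined}, and that equal-entropy finitely determined generating processes can be matched by a back-and-forth construction. For two stationary processes $\xi = (\xi_n)_{n \in \bbZ}$ and $\zeta = (\zeta_n)_{n \in \bbZ}$ valued in a common finite set $A$, set
$$\bar{d}(\xi, \zeta) := \inf \bbE_\lambda[\bbm{1}_{\xi_0 \neq \zeta_0}],$$
the infimum running over all shift-invariant couplings $\lambda$ of the two processes. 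One says that $\xi$ is \emph{finitely determined} if for every $\eps > 0$ there are $\delta > 0$ and $N \in \bbN$ such that every stationary $A$-valued process $\zeta$ whose law on the coordinates $\llbracket 0, N \rrbracket$ is within $\delta$ (in total variation) of that of $\xi$, and which satisfies $|h_\mu(\xi, T) - h_\mu(\zeta, T)| < \delta$, has $\bar{d}(\xi, \zeta) < \eps$.

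First I would prove that an i.i.d.\ process is finitely determined, via the \emph{very weak Bernoulli} property: a process is very weak Bernoulli if, for long enough blocks and with high probability, the conditional law of a future block given the infinite past is $\bar{d}$-close to its unconditional law. An i.i.d.\ process satisfies this trivially, since conditioning on the past changes nothing. The substantial analytic input, due to Ornstein, is the implication ``very weak Bernoulli $\Rightarrow$ finitely determined''; it is proved by an iterated coupling argument, assembling stationary couplings block by block and splicing them together with Hall-type ``marriage'' lemmas while controlling the accumulated matching error through the entropy hypothesis. I expect this implication to be the main obstacle: it is the technical heart of Ornstein theory and does not seem to admit a genuine shortcut.

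With finite determinacy of i.i.d.\ processes in hand, I would conclude by a back-and-forth argument. Write $\bfX \cong \bfB_1$ with an i.i.d.\ generating process of law $\rho_1$ and $\bfY \cong \bfB_2$ with an i.i.d.\ generating process of law $\rho_2$; by the entropy computation preceding the statement, $H(\rho_1) = h(\bfX) = h(\bfY) = H(\rho_2)$. Using a copying lemma — inside any system of entropy at least $H(\rho)$ one can realize a partition whose process is $\bar{d}$-close to the i.i.d.\ process of law $\rho$, and this can be done relative to a prescribed sub-$\s$-algebra — I would inductively build partitions $\alpha_k$ of $\bfX$ and $\beta_k$ of $\bfY$ so that the $\alpha_k$-processes form a $\bar{d}$-Cauchy sequence converging to the i.i.d.\ process generating $\bfY$, the $\beta_k$-processes converge to the one generating $\bfX$, and the two families increasingly generate $\A$ and $\B$. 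Since finitely determined processes are closed under $\bar{d}$-limits, the limit processes are again i.i.d.; since they generate, the limit of the induced factor maps is an isomorphism $\bfX \cong \bfY$. The second delicate point is precisely making this back-and-forth converge to an honest isomorphism rather than to a pair of factor maps defined only in the limit — this is arranged by ensuring that at each stage the new partitions refine the old ones up to a small $\bar{d}$-error and that these errors are summable.
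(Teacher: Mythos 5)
The paper states this result and cites Ornstein's original papers without reproducing the argument, so there is no internal proof for your outline to be compared against; it is quoted as a black box. Your sketch is a faithful high-level account of the standard route: introduce $\bar{d}$, show i.i.d.\ processes are very weak Bernoulli (trivially), invoke the implication very weak Bernoulli $\Rightarrow$ finitely determined, and then run the back-and-forth/copying construction to build an isomorphism as a $\bar{d}$-limit, and you correctly flag the two genuinely hard points (the VWB $\Rightarrow$ FD implication and the convergence of the back-and-forth to an actual isomorphism rather than merely a pair of factor maps). None of this constitutes a proof, of course, and you say as much; as a roadmap aligned with the literature the paper itself defers to, it is accurate and appropriately calibrated about where the real work lies.
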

This means that the KS-entropy gives a complete classification of Bernoulli systems. An outstanding result that emerged from Ornstein's theory was a criterion to characterize Bernoulli systems: finite determination. However, although this notion is useful in proving abstract results, when studying a given system, it is not easy to know whether or not it is finitely determined. Because of that, another criterion called \emph{very weak Bernoullicity} was developed (see \cite[Section 7]{book_ornstein}). This is the criterion we are interested in. 

For the remainder of this section, we assume that the processes are defined on finite alphabets. We first need a technical definition. Given a finite alphabet $A$, an integer $\ell \geq 1$ and two words $\mathbf{a}, \mathbf{b} \in A^\ell$ of length $\ell$ on $A$, we define the normalized Hamming metric between $\mathbf{a}$ and $\mathbf{b}$ as:
$$d_\ell(\mathbf{a}, \mathbf{b}) := \frac{1}{\ell} \#\{i \in \llbracket 1, \ell \rrbracket \, | \, a_i \neq b_i\},$$
where $\mathbf{a} = (a_1, ..., a_\ell)$ and $\mathbf{b} = (b_1, ..., b_\ell)$. We then consider the corresponding transportation metric on $\P(A^\ell)$:
$$\forall \mu, \nu \in \P(A^\ell), \; \bar{d_\ell}(\mu, \nu) := \inf\left\{ \int d_\ell(\mathbf{a}, \mathbf{b}) d\l(\mathbf{a}, \mathbf{b}) \, ; \; \l \text{ a coupling of $\mu$ and $\nu$}\right\}.$$
Then a process $\xi$ is said to be very weak Bernoulli if, for some $\ell \geq 1$, the conditional law of $\xi_{[0, \ell[}$ given the past of $\xi$ is close enough to the law of $\xi_{[0, \ell[}$ in the $\bar{d_\ell}$ metric. More formally, we state:

\begin{defi}[Very weak Bernoulli]  \label{def:VWB}
Let $\bfX := (X, \A, \mu, T)$ be an ergodic dynamical system, equipped with a $T$-process $\xi$ taking values in a finite alphabet. We say that $\xi$ is very weak Bernoulli if, for every $\varepsilon > 0$, there exists $\ell \geq 1$ such that for every $m \geq 1$, we have
$$\int \bar{d_\ell}\left(\nu_\ell(\cdot \, | \, \mathbf{a}_{[-m, 0[}), \nu_\ell(\cdot)\right) d\nu(\mathbf{a}) \leq \varepsilon,$$
where $\nu$ is the law of $\xi$ and, for $I \subset \bbZ$, $\nu_\ell(\cdot\, | \, \mathbf{a}_I)$ is the conditional law of $\xi_{[0, \ell[}$ given that $\xi_I$ equals $\mathbf{a}_I$.

If $\A = \s(\xi)$, we say that $\bfX$ (or $\A$) is very weak Bernoulli.
\end{defi}
The fact that very weak Bernoullicity characterizes Bernoulli systems can be stated as follows:
\begin{thm}[see \cite{book_ornstein, FD_implies_VWB}] \label{thm:FD_eq_VWB}
Let $\bfX := (X, \A, \mu, T)$ be a dynamical system. A $T$-process $\xi$ on $\bfX$ is very weak Bernoulli if and only if $\s(\xi)$ is Bernoulli.
\end{thm}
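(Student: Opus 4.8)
The plan is to establish the two implications separately; the forward one (``Bernoulli $\Rightarrow$ VWB'') is elementary, while all the depth sits in the backward one, where I would route through Ornstein's notion of a finitely determined process.

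For ``$\s(\xi)$ Bernoulli $\Rightarrow \xi$ very weak Bernoulli'', fix an i.i.d.\ process $\eta = (\eta_0 \circ T^n)_{n \in \bbZ}$ generating $\s(\xi)$. Since $\xi_0$ is $\s(\eta) = \bigvee_N \s(\eta_{[-N,N]})$-measurable, the martingale convergence theorem lets me, for any $\eps' > 0$, choose $N$ and a function $g$ with $\mu\big(\xi_0 \neq g(\eta_{[-N,N]})\big) < \eps'$. By stationarity and a union bound, the block $\xi_{[0,\ell[}$ agrees with $\big(g(\eta_{[j-N,\,j+N]})\big)_{0 \leq j < \ell}$ on all but an average fraction $< \eps'$ of the coordinates, so up to $\bar{d_\ell}$-error $\eps'$ it is a function of $\eta_{[-N,\,\ell+N[}$. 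Now conditioning on a past $\mathbf{a}_{[-m,0[}$ of $\xi$ is conditioning on a function of $\eta_{[-m-N,\,N[}$, and because $\eta$ is i.i.d.\ this alters the law of a function of $\eta_{[-N,\,\ell+N[}$ only through their overlap $\eta_{[-N,N[}$, hence perturbs at most the first $\approx N$ symbols of the block. Thus, uniformly in $m$, the average over $\mathbf{a}$ of $\bar{d_\ell}\big(\nu_\ell(\cdot\,|\,\mathbf{a}_{[-m,0[}),\,\nu_\ell(\cdot)\big)$ is at most $N/\ell + O(\eps')$; taking $\eps'$ small and then $\ell$ large verifies Definition \ref{def:VWB}.

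For ``$\xi$ very weak Bernoulli $\Rightarrow \s(\xi)$ Bernoulli'' I would proceed in two steps. Step 1 (VWB $\Rightarrow$ FD): given a competitor process $\zeta$ on the same alphabet whose entropy rate is close to $h_\mu(\xi,T)$ and with $\bar{d_k}\big(\mathrm{law}(\xi_{[0,k[}),\,\mathrm{law}(\zeta_{[0,k[})\big)$ small for a suitably large $k = k_0\ell$, I would first telescope the VWB estimate: conditioning block by block shows $\mathrm{law}(\xi_{[0,k_0\ell[})$ is $\bar d$-close to the product $\nu_\ell^{\otimes k_0}$. Then, as $\zeta$ has nearly the same $\ell$-block marginal and nearly the entropy $H(\nu_\ell)/\ell$ of that product, the identity $D\big(\mathrm{law}(\zeta_{[0,k_0\ell[})\,\|\,\nu_\ell^{\otimes k_0}\big) \approx k_0 H(\nu_\ell) - H_\mu(\zeta_{[0,k_0\ell[})$ forces this relative entropy to be small, so Pinsker's inequality puts $\mathrm{law}(\zeta_{[0,k_0\ell[})$ $\bar d$-close to $\nu_\ell^{\otimes k_0}$ as well, and the triangle inequality gives $\bar d(\xi,\zeta)$ small; Lemmas \ref{lem:Fano} and \ref{lem:continuity_entropy} are the natural tools for the entropy bookkeeping. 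Step 2 (FD $\Rightarrow$ Bernoulli): one must produce inside $\s(\xi)$ an i.i.d.\ generator of entropy $h_\mu(\xi,T)$, and this is Ornstein's fundamental (``copying'') lemma --- build finite partitions $P_n \to \A$ with $P_n$ being $\eps_n$-independent of its $T$-translates and $\eps_n$-uniform, the stage-$(n+1)$ correction repairing the errors of stage $n$ and the $\eps_n$ chosen summable, so that the limit partition is genuinely i.i.d.\ and still generates. I would cite \cite{book_ornstein} for this construction rather than reproduce it.

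The hard part is Step 2: the inductive construction of the i.i.d.\ generator, in which each partition must be perturbed so as to improve simultaneously its independence from its past and the uniformity of its marginal while remaining close to the target $\s$-algebra, with the accumulated errors summable --- this is the technical heart of Ornstein's isomorphism theory. By contrast the forward implication and Step 1 are coupling-and-entropy bookkeeping resting on Lemmas \ref{lem:Fano} and \ref{lem:continuity_entropy}. Accordingly, in a paper of this scope I would give the forward implication and Step 1 in full, and invoke \cite{book_ornstein} as a black box for the copying construction.
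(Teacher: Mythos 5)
The paper does not prove this theorem --- it is cited from \cite{book_ornstein, FD_implies_VWB} --- so it is a result being invoked, not re-derived. Assessing your proof on its own merits: the forward implication, which you call ``elementary,'' contains a genuine gap. You approximate $\xi_0$ by a finite-window function $g(\eta_{[-N,N]})$ with error probability $<\eps'$, and then assert that ``conditioning on a past $\mathbf{a}_{[-m,0[}$ of $\xi$ is conditioning on a function of $\eta_{[-m-N,N[}$.'' That is not correct: $\xi_{[-m,0[}$ is a function of the \emph{entire} trajectory $\eta$, and the total-variation error in replacing it by the finite-window code $\tilde\xi_{[-m,0[} := (g(\eta_{[j-N,j+N]}))_{j\in[-m,0[}$ grows like $m\eps'$, hence is not small uniformly in $m$. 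Consequently the conditional law of $\eta_{[N,\ell+N[}$ given $\xi_{[-m,0[}=\mathbf{a}$ is in general not the unconditional law, and the coupling you propose (share $\eta_{[N,\ell+N[}$, resample $\eta_{[-N,N[}$ independently) is simply not a coupling of the two measures whose $\bar{d_\ell}$ distance you must bound. Your argument is valid when $\xi$ is an exact sliding-block code of the i.i.d. process $\eta$, but promoting that special case to a general measurable factor is precisely where the machinery enters; in the cited references this direction too is routed through finite determination (i.i.d. $\Rightarrow$ FD, factors of FD processes are FD, FD $\Rightarrow$ VWB). Both implications of the theorem are nontrivial, which is why the paper cites rather than reproves it.

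On the backward direction, the structure VWB $\Rightarrow$ FD and then FD $\Rightarrow$ Bernoulli (citing Ornstein's copying construction) is sound, but the inequality you invoke in Step 1 does not do the job. Pinsker's inequality bounds $\|P-Q\|_{TV}$ by $\sqrt{D(P\|Q)/2}$, and since here $D(P\|Q)$ grows linearly in $k_0$ this gives no useful control on $\bar{d}_{k_0\ell}$; what you need is Marton's transportation inequality for the product measure $\nu_\ell^{\otimes k_0}$, which directly bounds the normalized block-Hamming $W_1$ distance by $\sqrt{D(P\|\nu_\ell^{\otimes k_0})/(2k_0)}$ and thereby turns the per-block entropy defect into a small $\bar d$. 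The approximate identity $D\approx k_0 H(\nu_\ell)-H(P)$ also requires some care --- the Shannon--McMillan--Breiman theorem to control $\bbE_P[-\log\nu_\ell]$, since $-\log\nu_\ell$ is unbounded --- but these are repairable bookkeeping issues, unlike the forward-direction gap above.
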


Following the work of Ornstein, Thouvenot studied relatively Bernoulli systems and adapted the definitions of finite determination and very weak Bernoullicity to get criteria that characterize relatively Bernoulli systems. Here we give his adaptation of very weak Bernoullicity:
\begin{defi}[Relatively very weak Bernoulli]  \label{def:VWB_rel}
Let $\bfX := (X, \A, \mu, T)$ be an ergodic dynamical system, equipped with two $T$-processes $\xi$ and $\eta$ with finite alphabets. We say that $\xi$ is \emph{relatively very weak Bernoulli over} $\eta$ if, for every $\varepsilon > 0$, there exists $\ell \geq 1$ such that for every $m \geq 1$ and for all $k \geq 1$ large enough, we have
$$\int \bar{d_\ell}\left(\nu_\ell(\cdot \, | \, \mathbf{a}_{[-m, 0[}, \mathbf{b}_{[-k, k]}), \nu_\ell(\cdot \, | \, \mathbf{b}_{[-k, k]})\right) d\nu(\mathbf{a}, \mathbf{b}) \leq \varepsilon,$$
where $\nu$ is the law of $(\xi, \eta)$ and, for $I, J \subset \bbZ$, $\nu_\ell(\cdot\, | \, \mathbf{a}_I, \mathbf{b}_J)$ is the conditional law of $\xi_{[0, \ell[}$ given that $\xi_I$ equals $\mathbf{a}_I$ and that $\eta_J$ equals $\mathbf{b}_J$.

If $\A = \s(\xi)$ and $\B = \s(\eta)$, we say that $\bfX$ (or $\A$) is relatively very weak Bernoulli over $\B$.
\end{defi}
Many early results from Thouvenot's theory were stated for relatively finitely determined systems. However, for our work, relative very weak Bernoullicity is a more convenient notion. Fortunately, we have the following equivalence, which enables us to apply to relatively very weak Bernoulli processes results originally stated for relatively finitely determined processes:
\begin{thm}[see \cite{RFD_implies_RVWB}] \label{thm:rfd_eq_rvwb}
Let $\bfX := (X, \A, \mu, T)$ be an ergodic system and $\xi$ and $\eta$ be $T$-processes with finite alphabets defined on $\bfX$. Then $\xi$ is relatively very weak Bernoulli over $\eta$ if and only if it is relatively finitely determined over $\eta$. 
\end{thm}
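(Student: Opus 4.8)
The plan is to mirror the architecture of the non-relative equivalence (Theorem \ref{thm:FD_eq_VWB}), carrying the base process $\eta$ along as a fixed block of coordinates throughout. Write $\nu$ for the law of the joint process $(\xi,\eta)$. Recall that relative finite determination of $\xi$ over $\eta$ means: for every $\eps>0$ there are $\delta>0$ and $n_1\in\bbN$ such that any ergodic system carrying $T$-processes $(\zeta,\theta)$ with $\theta$ isomorphic to $\eta$ (as a process), with the joint law of $(\zeta,\theta)_{[0,n_1[}$ within $\delta$ of that of $(\xi,\eta)_{[0,n_1[}$, and with $|h_\mu(\xi\mid\eta)-h(\zeta\mid\theta)|\leq\delta$, satisfies $\bar d$-closeness of $\zeta$-over-$\theta$ to $\xi$-over-$\eta$.

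For RFD $\Rightarrow$ RVWB I would proceed in three steps. First, a process that is relatively Bernoulli over $\eta$ (for instance a finite code of the $\eta$-process superposed with an independent i.i.d. process) is automatically relatively very weak Bernoulli over $\eta$: once $\ell$ is large the bulk of the block $\xi_{[0,\ell[}$ is, conditionally on the full $\eta$-process, independent of the $\xi$-past, so the $\bar d_\ell$-integrand in Definition \ref{def:VWB_rel} is small. Second, relative very weak Bernoullicity is stable under $\bar d$-approximation over the base: this is a triangle-inequality estimate in the conditional transportation metrics $\bar d_\ell(\,\cdot\mid\mathbf b)$, using that $\bar d$-closeness of two processes controls the average distance between their conditional laws given the base past. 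Third, given $\eps>0$ take $\delta,n_1$ from relative finite determination and construct — by a direct cut-and-stack coding over the $\eta$-process, or via the relative Sinaï theorem of Thouvenot's theory together with a small perturbation — an ergodic system carrying a process $\theta\cong\eta$ and a process $\zeta$ which is relatively Bernoulli over $\theta$, with joint window law within $\delta$ of $(\xi,\eta)_{[0,n_1[}$ and relative entropy within $\delta$ of $h_\mu(\xi\mid\eta)$. Relative finite determination then puts $\xi$-over-$\eta$ within $\bar d$-distance $\eps$ of this relatively Bernoulli (hence relatively VWB) process, and the stability step upgrades this to relative very weak Bernoullicity of $\xi$ itself.

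For the converse, RVWB $\Rightarrow$ RFD, the engine is the relative copying (fundamental) lemma. Given an ergodic $\bfY$ with $(\zeta,\theta)$ as in the definition of RFD, with $\theta\cong\eta$, the relative Rokhlin lemma lets one partition $\bfY$ into towers whose bases are measurable with respect to $\theta$; one then re-codes the $\zeta$-names tower by tower so that the new process matches $\xi$ in the conditional $\bar d_\ell$-metric over the base, the accumulated error being controlled by the relative very weak Bernoulli estimate of $\xi$ over $\eta$ and by the hypothesized closeness of the finite window laws and of the relative entropies. Assembling the block codes produces a process on $\bfY$ that is $\bar d$-close to $\xi$-over-$\eta$ while leaving $\theta$ untouched; choosing the tower heights from the $\ell$ supplied by Definition \ref{def:VWB_rel}, and $\delta$ small, $n_1$ large enough, yields the required estimate.

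The main obstacle is exactly this last direction: the relative copying lemma. Keeping $\theta$ pinned isomorphic to $\eta$ while copying the distribution of $\xi$ forces one to work with the relative Rokhlin lemma and to control the whole family of conditional transportation distances $\bar d_\ell(\,\cdot\mid\mathbf a_{[-m,0[},\mathbf b_{[-k,k]})$ uniformly over the base data — which is precisely the information encoded in Definition \ref{def:VWB_rel} — and matching the bookkeeping of the block-by-block copying to that uniformity is the technical heart of the argument. The remaining ingredients (stability of relative very weak Bernoullicity under $\bar d$, and the construction of the relatively Bernoulli comparison system) are routine relative adaptations of the non-relative arguments behind Theorem \ref{thm:FD_eq_VWB}.
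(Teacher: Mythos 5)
The paper does not prove this statement: it is a quoted theorem, flagged by the bracketed citation \cite{RFD_implies_RVWB}, and the text never even defines relative finite determination, so there is no paper argument to compare yours against. Taken on its own, your outline --- carry $\eta$ along as a fixed block of coordinates, reduce RFD $\Rightarrow$ RVWB to approximation by a relatively Bernoulli comparison system plus $\bar{d}$-stability of relative very weak Bernoullicity, and reduce RVWB $\Rightarrow$ RFD to a relative copying lemma applied over a relative Rokhlin tower --- is the right architecture, mirroring the non-relative equivalence behind Theorem~\ref{thm:FD_eq_VWB} and matching the treatment in the cited literature. But it is a roadmap rather than a proof: you explicitly leave the relative copying lemma unestablished, and the forward direction rests on two further unproved ingredients, namely stability of relative very weak Bernoullicity under $\bar{d}$-perturbation over a fixed base, and a relative Sinai-type theorem producing the comparison system with prescribed window law and relative entropy. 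For this paper the intended treatment is exactly what is written: invoke the equivalence as a black box with a reference; to establish it yourself you would have to fill in each of these lemmas with quantitative block-coding estimates.
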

We give a summary of the results we will use:
\begin{lem} \label{lem:factors_rel_B_are_B}
Let $\bfX := (X, \A, \mu, T)$ be a finite entropy dynamical system and $\B$ a factor $\s$-algebra. Let $\xi$ and $\eta$ be $T$-processes with finite alphabets defined on $\bfX$ such that $\A = \s(\xi)$  and $\B = \s(\eta)$. If $\xi$ is relatively very weak Bernoulli over $\eta$, then
\begin{enumerate}[label = (\roman*)]
\item $\bfX$ is relatively Bernoulli over $\B$,
\item any $T$-process $\rho$ on $\bfX$ is relatively very weak Bernoulli over $\eta$,
\item for any factor $\s$-algebra $\C \subset \A$, $\B \vee \C$ is relatively very weak Bernoulli over $\B$,
\item any factor $\s$-algebra $\C \subset \A$ that is \emph{independent from $\B$} is Bernoulli.
\end{enumerate}
\end{lem}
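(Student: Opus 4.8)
The plan is to obtain item \emph{(i)} directly from Thouvenot's relative Ornstein theory and then bootstrap the remaining items from it. (Throughout I use that the hypothesis ``$\xi$ is relatively very weak Bernoulli over $\eta$'' presupposes, via Definition \ref{def:VWB_rel}, that $\bfX$ is ergodic.) By Theorem \ref{thm:rfd_eq_rvwb}, $\xi$ is relatively finitely determined over $\eta$; since moreover $\s(\xi) = \A$ and $\s(\eta) = \B$, Thouvenot's relative isomorphism theorem (the analogue of Ornstein's theorem, cf.\ \cite{Thouvenot_1977}) asserts precisely that a system admitting a generating process that is relatively finitely determined over a generating process of a factor $\B$ is relatively Bernoulli over $\B$; this is \emph{(i)}. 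Via the Remark following the definition of relative Bernoullicity, it yields a splitting $\bfX \cong \bfY \otimes \bfB'$ with $\s(\pi_\bfY) = \B$ and $\bfB'$ a Bernoulli shift, and then Lemma \ref{lem:pinsker_product} applied to the independent factors $\s(\pi_\bfY)$ and $\s(\pi_{\bfB'})$, together with the fact that Bernoulli shifts are K-systems, gives $\Pi_\bfX = \Pi_\B \subset \B$, which I will need for \emph{(iv)}.

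For \emph{(ii)}, fix a finite-alphabet $T$-process $\rho$ on $\bfX$. Since $\A = \s(\xi)$, the letter $\rho_0$ is $\s(\xi)$-measurable, so for each $N$ there is a process $\rho^{(N)}$ whose letter is a function of $\xi_{[-N,N]}$ with $\mu(\rho_0 \neq \rho^{(N)}_0) \to 0$. I would then establish two stability facts in the relative setting: (a) a finite-window recoding of a process that is relatively very weak Bernoulli over $\eta$ is again relatively very weak Bernoulli over $\eta$ — so each $\rho^{(N)}$ qualifies, being built from $\xi$; and (b) the class of processes relatively very weak Bernoulli over $\eta$ is closed under the relative $\bar{d}$-limit, the key point being that $\rho$ and $\rho^{(N)}$ live jointly with $\eta$ on the same space $\bfX$, so the diagonal coupling and stationarity bound the relevant $\bar{d}_\ell$-distance by $\mu(\rho_0 \neq \rho^{(N)}_0)$. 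Combining (a) and (b) gives that $\rho$ is relatively very weak Bernoulli over $\eta$. (Alternatively one may deduce \emph{(ii)} from \emph{(i)} plus the relative analogue of ``a factor of a Bernoulli system is Bernoulli'': $\s(\rho)\vee\B$ is a factor containing $\B$, hence relatively Bernoulli over $\B$, hence its generator $(\rho,\eta)$ is relatively finitely determined — thus relatively very weak Bernoulli, by Theorem \ref{thm:rfd_eq_rvwb} — over $\eta$; one then drops $\eta$ from the observed process, using that the $\eta$-coordinates of the two conditional laws in Definition \ref{def:VWB_rel} coincide with a common deterministic value, so the $\bar{d}_\ell$ between them reduces to that of the $\rho$-marginals.)

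Item \emph{(iii)} is then immediate: any finite-alphabet process measurable with respect to $\B\vee\C\subset\A$ is relatively very weak Bernoulli over $\eta$ by \emph{(ii)}, which is the meaning of ``$\B\vee\C$ is relatively very weak Bernoulli over $\B$''; since $\bfX$ has finite entropy, Krieger's theorem furnishes a finite generator $\theta$ of $\C$, whence $(\eta,\theta)$ generates $\B\vee\C$ and is relatively very weak Bernoulli over $\eta$, so $\B\vee\C$ is in fact relatively Bernoulli over $\B$ by Theorem \ref{thm:rfd_eq_rvwb} and Thouvenot's isomorphism theorem. For \emph{(iv)}, let $\C\subset\A$ be independent of $\B$. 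If $h_\mu(\C,T)=0$ then $\C\subset\Pi_\bfX\subset\B$ by \emph{(i)}, so $\C\indep\C$ and $\C$ is trivial; otherwise $\quotient{\bfX}{\C}$ is ergodic and aperiodic of finite entropy, hence has a finite generator, giving a finite generating process $\theta$ of $\C$ on $\bfX$. By \emph{(ii)}, $\theta$ is relatively very weak Bernoulli over $\eta$; but $\s(\theta)=\C$ is independent of $\B=\s(\eta)$, so the $\theta$-process is independent of the $\eta$-process, and therefore every conditioning on $\eta$-coordinates in Definition \ref{def:VWB_rel} is vacuous, turning the relative estimate into the unconditional very weak Bernoulli estimate of Definition \ref{def:VWB} for $\theta$. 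Hence $\theta$ is very weak Bernoulli, and $\C=\s(\theta)$ is Bernoulli by Theorem \ref{thm:FD_eq_VWB}.

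I expect the real content to sit in \emph{(ii)}: properties (a) and (b) are standard in Ornstein--Thouvenot theory but need genuine care in the relative formulation — in particular checking that finite-window recodings preserve the relative very weak Bernoulli inequality, and that the relative $\bar{d}$-topology is the correct one for taking limits; if one instead cites the relative factor theorem, the obstacle shifts to pinning down its exact statement in the literature and to the (routine but delicate) bookkeeping of removing $\eta$ from the observed process. By contrast \emph{(i)}, \emph{(iii)}, \emph{(iv)} are essentially formal once \emph{(ii)} and the relative Ornstein theorem are available.
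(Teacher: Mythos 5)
Your proof follows the paper's route: the paper obtains (i) and (ii) by citing Propositions 5 and 4 of \cite{article_thouvenot_RFD} together with Theorem~\ref{thm:rfd_eq_rvwb}, deduces (iii) immediately from (ii) applied to a finite generator of $\B\vee\C$, and proves (iv) exactly as you do --- observing that when $\C\indep\B$ the conditioning on $\eta$-coordinates in Definition~\ref{def:VWB_rel} becomes vacuous, so relative very weak Bernoullicity over $\eta$ degenerates to unconditional very weak Bernoullicity and Theorem~\ref{thm:FD_eq_VWB} finishes. Your sketched stability facts (a) and (b) for (ii) are a re-derivation of what the cited proposition supplies, and the case split on $h_\mu(\C,T)=0$ in (iv) is unnecessary since Theorem~\ref{thm:krieger} as stated already covers any finite-entropy factor $\sigma$-algebra.
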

\begin{proof}
We prove the lemma mainly by referring to the literature. The statement (i) follows from \cite[Proposition 5]{article_thouvenot_RFD} and Theorem \ref{thm:rfd_eq_rvwb}. Then (ii) follows from \cite[Proposition 4]{article_thouvenot_RFD} and Theorem \ref{thm:rfd_eq_rvwb}, and (iii) follows from (ii). Let us prove (iv): take $\rho$ a process on $\bfX$ such that $\C = \s(\rho)$ mod $\mu$. From (ii), we know that $\rho$ is relatively very weak Bernoulli over $\eta$. However, since $\C$ is independent of $\B$, $\rho$ is independent of $\eta$. One can then notice that if we add this independence in the definition of relative very weak Bernoullicity, we end up with the fact that $\rho$ is very weak Bernoulli. Finally, Theorem \ref{thm:FD_eq_VWB} tells us that $\C = \s(\rho)$ is Bernoulli. 
\end{proof}

We have just given many definitions and results concerning processes with finite alphabets, and the $\s$-algebras they generate. The following result from Krieger tells that it is applicable on any finite entropy system:
\begin{thm}[See \cite{ref_thm_krieger}] \label{thm:krieger}
Let $\bfX := (X, \A, \mu, T)$ be an ergodic dynamical system and $\B \subset \A$ be a factor $\s$-algebra. If $h_\mu(\B, T) < \infty$, there exists a finite alphabet $A$ and a random variable $\xi_0: X \arr A$ such that 
$$\B = \s(\{\xi_0\circ T^n\}_{n \in \bbZ}) \text{ mod } \mu.$$
We say that $\xi$ is a finite generator of $\B$.
\end{thm}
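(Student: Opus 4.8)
The plan is to reduce the statement to the classical (non-relative) Krieger generator theorem, applied to the quotient system $\quotient{\bfX}{\B}$. Write $\bfY := \quotient{\bfX}{\B} = (Y, \calC, \nu, S)$ together with the canonical factor map $p_\B : \bfX \to \bfY$ satisfying $\s(p_\B) = \B$ mod $\mu$. Since $\bfX$ is ergodic, its factor $\bfY$ is ergodic as well. Moreover, a finite-valued random variable $\xi_0$ on $X$ is $\B$-measurable mod $\mu$ if and only if it factors as $\xi_0 = \eta_0 \circ p_\B$ for a random variable $\eta_0$ on $Y$, and then $h_\mu(\xi, T) = h_\nu(\eta, S)$ because a process's entropy depends only on its law, which $p_\B$ preserves. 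Taking suprema over such random variables gives $h(\bfY) = h_\mu(\B, T) < \infty$, so $\bfY$ is an ergodic system of finite entropy.

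Now invoke the classical Krieger generator theorem for $\bfY$ (\cite{ref_thm_krieger}): there is a finite alphabet $A$ and a random variable $\eta_0 : Y \arr A$ with $\s(\{\eta_0 \circ S^n\}_{n \in \bbZ}) = \calC$ mod $\nu$. Set $\xi_0 := \eta_0 \circ p_\B : X \arr A$. Since $p_\B \circ T = S \circ p_\B$ mod $\mu$, the corresponding $T$-process satisfies $\xi_0 \circ T^n = \eta_0 \circ S^n \circ p_\B$, whence
$$\s(\{\xi_0 \circ T^n\}_{n \in \bbZ}) = p_\B^{-1}\big(\s(\{\eta_0 \circ S^n\}_{n \in \bbZ})\big) = p_\B^{-1}(\calC) = \s(p_\B) = \B \text{ mod } \mu,$$
which is exactly the desired conclusion. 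So the relative statement carries no content beyond this reduction; the substance is entirely in the absolute theorem, which I would simply cite.

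If instead one wanted a self-contained argument, the absolute Krieger theorem is proved as follows. One may assume $\bfY$ aperiodic, the purely atomic case being a finite cyclic rotation with an evident finite generator. Fix a countable generating partition $Q$ coming from the Lebesgue structure of $(Y, \calC, \nu)$, and build inductively a sequence of finite partitions $P_k$, all over an alphabet of fixed size slightly larger than $e^{h(\bfY)}$, with $P_k$ refining $P_{k-1}$ off a set of small measure and each $P_k$ encoding, inside its names, an increasing portion of the $Q$-name. The mechanism is a Rokhlin tower of large height $N$ carrying a recognizable marker at its base, along which one re-codes blocks of the $Q$-name; by the Shannon--McMillan--Breiman theorem only about $e^{N h(\bfY)}$ length-$N$ $Q$-words are typical, so they can be relabelled by length-$N$ words over the fixed small alphabet, the atypical remainder being absorbed into an extra garbage symbol of vanishing frequency. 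The main obstacle is exactly this bookkeeping: ensuring that the markers stay recognizable through all the successive modifications, that the induced re-coding is invertible, and that the $P_k$ converge fast enough (in symmetric-difference measure) for the limit $P := \lim_k P_k$ to be a genuine finite partition that generates $\calC$ while respecting the alphabet bound. None of this interacts with the factor $\B$, which is why passing to $\quotient{\bfX}{\B}$ is all one needs here.
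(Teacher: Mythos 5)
The paper does not prove this theorem; it simply cites \cite{ref_thm_krieger} (the statement here is the absolute Krieger generator theorem, packaged for a factor $\s$-algebra rather than for the whole system). Your reduction to the absolute case is the correct and essentially canonical way to read the statement: since $h_\mu(\B,T)$ is by definition the supremum of $h_\mu(\xi,T)$ over $\B$-measurable finite-valued $\xi_0$, and every such $\xi_0$ factors mod $\mu$ as $\eta_0 \circ p_\B$ with $h_\mu(\xi,T)=h_\nu(\eta,S)$, one indeed has $h(\quotient{\bfX}{\B}) = h_\mu(\B,T) < \infty$; ergodicity of $\bfX$ passes to the factor; and the pull-back $\xi_0 := \eta_0\circ p_\B$ of a finite generator $\eta_0$ of the quotient generates exactly $\s(p_\B)=\B$ mod $\mu$, by $T$-equivariance of $p_\B$. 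So the proposal is correct and matches the intended reading of the cited result. Your closing sketch of the absolute Krieger theorem (Rokhlin tower with a recognizable marker, SMB to bound the number of typical $Q$-names, successive refinements converging in measure) is a fair high-level account, but since you defer the genuine bookkeeping to \cite{ref_thm_krieger} just as the paper does, nothing further is required.
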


\subsection{Positive entropy systems and weak Pinsker filtrations}

In 2018, Austin proved the following:
\begin{thm}[Austin, 2018, \cite{austin}]
Let $\bfX := (X, \A, \mu, T)$ be an ergodic dynamical system. For every $\eps > 0$ there exists a factor $\s$-algebra $\B$ such that: 
\begin{itemize}
\item $h_\mu(\B, T) \leq \eps$,
\item $\bfX$ is relatively Bernoulli over $\B$.
\end{itemize}
In other words, $\bfX$ has the \emph{weak Pinsker property} (as in \eqref{eq:weak_pinsker_prop}).
\end{thm}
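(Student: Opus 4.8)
This statement is the main theorem of \cite{austin}, so a genuinely self-contained argument would reproduce that entire paper; what follows is only the \emph{shape} of the proof one would carry out, which is Austin's. The point to keep in mind is that, although the conclusion is about a single ergodic system, the proof is driven by a new \emph{finite-dimensional} input: a concentration-of-measure inequality for probability measures on finite product spaces.

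First I would replace $\bfX$ by a process. Fixing a generating partition (after truncating a part of negligible entropy, or working along a refining sequence of finite partitions) one views $\bfX$ through a $T$-process $\xi$ with finite alphabet $A$, so that the problem becomes a statement about the shift-invariant measure $\nu := \xi_*\mu$ on $A^\bbZ$: for every $\eps>0$, exhibit a shift-invariant factor of $\nu$ of entropy rate at most $\eps$ over which $\nu$ is relatively i.i.d. By Lemma \ref{lem:factors_rel_B_are_B}(i) it is in fact enough to produce a factor $\s$-algebra $\B$ with $h_\mu(\B,T)\le\eps$ over which $\xi$ is \emph{relatively very weak Bernoulli} in the sense of Definition \ref{def:VWB_rel}; Theorems \ref{thm:FD_eq_VWB} and \ref{thm:rfd_eq_rvwb} are what legitimise working with this criterion rather than directly with relative Bernoullicity.

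The heart of the matter is the finite-dimensional analysis. For each large $n$ one studies the $n$-block marginal $\nu_n$ of $\nu$ on $A^n$ and proves a structural statement of the following flavour: any measure on a product space $A^n$ can, after conditioning on a suitable ``sufficient statistic'', be made close in an averaged transportation sense (the metrics $\bar{d_\ell}$ of Definition \ref{def:VWB}) to a \emph{mixture of product measures} whose mixing parameter carries at most $\eps$ bits of information \emph{per coordinate}. Quantitatively this is exactly a concentration estimate on the Radon--Nikodym derivative of $\nu_n$ against the product of its one-dimensional marginals. One then records that mixing parameter as an auxiliary observable $\eta^{(n)}$ on each block, taking values in a fixed finite alphabet $A'$, whose per-coordinate Shannon entropy is $\le\eps$; the quantitative form of Fano's inequality in Lemma \ref{lem:Fano} is a natural tool for converting ``concentrated/low-complexity code'' into ``small entropy''.

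Finally I would pass to the limit: distribute the block codes $\eta^{(n)}$ coherently over the orbit (a Rokhlin-tower construction, or equivalently a weak-$*$ compactness argument on the shift-invariant measures on $A^\bbZ\times (A')^\bbZ$), extract a subsequential limit, and check that the limiting auxiliary coordinate generates a $T$-invariant $\s$-algebra $\B$ with $h_\mu(\B,T)\le\eps$ over which $\xi$ is relatively very weak Bernoulli. Lemma \ref{lem:factors_rel_B_are_B}(i) then yields that $\bfX$ is relatively Bernoulli over $\B$, which is \eqref{eq:weak_pinsker_prop} with $\bfX_\eps := \quotient{\bfX}{\B}$ and $\bfB$ the complementary Bernoulli factor. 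I expect the main obstacle to be precisely this finite-dimensional concentration input --- proving, uniformly in $n$, that a ``spread-out'' measure on $A^n$ decomposes as (small-information parameter)$\,\times\,$(conditionally nearly product in $\bar{d_\ell}$). The surrounding ergodic-theoretic bookkeeping (choice of generators, infinite entropy, gluing the block codes, taking weak-$*$ limits) is substantial but is standard relative-Ornstein-theory technique; the concentration inequality is the one genuinely new ingredient, and the reason the weak Pinsker property stayed open for four decades.
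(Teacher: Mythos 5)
The paper states this theorem but gives no proof at all: it is Austin's main result, cited from \cite{austin} and used as a black box throughout (in particular to justify Proposition \ref{prop:pinsker_filtrations_exist}). So there is no in-paper argument to compare your sketch against. Your outline is nonetheless a fair high-level description of Austin's own strategy: reduce to a shift system with a finite generator; prove a finite-dimensional decomposition theorem --- Austin's result is a structural statement that any measure on $A^n$ can be partitioned by a label of small per-coordinate entropy so that the conditional measures exhibit transportation-metric concentration (and hence behave approximately like product measures in the $\bar d_n$ sense), this being driven by a concentration-of-measure inequality; then transport the block labels coherently along orbits via Rokhlin towers and a compactness argument, and close with the relative Ornstein machinery (relative finite determination / relative very weak Bernoullicity, as in Theorem \ref{thm:rfd_eq_rvwb} and Lemma \ref{lem:factors_rel_B_are_B}) to upgrade ``relatively very weak Bernoulli over $\B$'' to ``relatively Bernoulli over $\B$''. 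Two small caveats: the finite-dimensional theorem is not literally ``close to a mixture of product measures'' but rather ``close, after a low-entropy conditioning, to measures satisfying a $T$-concentration inequality'', which is a slightly weaker and more robust statement; and the gluing step is delicate (one must control how the block labelings from different scales interact) and is where a substantial share of Austin's technical work lies, so calling it ``standard relative-Ornstein bookkeeping'' understates it. As a proof \emph{proposal} for a theorem that the paper itself only cites, though, this is an accurate sketch.
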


\begin{defi} \label{def:weak_pinsker_filtr}
Let $\bfX := (X, \A, \mu, T)$ be a dynamical system and $\F := (\F_n)_{n \leq 0}$ a dynamical filtration on $\bfX$ such that $\F_0 = \A$. We say that $\F$ is a \emph{weak Pinsker filtration} if 
\begin{itemize}
\item for every $n \leq -1$, $\F_{n+1}$ is relatively Bernoulli over $\F_n$,
\item and $$\lim_{n \arr -\infty} h_\mu(\F_n, T) = 0.$$
\end{itemize}
\end{defi}

Then, by iterating Austin's theorem, we see that we can obtain weak Pinsker filtrations on any ergodic system:
\begin{prop} \label{prop:pinsker_filtrations_exist}
Let $\bfX := (X, \A, \mu, T)$ be a dynamical system. If $\bfX$ is ergodic, there exists a weak Pinsker filtration on $\bfX$. More specifically, for every increasing sequence $(\eps_n)_{n \leq -1}$ such that $\eps_{-1} \leq h(\bfX)$ and $\lim_{n \arr -\infty}\eps_n = 0$, there exists a weak Pinsker filtration $(\F_n)_{n \leq 0}$ such that $\forall n \leq -1$, $h_\mu(\F_n, T) = \eps_n$.
\end{prop}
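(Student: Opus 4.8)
The plan is to build the filtration recursively using Austin's theorem, and the main work is bookkeeping to make sure that at each step we can hit the prescribed entropy value $\eps_n$ exactly (not merely ``at most $\eps_n$'') while preserving relative Bernoullicity down the tower.

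First I would set up the recursion. Put $\F_0 := \A$ and suppose $\F_{n+1}$ has been constructed as a factor $\s$-algebra of $\A$ with $h_\mu(\F_{n+1},T) = \eps_{n+1}$ (with the convention $\eps_0 := h(\bfX)$, which makes sense since $\eps_{-1} \leq h(\bfX)$ and we may harmlessly extend the sequence by $\eps_0$). Working inside the system $\quotient{\bfX}{\F_{n+1}}$, which is ergodic as a factor of an ergodic system, apply Austin's theorem with parameter $\eps_n$: this produces a factor $\s$-algebra $\F_n' \subset \F_{n+1}$ with $h_\mu(\F_n',T) \leq \eps_n$ over which $\F_{n+1}$ is relatively Bernoulli, say $\F_{n+1} = \F_n' \vee \s(\xi)$ mod $\mu$ with $\s(\xi)$ i.i.d.\ and independent of $\F_n'$. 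The issue is that $h_\mu(\F_n',T)$ may be strictly less than $\eps_n$, and I need equality. To fix this I will enlarge $\F_n'$ by adjoining part of the Bernoulli complement $\s(\xi)$.

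The key step is the following interpolation: if $\xi$ is an i.i.d.\ process with $h_\mu(\xi,T) = c$, then for any $t \in [0,c]$ there is a factor $\s$-algebra $\scrH_t \subset \s(\xi)$ with $h_\mu(\scrH_t,T) = t$ and such that $\s(\xi)$ is relatively Bernoulli over $\scrH_t$ — indeed, refine the single-coordinate alphabet of $\xi$ into an independent pair, i.e.\ write $\xi_0 = (\alpha_0,\beta_0)$ with $\alpha_0 \indep \beta_0$ and $H_\mu(\alpha_0)$ equal to the desired value (one can realize any entropy in $[0,c]$ this way, using that $h_\mu$ of an i.i.d.\ process is the Shannon entropy of one coordinate, by the computation in Section \ref{sect:entropy_ornstein}, and that Shannon entropy of a partition of a Lebesgue space takes all values in $[0,\infty)$); then $\scrH_t := \s(\alpha)$ and $\s(\beta)$ is its independent Bernoulli complement. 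Now set $t := \eps_n - h_\mu(\F_n',T) \geq 0$; since $\F_{n+1}$ has finite entropy we have $t \le c := h_\mu(\s(\xi),T) = \eps_{n+1} - h_\mu(\F_n',T)$, so the interpolation applies. Define $\F_n := \F_n' \vee \scrH_t$. Because $\F_n'$, $\scrH_t$, $\s(\beta)$ are mutually independent, Lemma \ref{lem:pinsker_product}-type additivity of entropy over independent joinings gives $h_\mu(\F_n,T) = h_\mu(\F_n',T) + t = \eps_n$; and $\F_{n+1} = \F_n \vee \s(\beta)$ mod $\mu$ with $\s(\beta)$ i.i.d.\ and independent of $\F_n$, so $\F_{n+1}$ is relatively Bernoulli over $\F_n$. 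Each $\F_n$ is $T$-invariant by construction, and $\F_n \subset \F_{n+1}$, so $(\F_n)_{n\le 0}$ is a dynamical filtration. Finally $h_\mu(\F_n,T) = \eps_n \to 0$ as $n \to -\infty$, so $\F$ is a weak Pinsker filtration with the prescribed entropies.

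The main obstacle I expect is the entropy-additivity bookkeeping: one must be careful that ``relatively Bernoulli'' behaves well under iteration, namely that if $\F_{n+1}$ is relatively Bernoulli over $\F_n$ and $\F_n$ is a factor of $\F_{n+1}$ which is in turn a factor of $\A$, the chosen Bernoulli complements at different levels remain jointly independent — this is automatic here because at stage $n$ the new complement $\s(\beta)$ lives inside $\F_{n+1}$, which at the previous stage was already split off independently from the complement used there. A secondary point needing care is the claim that Shannon entropy of a finite-or-countable partition of a Lebesgue space attains every value in $[0,c]$; this is elementary (interpolate between a trivial partition and one achieving $c$, using continuity of $H$ under moving mass), but should be stated cleanly. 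I would also note explicitly that all the $\s$-algebras involved have finite entropy, which is what makes the subtraction $t = \eps_n - h_\mu(\F_n',T)$ legitimate and bounded by $c$.
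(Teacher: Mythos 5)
The paper gives no formal proof of Proposition \ref{prop:pinsker_filtrations_exist}; it presents the statement as an immediate consequence of iterating Austin's theorem, and the sentence preceding it (``by iterating Austin's theorem, we see that \ldots'') is all the justification offered. Your proposal correctly supplies the recursion, and—more to the point—the extra step needed to hit the prescribed entropy values $\eps_n$ exactly rather than merely bounding them by $\eps_n$, which is a genuine gap that the paper's ``iterate'' phrasing quietly skips over. The entropy-additivity bookkeeping (using that entropy is additive over an independent joining of factors) is correct, as is the observation that ergodicity passes to factors so Austin's theorem applies at each stage.

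One step should be stated more carefully: the interpolation claim. It is indeed true that a Bernoulli factor $\s(\xi)$ of entropy $c$ contains, for each $t\in[0,c]$, a Bernoulli factor $\scrH_t$ of entropy exactly $t$ over which $\s(\xi)$ is relatively Bernoulli. But the phrase ``refine the single-coordinate alphabet of $\xi$ into an independent pair'' does not literally apply to whatever generator Austin's theorem hands you: for a fixed $\xi_0$ (say uniform on a three-element set) there is in general no way to write $\xi_0 = (\alpha_0,\beta_0)$ with $\alpha_0\indep\beta_0$ and prescribed marginal entropies. What one actually needs is to \emph{replace} $\xi$ by a different generator of the same factor $\s$-algebra: choose probability measures $\rho_1$, $\rho_2$ on countable sets $A_1$, $A_2$ with $H(\rho_1)=t$ and $H(\rho_2)=c-t$ (possible since Shannon entropy realizes every value in $[0,\infty]$, which you note), form the Bernoulli shift on $(A_1\times A_2)^\bbZ$ with product measure $(\rho_1\otimes\rho_2)^{\otimes\bbZ}$, and invoke Ornstein's isomorphism theorem (quoted in Section \ref{sect:entropy_ornstein}) to identify $\quotient{\bfX}{\s(\xi)}$ with this shift. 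The pull-back of the coordinate pair then gives a generator $(\alpha_0,\beta_0)$ of $\s(\xi)$ with the required independence and entropy split, still independent of $\F_n'$ since $\s(\xi)$ is. With Ornstein's theorem cited at that spot, the proof is complete and matches what the paper considers routine.
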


This simply tells us that weak Pinsker filtrations exist, but gives no explicit description. To start understanding those filtrations better, we can first link them to the Pinsker factor of the system:
\begin{prop} \label{prop:tail_pinsker_filtr_pinsker_factor}
Let $\bfX := (X, \A, \mu, T)$ be a dynamical system and $\F := (\F_n)_{n \leq 0}$ a weak Pinsker filtration on $\bfX$. Then the tail $\s$-algebra $\F_{-\infty} := \bigcap_{n \leq 0} \F_n$ is the Pinsker factor of $\bfX$.
\end{prop}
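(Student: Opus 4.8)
The plan is to show the two inclusions $\F_{-\infty} \subset \Pi_\bfX$ and $\Pi_\bfX \subset \F_{-\infty}$ separately, where $\F_{-\infty} = \bigcap_{n\leq 0}\F_n$.

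For the inclusion $\F_{-\infty} \subset \Pi_\bfX$, I would argue via entropy. Since $\F$ is a weak Pinsker filtration, $h_\mu(\F_n, T) \to 0$. Fix $A \in \F_{-\infty}$; then $A \in \F_n$ for every $n$, so $h_\mu(\mathbbm{1}_A, T) \leq h_\mu(\F_n, T)$ for all $n$, hence $h_\mu(\mathbbm{1}_A, T) = 0$, i.e. $A \in \Pi_\bfX = \Pi_\A$. This direction is immediate.

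For the reverse inclusion $\Pi_\bfX \subset \F_{-\infty}$, I would use the relatively Bernoulli structure together with Lemma \ref{lem:pinsker_product}. Fix $n \leq -1$. Since $\F_{n+1}$ is relatively Bernoulli over $\F_n$, there is an i.i.d. process $\xi^{(n)}$ with $\s(\xi^{(n)})$ independent of $\F_n$ and $\F_{n+1} = \F_n \vee \s(\xi^{(n)})$ mod $\mu$. By Lemma \ref{lem:pinsker_product} applied to the independent factors $\F_n$ and $\s(\xi^{(n)})$,
$$\Pi_{\F_{n+1}} = \Pi_{\F_n} \vee \Pi_{\s(\xi^{(n)})} = \Pi_{\F_n} \vee \{\varnothing, X\} = \Pi_{\F_n} \ \text{mod } \mu,$$
since a Bernoulli shift is a K-system, so its Pinsker factor is trivial. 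Iterating from $n = 0$ downwards gives $\Pi_\bfX = \Pi_{\F_0} = \Pi_{\F_n}$ mod $\mu$ for every $n \leq 0$. In particular $\Pi_\bfX \subset \F_n$ mod $\mu$ for every $n$, hence $\Pi_\bfX \subset \bigcap_{n\leq 0}\F_n = \F_{-\infty}$ mod $\mu$. Combining the two inclusions yields $\F_{-\infty} = \Pi_\bfX$ mod $\mu$.

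The only subtle point — and the part I would write out most carefully — is the legitimacy of applying Lemma \ref{lem:pinsker_product} at each stage: that lemma is about a $\s$-algebra $\B \vee \C$ with $\B, \C$ independent factor $\s$-algebras inside a fixed ambient system, so one should check that $\F_n$ and $\s(\xi^{(n)})$ are genuinely factor $\s$-algebras of $\bfX$ (they are, being members of a dynamical filtration, resp. generated by a $T$-process) and that $\Pi_{\F_{n+1}}$ computed inside $\bfX$ coincides with the Pinsker factor of the quotient system $\quotient{\bfX}{\F_{n+1}}$, which holds because the entropy of a $\F_{n+1}$-measurable indicator does not depend on the ambient system. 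A minor technicality is that the relatively Bernoulli decomposition of $\F_{n+1}$ over $\F_n$ produces the i.i.d. process $\xi^{(n)}$ only mod $\mu$, so all the $\s$-algebra identities above are understood mod $\mu$; this is harmless since $\Pi$ is itself defined mod $\mu$. I expect no real obstacle beyond this bookkeeping.
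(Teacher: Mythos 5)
Your proof is correct, and the second inclusion is established by a slightly different route than the paper's. The paper shows $\Pi_\bfX \subset \F_n$ by invoking, for each fixed $n$, a \emph{global} decomposition $\A = \F_n \vee \B_n$ with $\B_n$ Bernoulli and independent of $\F_n$, and then applying Lemma \ref{lem:pinsker_product} once: $\Pi_\A = \Pi_{\F_n} \vee \Pi_{\B_n} = \Pi_{\F_n}$. That global decomposition is not literally part of Definition \ref{def:weak_pinsker_filtr} (which only gives the one-step decompositions $\F_{n+1} = \F_n \vee \s(\xi^{(n)})$), so the paper is implicitly relying on the transitivity of relative Bernoullicity, i.e.\ that a chain of relatively Bernoulli steps $\F_n \subset \F_{n+1} \subset \dots \subset \F_0 = \A$ produces a single Bernoulli complement of $\F_n$ in $\A$. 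Your version instead applies Lemma \ref{lem:pinsker_product} \emph{locally} at each step, getting $\Pi_{\F_{n+1}} = \Pi_{\F_n}$ and iterating to conclude $\Pi_\bfX = \Pi_{\F_0} = \Pi_{\F_n}$ for every $n$. This avoids having to assemble (or appeal to) a global Bernoulli complement, and uses only what the definition literally provides, which is a small but real simplification. Your remark that $\Pi_{\F_{n+1}}$ can be computed inside $\bfX$ without passing to the quotient $\quotient{\bfX}{\F_{n+1}}$ is correct and worth keeping, since the Pinsker factor of a sub-$\s$-algebra is intrinsic (it depends only on $h_\mu(\mathbbm{1}_A, T)$ for $\F_{n+1}$-measurable $A$). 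The first inclusion matches the paper's argument essentially verbatim.
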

\begin{proof}
Let $\F := (\F_n)_{n \leq 0}$ be a weak Pinsker filtration on $\bfX$. Since, for $n_0 \leq 0$, $\F_{-\infty} \subset \F_{n_0}$, it follows that $h_\mu(\F_{-\infty}, T) \leq h_\mu(\F_{n_0}, T)$. Then, by taking $n_0 \arr -\infty$, this yields $h_\mu(\F_{-\infty}, T) = 0$. Therefore, $\F_{-\infty} \subset \Pi_\bfX$. 

Conversely, let us show that, for every $n \leq 0$, $\Pi_\bfX \subset \F_n$. Since $\F$ is a weak Pinsker filtration, we can choose $\B_n \subset \A$ a Bernoulli factor $\s$-algebra such that 
$$\F_n \indep \B_n \; \text{ and } \; \F_n \vee \B_n = \A \text{ mod } \mu.$$
Then we use Lemma \ref{lem:pinsker_product}: 
$$\Pi_\bfX = \Pi_\A = \Pi_{\F_n} \vee \Pi_{\B_n} = \Pi_{\F_n} \subset \F_n,$$
because, $\B_n$ being Bernoulli, its Pinsker factor is trivial. 
\end{proof}
Weak Pinsker filtrations are dynamical filtrations, and in Section \ref{sect:dyn_filtr}, we introduced tools to classify dynamical filtrations, which we use here. While trying to connect the properties of a weak Pinsker filtration with the properties of the underlying system, we get the following simple results:
\begin{thm} \label{thm:structure_filtr_system}
Let $\bfX := (X, \A, \mu, T)$ be a dynamical system and $\F := (\F_n)_{n \leq 0}$ be a weak Pinsker filtration on $\bfX$. Then
\begin{enumerate} [label = (\roman*)]
\item $\bfX$ is a K-system if and only if $\F$ is Kolmogorovian, i.e. $\bigcap_{n \leq 0} \F_n = \{\varnothing, X\}$ mod $\mu$.
\item If the filtration $\F$ is of product-type, then $\bfX$ is Bernoulli.
\end{enumerate}
\end{thm}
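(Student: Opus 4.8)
The plan is to treat the two parts independently, since part (i) is essentially a reformulation of Proposition \ref{prop:tail_pinsker_filtr_pinsker_factor} and part (ii) is where the content lies. For (i): by definition $\bfX$ is a K-system exactly when its Pinsker factor $\Pi_\bfX$ is trivial, and Proposition \ref{prop:tail_pinsker_filtr_pinsker_factor} identifies $\Pi_\bfX$ with the tail $\s$-algebra $\bigcap_{n \le 0}\F_n$ of the weak Pinsker filtration. Hence $\bfX$ is a K-system if and only if $\bigcap_{n\le 0}\F_n = \{\varnothing, X\}$ mod $\mu$, which is precisely the statement that $\F$ is Kolmogorovian. No further work is required.

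For part (ii), suppose $\F$ is of product type, so that by Definition \ref{defi:standard_filtration} there is a sequence $(\C_k)_{k\le 0}$ of mutually independent factor $\s$-algebras with $\F_n = \bigvee_{k\le n}\C_k$ mod $\mu$ for every $n$; in particular $\A = \F_0 = \bigvee_{k\le 0}\C_k$ mod $\mu$. First I would show that each $\C_n$ is a Bernoulli factor. Indeed, $\C_n \subset \F_n$, and by mutual independence of the $\C_k$ the factor $\C_n$ is independent of $\F_{n-1} = \bigvee_{k\le n-1}\C_k$. On the other hand, $\F_n$ is relatively Bernoulli over $\F_{n-1}$ by the definition of a weak Pinsker filtration, hence in particular relatively very weak Bernoulli over $\F_{n-1}$. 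Choosing finite generators of $\F_n$ and of $\F_{n-1}$ via Krieger's theorem (Theorem \ref{thm:krieger}), Lemma \ref{lem:factors_rel_B_are_B}(iv) applies inside $\quotient{\bfX}{\F_n}$ and shows that $\C_n$, being a factor independent of $\F_{n-1}$, is Bernoulli.

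Granting this, the conclusion follows quickly: $\A = \bigvee_{k\le 0}\C_k$ is a countable join of mutually independent Bernoulli factors, so $\bfX$ is isomorphic to the direct product $\bigotimes_{k\le 0}\quotient{\bfX}{\C_k}$ of Bernoulli systems, and a countable direct product of Bernoulli shifts is again a Bernoulli shift — with one-step alphabet the product of the individual alphabets equipped with the product measure — so $\bfX$ is Bernoulli. (Note that product type implies Kolmogorovian, so part (i) already gives that $\bfX$ is a K-system; part (ii) is the stronger conclusion, which genuinely uses the independence and the Bernoullicity of the pieces, not just triviality of the tail.)

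The hard part will be the bookkeeping with the finite-entropy and finite-alphabet hypotheses: Lemma \ref{lem:factors_rel_B_are_B} and Theorem \ref{thm:krieger} require finite entropy, whereas a weak Pinsker filtration only forces $h_\mu(\F_n,T)\to 0$ as $n\to -\infty$, so $h(\bfX) = h_\mu(\F_0,T)$ may be infinite and $\A = \F_0$ need not admit a finite generator. I would handle this by noting that $h_\mu(\F_n,T) < \infty$ for all sufficiently negative $n$, so the argument above already yields that $\F_{n_0}$ is Bernoulli for some fixed $n_0\le 0$, and then running the same reasoning with ``finite alphabet'' replaced throughout by ``standard Borel alphabet'' in the statements of Krieger's theorem and of Lemma \ref{lem:factors_rel_B_are_B}(iv), which remain valid in that generality, to conclude that $\C_{n_0+1},\dots,\C_0$ are Bernoulli as well; equivalently one simply invokes the elementary fact that an arbitrary countable direct product of Bernoulli shifts over standard Borel alphabets is a Bernoulli shift. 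In this last step ``Bernoulli'' is to be read as ``isomorphic to a Bernoulli shift on a (possibly uncountable) standard Borel alphabet'', which is consistent with the definition given in the text.
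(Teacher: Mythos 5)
Your proof is correct and follows essentially the same route as the paper: part (i) is a direct corollary of Proposition \ref{prop:tail_pinsker_filtr_pinsker_factor}, and for part (ii) you take the mutually independent blocks $\C_n$ from the product-type structure, observe that $\C_n \perp \F_{n-1}$ while $\F_n$ is relatively Bernoulli over $\F_{n-1}$, invoke Lemma \ref{lem:factors_rel_B_are_B}(iv) to conclude each $\C_n$ is Bernoulli, and then pass from a countable join of independent Bernoulli factors to a Bernoulli system. This is exactly the paper's argument.

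One point worth flagging: you noticed a genuine gap that the paper's proof silently skips, namely that Lemma \ref{lem:factors_rel_B_are_B} and Krieger's theorem are stated for finite-entropy systems with finite generators, whereas $\F_0 = \A$ and the intermediate $\F_n$ need not have finite entropy. Your observation that $h_\mu(\F_n,T) < \infty$ for $n$ sufficiently negative is correct and handles the tail. However, your fix for the remaining finitely many steps --- ``run the same argument with standard Borel alphabets, using the general versions of Krieger's theorem and Lemma \ref{lem:factors_rel_B_are_B}(iv)'' --- leans on extensions of those results that the paper neither states nor proves. Those infinite-entropy extensions do exist in the Ornstein--Thouvenot literature, so the claim is true, but as written your argument gestures at them rather than citing or establishing them; if one wanted a self-contained proof one would either restrict to $h(\bfX) < \infty$ (which is what the paper's proof implicitly covers) or supply references for the general-alphabet statements.
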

\begin{proof}
We know that a system is K if and only if its Pinsker factor is trivial. Then the equivalence in (i) follows from Proposition \ref{prop:tail_pinsker_filtr_pinsker_factor}. 

We now prove (ii). Assume that $\F$ is a weak Pinsker filtration of product type. This means that there exists a sequence $(\B_n)_{n \leq 0}$ of mutually independent factor $\s$-algebras such that $\F_n = \bigvee_{k \leq n} \B_k$. Let $n \leq 0$. We know that $\F_n$ is relatively Bernoulli over $\F_{n-1}$ and that $\B_n$ is independent of $\F_{n-1}$. So, Lemma \ref{lem:factors_rel_B_are_B} tells us that $\B_n$ is Bernoulli. Therefore, we have $\A = \F_0 = \bigvee_{k \leq 0} \B_k$, which shows that we can write $\A$ as a product of mutually independent Bernoulli factors. Hence, $\A$ is Bernoulli.
\end{proof}
However, this result leaves many open questions. First, we can ask if the converse of (ii) is true. Since we remark at the end of Section \ref{sect:uniqueness} that, on a Bernoulli shift, there is at least one weak Pinsker filtration of product type, the converse of (ii) is equivalent to the uniqueness problem given in Question \ref{quest:uniqu_problem_bernoulli}. Another area that is left open is to consider other properties from the theory of dynamical filtrations, like standardness or I-cosiness, and wonder what it implies of the system if a weak Pinsker filtrations has those properties:
\begin{quest}
What can we say about $\bfX := (X, \A, \mu, T)$ if there is a weak Pinsker filtration $\F$ on $\bfX$ that is standard ? In that case, is $\bfX$ Bernoulli ? And if the weak Pinsker filtration is I-cosy ?
\end{quest}
\noindent Our hope is that answering those questions could give additional information on the structure of non-Bernoulli K-systems. For precise definitions of standardness and I-cosiness, see \cite{article_PL} or \cite{article_confined}.

\subsection{The uniqueness problem}
\label{sect:uniqueness}
Let $\bfX := (X, \A, \mu, T)$ be an ergodic dynamical system. As mentioned in Proposition \ref{prop:pinsker_filtrations_exist}, the fact that every ergodic systems satisfies the weak Pinsker property implies that, for any given increasing sequence $(\eps_n)_{n \leq -1}$ that goes to $0$ in $-\infty$ such that $\eps_{-1} \leq h(\bfX)$, there exits a weak Pinsker filtration $\F$ on $\bfX$ such that $h_\mu(\F_n, T) = \eps_n$. But this filtration is not unique. Indeed, in the splitting result given by the weak Pinsker property \eqref{eq:weak_pinsker_prop}, the choice of the factor $\s$-algebra generated by $\bfX_{\eps}$ is not unique. For example, take a system of the form 
$$\bfX := \bfZ \otimes \bfB_1 \otimes \bfB_2,$$
where $\bfZ$ is a $0$ entropy system and $\bfB_1$ and $\bfB_2$ are Bernoulli shifts of equal entropy. Note that $\bfZ \otimes \bfB_1$ and $\bfZ \otimes \bfB_2$ generate two different factor $\s$-algebras on $\bfX$. But they are both factors over which $\bfX$ is relatively Bernoulli, and they have the same entropy. However, we can notice in this example that $\bfZ \otimes \bfB_1$ and $\bfZ \otimes \bfB_2$ are isomorphic. This observation hints to a general result:

\begin{thm}[From Thouvenot in \cite{Thouvenot_2008}]
Let $\bfX := (X, \A, \mu, T)$ and $\bfY := (Y, \B, \nu, S)$ be ergodic dynamical systems and $\bfB$ be a Bernoulli shift of finite entropy. If $\bfX \otimes \bfB$ and $\bfY \otimes \bfB$ are isomorphic, then $\bfX$ and $\bfY$ are isomorphic. 
\end{thm}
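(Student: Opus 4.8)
The plan is to move the whole problem inside one system and then run Thouvenot's relative version of Ornstein's theory. First I would fix an isomorphism $\Phi\colon \bfX \otimes \bfB \to \bfY \otimes \bfB$ and set $\bfW := \bfY \otimes \bfB$. In $\bfX \otimes \bfB$ the first-coordinate factor is a copy of $\bfX$ over which $\bfX \otimes \bfB$ is relatively Bernoulli, the Bernoulli complement being the second coordinate, of entropy $b := h(\bfB)$. Transporting through $\Phi$ produces a factor $\s$-algebra $\calG$ of $\bfW$ with $\quotient{\bfW}{\calG} \cong \bfX$, over which $\bfW$ is relatively Bernoulli with a Bernoulli complement $\calC$ of entropy $b$. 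Simultaneously, the first-coordinate factor $\s$-algebra $\calH$ of $\bfW = \bfY \otimes \bfB$ satisfies $\quotient{\bfW}{\calH} \cong \bfY$, with $\bfW$ relatively Bernoulli over $\calH$ and Bernoulli complement $\calC'$ of entropy $b$. Since $b < \infty$ this already forces $h(\bfX) = h(\bfW) - b = h(\bfY)$; call $a$ this common value. Moreover, Lemma~\ref{lem:pinsker_product} applied to the two product decompositions gives $\Pi_\bfW = \Pi_\calG = \Pi_\calH$, so $\Pi_\bfW \subset \calG \cap \calH$. It now suffices to show $\quotient{\bfW}{\calG} \cong \quotient{\bfW}{\calH}$, i.e.\ $\bfX \cong \bfY$.

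The engine is Thouvenot's relative Ornstein theory: relative very weak Bernoullicity coincides with relative finite determination (Theorem~\ref{thm:rfd_eq_rvwb}), and the latter carries a relative isomorphism theorem — two factors of $\bfW$ containing a common sub-factor $\calF$, each relatively Bernoulli over $\calF$ with the same relative entropy, have isomorphic quotient systems, via an isomorphism equal to the identity on $\quotient{\bfW}{\calF}$. Finiteness of $b$ is exactly what makes this usable: the relative entropy of $\bfW$ over $\calG$ is $b < \infty$, so a relative form of Krieger's theorem (Theorem~\ref{thm:krieger}) supplies a finite-alphabet process $\zeta$ with $\bfW = \calG \vee \s(\zeta)$, placing us in the finite-alphabet regime where the $\bar{d_\ell}$-criteria of Definitions~\ref{def:VWB} and~\ref{def:VWB_rel} operate. (If $h(\bfX) = \infty$, one first exhausts $\bfX$ by an increasing sequence of finite-entropy factors and passes to the limit.)

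The core of the argument is to exhibit $\calG$ and $\calH$ as relatively Bernoulli over one and the same factor with equal relative entropy, the natural candidate being $\calF := \calG \cap \calH$ (which contains $\Pi_\bfW$, as noted). For this I would establish a relative analogue of Ornstein's theorem that factors of Bernoulli shifts are Bernoulli: if $\Z$ is a factor of a product $\bfU \otimes \bfB'$ with $\bfB'$ Bernoulli, then $\Z$ is relatively Bernoulli over the largest sub-factor of $\Z$ that is measurable with respect to the $\bfU$-coordinate. Applying this to the decomposition $\bfW \cong (\quotient{\bfW}{\calH}) \otimes \bfC'$ with $\Z = \calG$ gives $\calG$ relatively Bernoulli over $\calG \cap \calH$, and applying it to $\bfW \cong (\quotient{\bfW}{\calG}) \otimes \bfC$ with $\Z = \calH$ gives $\calH$ relatively Bernoulli over $\calG \cap \calH$; both relative entropies equal $a - h(\quotient{\bfW}{\calG \cap \calH})$ since $h(\quotient{\bfW}{\calG}) = h(\quotient{\bfW}{\calH}) = a$. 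The relative isomorphism theorem, invoked with the identity on $\quotient{\bfW}{\calG \cap \calH}$, then yields $\quotient{\bfW}{\calG} \cong \quotient{\bfW}{\calH}$, i.e.\ $\bfX \cong \bfY$.

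The hard part is precisely this relative ``factors of Bernoulli are Bernoulli'' statement and, more delicately, the identification of the right base. Proving it through the relative very weak Bernoulli criterion requires controlling, in the $\bar{d_\ell}$-metric, the conditional law of the future of $\Z$ given both its own past and the whole orbit of its $\bfU$-part, exploiting that conditionally on the full orbit of $\bfU$ the fibre dynamics is i.i.d. The subtle point is that the base must sit inside $\calG$ \emph{and} inside $\calH$, so that the two applications genuinely share it and the relative isomorphism theorem can be applied with the identity map; a naive base like $\calG \cap \calH$ need not be relatively independent of the Bernoulli complement, and reconciling this — plausibly by enlarging the base by a factor of arbitrarily small entropy and tracking the vanishing entropy cost — is where the substantive work is concentrated and where Thouvenot's original ideas enter.
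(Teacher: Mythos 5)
Your proposal takes a genuinely different route from the paper, and it stops short of a proof at exactly the step you flag as ``the hard part.'' The paper does not attempt to compare $\calG$ and $\calH$ over their intersection $\calG \cap \calH$. Instead it first \emph{shrinks} $\bfX$ and $\bfY$ by the weak Pinsker property: write $\bfX \cong \hat\bfX \otimes \hat\bfB$ and $\bfY \cong \hat\bfY \otimes \hat\bfB$ with $h(\hat\bfX)=h(\hat\bfY) \leq a/3$, so that $h(\hat\bfB) \geq 2a/3$. Then in $\bfZ \cong \hat\bfX \otimes (\hat\bfB\otimes\bfB) \cong \hat\bfY \otimes (\hat\bfB\otimes\bfB)$, one looks at the \emph{join} $\s(p_{\hat\bfX} \vee p_{\hat\bfY})$, which by Lemma~\ref{lem:factors_rel_B_are_B}(iii) is automatically relatively very weak Bernoulli over each of $p_{\hat\bfX}$ and $p_{\hat\bfY}$ — no ``relative factors of Bernoulli are Bernoulli'' input is needed, because the join contains each leg. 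This yields $\hat\bfX \otimes \tilde\bfB \cong \hat\bfY \otimes \tilde\bfB$ for a Bernoulli $\tilde\bfB$ with $h(\tilde\bfB) \leq h(\hat\bfX)+h(\hat\bfY) \leq 2a/3 \leq h(\hat\bfB)$, so $\hat\bfB$ splits as $\tilde\bfB \otimes \overline\bfB$ and the two tensor chains can be stitched back into $\bfX \cong \bfY$.

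The gap in your approach is the central claim that $\calG$ (resp.\ $\calH$) is relatively Bernoulli over $\calF := \calG\cap\calH$. This does not follow from the available machinery: Lemma~\ref{lem:factors_rel_B_are_B} gives relative Bernoullicity of a factor $\C \vee \B$ over $\B$, i.e.\ of something \emph{containing} the base over that base, but $\calG$ does not in general contain $\calH$ nor vice versa, and a sub-$\s$-algebra of a relatively Bernoulli extension that does not sit above the base need not be relatively Bernoulli over its trace on the base. Your proposed ``relative analogue of Ornstein's factors-of-Bernoulli theorem'' over the largest $\bfU$-measurable sub-factor of $\Z$ is precisely the missing lemma, and it is not established (and I am not aware of it being true in the stated generality); you acknowledge as much. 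Notably, your closing remark — enlarge the base by a factor of arbitrarily small entropy and track the vanishing cost — is essentially the role that the preliminary weak Pinsker splitting of $\bfX$ and $\bfY$ plays in the paper's proof: by arranging $h(\hat\bfX), h(\hat\bfY) \leq a/3$ in advance, the paper guarantees that whatever common Bernoulli $\tilde\bfB$ appears after joining the two legs has entropy $\leq 2a/3$, small enough to be absorbed into $\hat\bfB$. If you want to carry out your program rather than adopt the paper's, that a priori shrinking is the mechanism you should reach for rather than trying to make $\calG\cap\calH$ work as a base.
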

\begin{proof}
This proof relies on the weak Pinsker property of $\bfX$ and $\bfY$, and Lemma \ref{lem:factors_rel_B_are_B}. We also use many times that Bernoulli shifts with the same entropy are isomorphic.

Since $\bfX \otimes \bfB$ and $\bfY \otimes \bfB$ are isomorphic, we have:
\begin{align*}
h(\bfX) =  h(\bfX \otimes \bfB) - h(\bfB) = h(\bfY \otimes \bfB) - h(\bfB) = h(\bfY).
\end{align*}
Set $a := h(\bfX) = h(\bfY)$. We can then apply the weak Pinsker property of $\bfX$ and $\bfY$ to find two systems $\hat\bfX$, $\hat\bfY$ and a Bernoulli shift $\hat\bfB$ such that 
$$h(\hat\bfX) = h(\hat\bfY) \leq a/3,$$
and
$$\bfX \cong \hat\bfX \otimes \hat\bfB \; \text{ and } \; \bfY \cong \hat \bfY \otimes \hat \bfB.$$
This implies 
$$\hat\bfX \otimes (\hat\bfB \otimes \bfB) \cong \hat \bfY \otimes ( \hat \bfB \otimes \bfB).$$ 
In other words, there is a system $\bfZ$ and two factor maps $p_{\hat \bfX}: \bfZ \to \hat\bfX$ and $p_{\hat \bfY}: \bfZ \to \hat\bfY$ such that $\bfZ$ is relatively Bernoulli over $p_{\hat \bfX}$ and relatively Bernoulli over $p_{\hat \bfY}$. But then, Lemma \ref{lem:factors_rel_B_are_B} tells us that the factor $\s$-algebra $\s(p_{\hat \bfX} \vee p_{ \hat \bfY})$ is relatively very weak Bernoulli over $p_{\hat \bfX}$ and relatively very weak Bernoulli over $p_{\hat \bfY}$. Therefore, there exist a Bernoulli shift $\tilde \bfB$ and two factor maps $\phi_1: \bfZ \to \tilde\bfB$ and $\phi_2: \bfZ \to \tilde\bfB$ such that $\phi_1 \indep p_{\hat \bfX}$, $\phi_2 \indep p_{\hat \bfY}$ and 
$$\s(p_{\hat \bfX} \vee \phi_1) = \s(p_{\hat \bfX} \vee p_{\hat \bfY}) = \s(p_{\hat \bfY} \vee \phi_2).$$
This implies that 
$$\hat\bfX \otimes \tilde \bfB \cong \hat\bfY \otimes \tilde\bfB.$$
But, since we chose to have $h(\hat\bfX) = h(\hat\bfY) \leq a/3$, we get 
\begin{align*}
h(\tilde\bfB) \leq h(p_{\hat\bfX} \vee p_{\hat\bfY}) \leq h(\hat \bfX) + h(\hat \bfY) \leq 2a/3 \leq h(\hat \bfB).
\end{align*}
Given a last Bernoulli shift $\overline{\bfB}$ of entropy $h(\hat\bfB) - h(\tilde\bfB)$ we get $\hat\bfB \cong \tilde \bfB \otimes \overline{\bfB}$ and
\begin{align*}
\bfX \cong \hat\bfX \otimes \hat \bfB \cong \hat\bfX \otimes \tilde \bfB \otimes \overline{\bfB} \cong \hat\bfY \otimes \tilde\bfB \otimes \overline{\bfB} \cong \hat\bfY \otimes \hat \bfB \cong \bfY.
\end{align*}
\end{proof}

As a consequence of this result, we see that if $\F := (\F_n)_{n \leq 0}$ and $\G := (\G_n)_{n \leq 0}$ are two weak Pinsker filtrations on $\bfX$ such that, for all $n \leq 0$, $h_\mu(\F_n, T) = h_\mu(\G_n, T)$, then we must have that, for each $n \leq 0$, $\quotient{\bfX}{\F_n} \cong \quotient{\bfX}{\G_n}$. 

However, this only gives ``local isomorphisms'', and it does not necessarily mean that the filtrations $\F$ and $\G$ are isomorphic (according to the notion of isomorphism introduced in Definition \ref{def:iso_filtration_dyn}). Therefore, the following is still an open question:
\begin{quest}
Let $\bfX := (X, \A, \mu, T)$ be an ergodic dynamical system. Are all weak Pinsker filtrations on $\bfX$ with the same entropy isomorphic ?
\end{quest}
\noindent This question is what we call the \emph{uniqueness problem}. 

If $\bfX$ is a Bernoulli shift, and if we take an increasing sequence $(\eps_n)_{n \leq 0}$ such that $\eps_0 = h(\bfX)$, we can take Bernoulli shifts $(\bfB_n)_{n \leq 0}$ such that $h(\bfB_n) = \eps_n - \eps_{n-1}$, and define the system 
$$\bfB := \bigotimes_{n \leq 0} \bfB_n.$$
It is a Bernoulli shift of entropy $\eps_0 = h(\bfX)$, so it is isomorphic to $\bfX$. Through this isomorphism, the factors of the form $\bigotimes_{k \leq n} \bfB_k$ generate a product type weak Pinsker filtration on $\bfX$. Therefore, in the case where $\bfX$ is a Bernoulli shift, the uniqueness problem becomes:
\begin{quest} \label{quest:uniqu_problem_bernoulli}
Let $\bfX := (X, \A, \mu, T)$ be a Bernoulli shift. Are all weak Pinsker filtrations on $\bfX$ of product type ?
\end{quest}

\section{Uniqueness problem on Bernoulli systems}
\label{sect:unique_prob_bernoulli}

In this section, we present our efforts to tackle Question \ref{quest:uniqu_problem_bernoulli}. The ideas developed here come from discussions with Jean-Paul Thouvenot, and we thank him for those insights. 
 Specifically, we are going to show:

\begin{thm} \label{thm:unique_weak_pinsker_bernoulli}
Let $\bfX := (X, \A, \mu, T)$ be a Bernoulli system and let $\F := (\F_n)_{n \leq 0}$ be a weak Pinsker filtration. There exists some sub-sequence $(\F_{n_k})_{k \leq 0}$ which is a weak Pinsker filtration of product type.
\end{thm}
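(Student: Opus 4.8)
The goal is to extract a subsequence $(\F_{n_k})$ of a given weak Pinsker filtration $\F$ on a Bernoulli system $\bfX$ so that the successive ``increments'' become independent, hence Bernoulli, and their join recovers $\A$. The key structural input is that $\bfX$ is Bernoulli and that each $\F_{n+1}$ is relatively Bernoulli over $\F_n$, so by Lemma \ref{lem:factors_rel_B_are_B} every factor of $\bfX$ sitting between two consecutive levels is relatively very weak Bernoulli over the lower one, and any factor independent of some $\F_n$ is Bernoulli. The plan is to build, by a compactness/approximation argument, a complementary Bernoulli factor $\B_n$ to $\F_{n_{k-1}}$ inside $\quotient{\bfX}{\F_{n_k}}$ with the extra feature that the $\B_n$'s can be chosen \emph{mutually independent}, which is exactly what ``product type'' requires.

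\textbf{Main steps.} First, fix a finite generator of $\bfX$ (Krieger, Theorem \ref{thm:krieger}) and note $h_\mu(\F_n,T)\to 0$, so by passing to a subsequence I may assume the entropy drops as fast as I like along the $n_k$'s. Second, at each stage $\F_{n_k}$ is relatively Bernoulli over $\F_{n_{k-1}}$, so I can pick a Bernoulli complement $\C_k$ with $\C_k \indep \F_{n_{k-1}}$ and $\F_{n_k} = \F_{n_{k-1}} \vee \C_k$ mod $\mu$. The problem is that $\C_k$ need only be independent of $\F_{n_{k-1}}$, not of the previously chosen complements $\C_1,\dots,\C_{k-1}$ (which all live below $\F_{n_{k-1}}$, so independence from $\F_{n_{k-1}}$ does give $\C_k \indep (\C_1\vee\cdots\vee\C_{k-1})$)—wait, that is automatic. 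So the genuine issue is the \emph{tail}: the $\C_k$'s are automatically mutually independent, and $\bigvee_{k}\C_k$ together with $\F_{-\infty}$ generates $\A$; since $\bfX$ is Bernoulli, by Proposition \ref{prop:tail_pinsker_filtr_pinsker_factor} the tail $\F_{-\infty}=\Pi_\bfX$ is trivial, so in fact $\bigvee_k \C_k = \A$ mod $\mu$ already. Thus the filtration $\G_k := \bigvee_{j\le k}\C_j$ is literally of product type with $\G_0 = \A$, and it remains only to check $\G_k = \F_{n_k}$ mod $\mu$, which follows by a downward induction: $\G_k = \G_{k-1}\vee \C_k = \F_{n_{k-1}}\vee\C_k = \F_{n_k}$ once we know $\G_{k-1}=\F_{n_{k-1}}$, with the base of the induction handled in the limit using triviality of the tail and the fact that $\bigcap_k \F_{n_k} = \F_{-\infty}$.

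\textbf{The delicate point.} The step I expect to require the most care is making the base case of that induction rigorous: the equality $\G_k = \F_{n_k}$ must be bootstrapped from $k=-\infty$ upward, i.e. I first need that the decreasing intersection $\bigcap_k \G_k$ is trivial (true, as $\G$ is product type, by Kolmogorov's $0$-$1$ law) and matches $\F_{-\infty}$, and then that $\F_{n_k} \subset \G_k \vee \F_{-\infty} = \G_k$ mod $\mu$. Establishing $\F_{n_k}\subset \bigvee_{j\le k}\C_j$ mod $\mu$ — rather than merely $\F_{n_k}\subset \F_{-\infty}\vee\bigvee_{j\le k}\C_j$ — is where triviality of $\F_{-\infty}$ (Proposition \ref{prop:tail_pinsker_filtr_pinsker_factor} plus $\bfX$ Bernoulli hence $\Pi_\bfX$ trivial) does the real work. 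I would phrase this as: for each fixed $k$, writing $\A = \F_{n_k}\vee\C_{k}'$ with $\C_k'$ Bernoulli and iterating the relative-Bernoulli splittings down to level $n_{-m}$ shows $\F_{n_k} \subset \F_{n_{-m}}\vee\C_{-m+1}\vee\cdots\vee\C_k$ mod $\mu$ for every $m$; letting $m\to\infty$ and using that $\bigcap_m \F_{n_{-m}} = \F_{-\infty}$ is trivial together with a martingale/continuity argument for conditional expectations yields $\F_{n_k}\subset \bigvee_{j\le k}\C_j$ mod $\mu$, which closes the argument.
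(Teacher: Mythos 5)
There is a genuine gap, and it lies exactly where you flag the ``delicate point.'' You reduce the whole proof to the assertion that once \emph{arbitrary} Bernoulli complements $\C_k$ of $\F_{n_{k-1}}$ inside $\F_{n_k}$ have been chosen, the triviality of the tail $\F_{-\infty}$ forces $\bigvee_k\C_k = \A$. The argument you sketch is an exchange of a decreasing intersection with a join: from $\F_{n_k}\subset\F_{n_{-m}}\vee\bigvee_{j\le k}\C_j$ for all $m$ you want to pass to the limit and conclude $\F_{n_k}\subset\bigvee_{j\le k}\C_j$, i.e.\ that $\bigcap_m\bigl(\D\vee\F_{n_{-m}}\bigr) = \D\vee\bigcap_m\F_{n_{-m}}$ with $\D=\bigvee_{j\le k}\C_j$. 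No martingale or reverse-martingale theorem yields this, and it is false in general even in the static setting. For instance, with i.i.d.\ $\mathrm{Bernoulli}(1/2)$ variables $(\omega_j)_{j\ge 1}$, set $\F_n := \sigma(\omega_j : j > |n|)$ and $\C_n := \sigma(\omega_{|n|+1}\oplus\omega_{|n|+2})$ for $n\le 0$. Then $\C_n\indep\F_{n-1}$, $\F_n = \F_{n-1}\vee\C_n$, the $\C_n$ are mutually independent, and $\bigcap_n\F_n$ is trivial by Kolmogorov's $0$--$1$ law; yet $\bigvee_n\C_n = \sigma(\omega_j\oplus\omega_{j+1} : j\ge 1)$ is strictly smaller than $\A$, being invariant under the measure-preserving involution that flips every coordinate $\omega_j\mapsto 1-\omega_j$, so it cannot contain $\{\omega_1 = 0\}$. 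Hence the complements cannot be chosen arbitrarily, and the assertion ``$\G_0=\A$'' underpinning your downward induction has no justification.

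This is precisely what Proposition \ref{prop:unique_weak_pinsker_bernoulli} is there for. The paper does not take an arbitrary Bernoulli complement $\B$ of $\scrH$; it \emph{constructs} one (via Lemma \ref{lem:unique_weak_pinsker_bernoulli}, using extremality of a fixed generator $\calP$ and Hall's marriage lemma) with the extra approximation property $\calP_0\preceq_\eps\B$, valid once $h_\mu(\scrH,T)$ is small enough. The proof of Theorem \ref{thm:unique_weak_pinsker_bernoulli} then chooses each $n_k$ small enough that a generator of $\F_{n_k}$ is $\eps_{|k|}$-captured by the complements already built, and it is only this careful choice that guarantees $\calP_0$ is $\bigvee_j\B_{n_j}$-measurable, hence $\bigvee_j\B_{n_j}=\A$. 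Your observations that the complements are automatically mutually independent, and that a conditional-expectation argument gives $\F_{n_k}=\bigvee_{j\le k}\C_j$ once $\bigvee_j\C_j=\A$, are both correct, but they are the easy part; the nontrivial content --- the choice of complements --- is exactly what your argument omits.
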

\noindent The fact that we are only able to describe the structure of a sub-sequence of $\F$, for now, seems to be significant. Indeed, we can compare that result to a well known result from Vershik about static filtrations on a probability space: any filtration whose tail $\s$-algebra $\bigcap_{n \leq 0} \F_n$ is trivial has a sub-sequence that is standard (see \cite[Theorem 3]{Em_schach}). However there are many examples of non-standard filtrations with trivial tail $\s$-algebra. Therefore, although the context of Vershik's result is very different, it emphasizes that Theorem \ref{thm:unique_weak_pinsker_bernoulli} does not give a complete answer to Question \ref{quest:uniqu_problem_bernoulli}.

The main step in proving Theorem \ref{thm:unique_weak_pinsker_bernoulli} is contained in the following proposition:
\begin{prop} \label{prop:unique_weak_pinsker_bernoulli}
Let $\bfX := (X, \A, \mu, T)$ be a Bernoulli system of finite entropy and $\calP_0: X \arr A$ a finite generator of $\A$, i.e. a finite valued random variable such that $\A = \s(\{\calP_0 \circ T^n\}_{n \in \bbZ})$. For every $\eps > 0$, there exists $\delta > 0$ such that, if $\scrH \subset \A$ is a factor $\s$-algebra such that $\bfX$ is relatively Bernoulli over $\scrH$, and if $h_\mu(\scrH, T) \leq \delta$, there is a Bernoulli factor $\s$-algebra $\B$ such that 
\begin{enumerate} [label = (\roman*)]
\item $\B \indep \scrH$, 
\item $\A = \scrH \vee \B$ mod $\mu$,
\item and $\calP_0 \preceq_\eps \B$. 
\end{enumerate}
\end{prop}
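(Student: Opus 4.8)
The plan is to exploit that $\bfX$ is relatively Bernoulli over $\scrH$ and, using the relative form of Sinaï's theorem from Thouvenot's theory, to replace a ``generic'' Bernoulli complement of $\scrH$ inside $\A$ by one aligned with the prescribed generator $\calP_0$; the smallness of $h_\mu(\scrH,T)$ is what makes the realignment possible and it is what fixes the threshold $\delta$.

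I would first set up the data and choose $\delta$. Since $\bfX$ is relatively Bernoulli over $\scrH$, fix a Bernoulli factor $\s$-algebra $\B_0$ with $\B_0\indep\scrH$ and $\scrH\vee\B_0=\A$ mod $\mu$, so that $\bfX\cong\quotient{\bfX}{\scrH}\otimes\quotient{\bfX}{\B_0}$ and $h_\mu(\B_0,T)=h(\bfX)-h_\mu(\scrH,T)\ge h(\bfX)-\delta$; note also that $\quotient{\bfX}{\scrH}$, being a factor of the Bernoulli system $\bfX$, is itself Bernoulli, so the $\scrH$-process is as simple as its entropy rate allows. As $\calP_0$ is a generator we have $h_\mu(\calP_0,T)=h(\bfX)$, whereas any process generating a complement of $\scrH$ has entropy rate only $h(\bfX)-h_\mu(\scrH,T)$; by Fano's inequality (as in the proof of Lemma \ref{lem:continuity_entropy}) an $\#A$-valued partition $\calQ_0$ with $\mu(\calQ_0\neq\calP_0)<d$ that generates such a complement forces $h_\mu(\scrH,T)\le\phi(d)+\phi(1-d)+d\log(\#A-1)$. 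So I choose $\delta=\delta(\eps,\#A)>0$ small enough that a deficit of size $\delta$ is still compatible with a Hamming-$\eps$ approximation of an $\#A$-valued process — this is what forces $\delta$ to depend on $\eps$, and it will turn out to be essentially the only constraint.

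The heart of the argument is a relative Sinaï / Ornstein-copying step. Because $\bfX$ is relatively Bernoulli over $\scrH$ with relative entropy $h(\bfX)-h_\mu(\scrH,T)$ and $\calP_0$ is a finite generator of $\A$, the relative version of Ornstein's machinery produces a finite partition $\calQ_0$ with $\mu(\calQ_0\neq\calP_0)<\eps$ — the bound being controlled by the deficit $h_\mu(\scrH,T)\le\delta$ — whose $T$-process $\calQ$ satisfies $\s(\calQ)\indep\scrH$ and $\scrH\vee\s(\calQ)=\A$ mod $\mu$. Concretely, in the product picture $\bfX\cong\quotient{\bfX}{\scrH}\otimes\quotient{\bfX}{\B_0}$ one builds, by a Rokhlin-tower copying construction relativized over $\scrH$, a new Bernoulli complement of $\scrH$ — obtained from $\B_0$ by an $\scrH$-measurable, $T$-equivariant family of measure-preserving coordinate changes of $\quotient{\bfX}{\B_0}$ — that reabsorbs the small $\scrH$-dependence of $\calP_0$, the point being that $h_\mu(\scrH,T)$ is tiny next to the entropy $h(\bfX)-h_\mu(\scrH,T)$ available in the complement. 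Setting $\B:=\s(\calQ)$: (i) and (ii) hold by construction; (iii) holds because $\calQ_0$ is $\B$-measurable with $\mu(\calQ_0\neq\calP_0)<\eps$; and $\B$ is Bernoulli by Lemma \ref{lem:factors_rel_B_are_B}(iv), which applies since $\B$ is a factor $\s$-algebra independent from $\scrH$ and $\bfX$ is relatively Bernoulli (hence relatively very weak Bernoulli, via Krieger) over $\scrH$.

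The main obstacle is exactly this copying step: one must keep $\calQ_0$ within $\eps$ of $\calP_0$ in measure while at the same time making $\calQ$ \emph{exactly} independent of $\scrH$ and ensuring $\scrH\vee\s(\calQ)=\A$ rather than merely an $\eps$-approximation of it; since a generator has full relative entropy, the only slack is the $\delta=h_\mu(\scrH,T)$ we have deliberately made small, and converting ``$\delta$ small'' into ``$\eps$-close'' is the quantitative content of Thouvenot's relative Ornstein theorem. If one only wishes to invoke from it the weaker statement that $\s(\calQ)\indep\scrH$ with $h_\mu(\s(\calQ),T)$ a hair below the full relative entropy, the gap is closed by one extra move: $\scrH\vee\s(\calQ)$ is then a factor of small co-entropy, one picks a Bernoulli complement $\B_1$ of it in $\A$, sets $\B:=\s(\calQ)\vee\B_1$, and checks directly that $\B\indep\scrH$, $\scrH\vee\B=\A$, $\calP_0\preceq_\eps\B$, with $\B$ again Bernoulli by Lemma \ref{lem:factors_rel_B_are_B}(iv).
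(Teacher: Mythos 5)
Your overall architecture — use relative Bernoullicity of $\bfX$ over $\scrH$, produce an approximate complement $\calQ$ with $\calP_0 \preceq_\eps \s(\calQ)$, then correct it with Thouvenot's tools, and invoke Lemma \ref{lem:factors_rel_B_are_B}(iv) at the end — matches the paper's, and your ``one extra move'' at the end (splitting off an extra Bernoulli factor $\B_1$ to restore exact complementarity) is a legitimate and clean maneuver that the paper handles slightly differently via \cite[Proposition 3]{article_thouvenot_RFD}. But there is a genuine gap: the entire ``heart of the argument'' is an appeal to ``the relative version of Ornstein's machinery'' producing a $\calQ_0$ with $\mu(\calQ_0 \neq \calP_0) < \eps$, $\s(\calQ) \indep \scrH$, and $\scrH \vee \s(\calQ) = \A$. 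No off-the-shelf relative Sinaï or relative Ornstein copying theorem delivers this; that copying step \emph{is} the content of the proposition, and the paper must build it by hand in Lemma \ref{lem:unique_weak_pinsker_bernoulli}, via a Rokhlin tower split into $\calH$-columns, extremality of $\calP$ to compare conditional $\calP$-laws across columns, and Hall's marriage lemma to carry out a one-to-one relabeling of $\xi$-names without losing entropy, followed by Thouvenot's Propositions 2\textquotesingle\ and 3 to upgrade the resulting approximately-i.i.d., approximately-independent process to an exact independent complement. You have essentially asserted the conclusion of that lemma.

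Two related points are worth flagging. First, where the Bernoullicity of $\bfX$ (and not merely ``relatively Bernoulli over $\scrH$'') is actually consumed is in the extremality of $\calP$: this is what guarantees that conditioning $\calP$ on a partition of cardinality $\leq 2^{\delta n}$ (the $\calH$-columns) barely perturbs its $\bar d$-distribution, uniformly over most columns, so that names from a model column $C_{\mathbf{h}_0}$ can be matched to names in every other $C_{\mathbf{h}}$. Your sketch never invokes this, yet without it there is no reason the $\scrH$-dependence of $\calP_0$ can be ``reabsorbed'': relative Bernoullicity alone controls the $\xi$-side of the product, not the way $\calP$ sits across $\calH$-fibers. (Indeed Bernoullicity of $\bfX$ is necessary: if $\calP_0\preceq_\eps\B$ for all $\eps>0$ with $\B$ Bernoulli independent of $\scrH$, a limiting argument would force $\A=\s(\calP)$ to be Bernoulli.) Second, your choice of $\delta$ via Fano gives a \emph{necessary} entropy compatibility constraint, but ``it will turn out to be essentially the only constraint'' is not justified and is not how the paper picks $\delta$: there $\delta$ is the threshold coming out of the extremality criterion applied to $\calP$ at scale $\eps^3/4$, which is a genuinely different (and a priori smaller) quantity than a Fano deficit bound. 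The Fano computation tells you an approximation cannot exist if $\delta$ is too large; it says nothing about whether one exists when $\delta$ is small, and the latter is what needs proving.
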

In this proposition, Krieger' theorem (Theorem \ref{thm:krieger}) ensures the existence of a finite generator $\calP$ since $\bfX$ has finite entropy. The notation ``$\calP_0 \preceq_\eps \B$'', which we use many times below, means that there exists a $\B$-measurable random variable $\Q_0$ such that $\mu(\calP_0 \neq \Q_0) \leq \eps$.

The existence of a Bernoulli factor satisfying (i) and (ii) is simply the definition of relative Bernoullicity, the important part of this proposition is the ability to build a Bernoulli complement that satisfies (iii). Then iterating this result will yield Theorem \ref{thm:unique_weak_pinsker_bernoulli} (see Section \ref{sect:proof_of_thm_331}). 

\subsection{The technical lemma}

In this section, we tackle the main technical and constructive part of the proof of Proposition \ref{prop:unique_weak_pinsker_bernoulli}. It is contained in Lemma \ref{lem:unique_weak_pinsker_bernoulli}.

In Section \ref{sect:entropy_ornstein}, we introduced the notion of very weak Bernoullicity, which gives a characterization of Bernoulli systems. Here, we use another equivalent notion: extremality, due to Thouvenot (see \cite[Definition 6.3]{ref_extremalite_better}). 
\begin{defi}
Let $\bfX := (X, \A, \mu, T)$ be an ergodic dynamical system and $\xi := (\xi_0 \circ T^n)_{n \in \bbZ}$ be a process where $\xi_0$ takes values in some finite alphabet $A$. We say that $\xi$ is \emph{extremal} if, for every $\eps >0 $, there exist $\delta >0$ and $N \in \bbN$, such that for every $\ell \geq N$ and every random variable $\Q: X \arr B$ with $\#B \leq 2^{\delta \ell}$, there is a set $B_0 \subset B$ such that $\mu(\Q \in B_0) \geq 1- \eps$ and for $b \in B_0$, we have:
$$\bar{d_\ell}(\nu_\ell(\cdot \, | \, b), \nu_\ell(\cdot)) \leq \eps,$$
where $\nu_\ell$ is the law of $\xi_{[0, \ell[}$ and $\nu_\ell(\cdot \, | \, b)$ is the law of $\xi_{[0, \ell[}$ given that $\Q$ equals $b$.
\end{defi}
In \cite[Theorem 6.4]{ref_extremalite_better}, it is shown that extremality is equivalent to very weak Bernoullicity (and hence to Bernoullicity). In particular, we will use the fact that any process defined on a Bernoulli system is extremal. 

The proof of Lemma \ref{lem:unique_weak_pinsker_bernoulli} uses many methods that are usual in Ornstein's theory of Bernoulli shifts (a presentation can be found in \cite{book_ornstein} or \cite{theory_of_bernoulli_shifts}). Therefore, we need to introduce some commonly used notions and results from that theory. The following combinatorial result is frequently used in Ornstein's theory:
\begin{lem}[Hall's marriage lemma \cite{Hall_lemma_book}] \label{lem:hall}
Let $E$ and $F$ be finite sets, and $\{J_e\}_{e \in E}$ be a family of subsets of $F$: $\forall e \in E, J_e \subset F$. There exists an injective map $\psi: E \arr F$ such that $\forall e \in E, \psi(e) \in J_e$ if, and only if for every $I \subset E$, we have 
$$\#I \leq \#\bigcup_{e \in I} J_e.$$
\end{lem}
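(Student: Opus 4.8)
\emph{Necessity} would be disposed of in one line: given an injective selector $\psi$, for any $I \subset E$ the image $\psi(I)$ is a subset of $\bigcup_{e \in I} J_e$ with $\#I$ elements, whence $\#I \leq \#\bigcup_{e \in I} J_e$. The substance is the converse, which I would prove by induction on $\#E$; the base case $\#E = 1$ is immediate, since Hall's condition applied to the singleton forces $J_e \neq \varnothing$.

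For the inductive step ($\#E \geq 2$, Hall's condition assumed) I would split into two cases. \textbf{Case 1:} for every nonempty proper $I \subsetneq E$ one has $\#\bigcup_{e \in I} J_e \geq \#I + 1$ (``slack everywhere''). Then I would pick any $e_0 \in E$ and any $f_0 \in J_{e_0}$, delete both, and replace each $J_e$ (for $e \neq e_0$) by $J_e \setminus \{f_0\}$; the slack guarantees Hall's condition still holds for this smaller instance, so induction produces an injective selector of $\{J_e \setminus \{f_0\}\}_{e \neq e_0}$ avoiding $f_0$, which I extend by sending $e_0$ to $f_0$. \textbf{Case 2:} some nonempty proper $I_0 \subsetneq E$ is ``tight'', i.e. $\#\bigcup_{e \in I_0} J_e = \#I_0$; write $F_0 := \bigcup_{e \in I_0} J_e$. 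Here I would first apply the induction hypothesis to $I_0$ (Hall's condition restricted to subsets of $I_0$ clearly holds) to obtain an injective selector $\psi_0$ for $\{J_e\}_{e \in I_0}$, which is forced to be a bijection onto $F_0$ because the cardinalities match. Then I would apply the induction hypothesis to $E \setminus I_0$ with the sets $J_e \setminus F_0$, after checking that for every $I \subset E \setminus I_0$,
$$\#\!\left(\bigcup_{e \in I} J_e \setminus F_0\right) = \#\bigcup_{e \in I \cup I_0} J_e - \#F_0 \geq \#(I \cup I_0) - \#I_0 = \#I,$$
where the equality uses $F_0 \subset \bigcup_{e \in I \cup I_0} J_e$ and the inequality is Hall's condition for $I \cup I_0$. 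Gluing $\psi_0$ with the resulting selector on $E \setminus I_0$, whose range is disjoint from $F_0$, produces the required injection on $E$.

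The only delicate point is this case analysis: one must verify that \emph{whichever} alternative occurs, Hall's condition is inherited by the sub-instance(s) handed to the induction hypothesis, and in Case 2 that the two partial selectors have disjoint ranges so that they assemble into a single injective map. Everything else — the base case, the necessity direction, and the elementary set-counting identity in the displayed inequality — is routine bookkeeping.
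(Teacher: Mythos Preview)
Your proof is correct; this is the classical inductive argument for Hall's theorem, with the standard dichotomy between the ``slack'' case (every proper nonempty subset has strict inequality) and the ``tight'' case (some proper nonempty subset achieves equality), and the verifications you give in each case are accurate. Note, however, that the paper does not supply its own proof of this lemma: it is stated with a citation to the literature and used as a black box later in the proof of Lemma~\ref{lem:unique_weak_pinsker_bernoulli}, so there is no argument in the paper to compare against.
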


The main way in which the entropy of the processes is used in our arguments comes from the Shannon-McMillan-Breiman Theorem (see \cite[Theorem 13.1]{book_billingsley}):
\begin{thm} \label{thm:SMB}
Let $\bfX := (X, \A, \mu, T)$ be an ergodic dynamical system and $\xi_0: X \arr A$. For $\bf{a} \in A^{[0, n[}$, define
$$p_n(\bf{a}) := \mu(\xi_{[0, n[} = \bf{a}).$$
We have
$$\lim_{n \arr \infty} - \frac{1}{n} \log(p_n(\xi_{[0, n[})) = h_\mu(\xi, T), \; \mu \text{-almost surely}.$$
In particular, we also have the convergence in probability: for every $\eps > 0$, there exists $N \geq 1$ such that for every $n \geq N$, there exists a set $\calA_n \subset A^{[0, n[}$ such that $\mu(\xi_{[0, n[} \in \calA_n)\geq 1-\eps$ and for every $\bf{a} \in \calA_n$, 
$$2^{-(h_\mu(\xi, T) + \eps)n} \leq \mu(\xi_{[0, n[} = \bf{a}) \leq 2^{-(h_\mu(\xi, T) -\eps)n}.$$
\end{thm}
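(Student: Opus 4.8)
This is the classical Shannon--McMillan--Breiman theorem, so the plan is to reproduce Breiman's argument: derive the almost sure convergence from Birkhoff's ergodic theorem together with the martingale convergence theorem and a maximal inequality, and then read off the ``in particular'' part from convergence in probability. The first step is to rewrite $-\log p_n(\xi_{[0,n[})$ as an (almost) Birkhoff sum. Put $\calF_k := \s(\xi_{[-k,0[})$ for $k \geq 1$, $\calF_0 := \{\varnothing, X\}$, and set
$$I_k(x) := -\log \mu\big(\xi_0 = \xi_0(x)\ \big|\ \calF_k\big)(x), \qquad I_\infty(x) := -\log \mu\big(\xi_0 = \xi_0(x)\ \big|\ \textstyle\bigvee_{k\geq 1}\calF_k\big)(x).$$
Expanding $\mu(\xi_{[0,n[} = \mathbf{a})$ by the chain rule for conditional probabilities and using that $T$ preserves $\mu$, one obtains the exact identity $-\log p_n(\xi_{[0,n[}) = \sum_{k=0}^{n-1} I_k \circ T^k$, so everything reduces to understanding the non-stationary averages $\frac1n \sum_{k=0}^{n-1} I_k \circ T^k$.

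Next I would gather the properties of the $I_k$. For each letter $a \in A$, $\big(\bbE[\mathbbm{1}_{\{\xi_0 = a\}} \mid \calF_k]\big)_k$ is a bounded martingale, so by the martingale convergence theorem $I_k \arr I_\infty$ almost surely and in $L^1$. The crucial quantitative point is that $I^* := \sup_{k \geq 0} I_k$ is integrable: Doob's maximal inequality, via a stopping-time argument, gives $\mu\big(\{\xi_0 = a\} \cap \{\sup_k I_k > \lambda\}\big) \leq e^{-\lambda}$ for all $a$ and all $\lambda > 0$, and integrating in $\lambda$ yields $\bbE[I^*] \leq H_\mu(\xi_0) + 1 < \infty$. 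I would also record the standard identity $\bbE[I_\infty] = \lim_k H_\mu(\xi_0 \mid \xi_{[-k,0[}) = h_\mu(\xi, T)$, which expresses the KS-entropy as the conditional entropy of one coordinate given the past.

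Then comes the passage to the limit. Write $\frac1n\sum_{k=0}^{n-1} I_k \circ T^k = \frac1n\sum_{k=0}^{n-1} I_\infty \circ T^k + \frac1n\sum_{k=0}^{n-1}(I_k - I_\infty)\circ T^k$. By Birkhoff's theorem and ergodicity the first average converges almost surely to $\bbE[I_\infty] = h_\mu(\xi, T)$. For the second, fix $N$, set $F_N := \sup_{k \geq N}|I_k - I_\infty|$ (so $0 \leq F_N \leq 2 I^* \in L^1$), bound the summands with $k \geq N$ by $F_N \circ T^k$, and note that the finitely many terms with $k < N$ contribute $o(n)$; applying Birkhoff to $F_N$ then gives $\limsup_n$ of the second average $\leq \bbE[F_N]$ almost surely. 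Since $F_N \arr 0$ almost surely and is dominated by the integrable $2I^*$, dominated convergence gives $\bbE[F_N] \arr 0$ as $N \arr \infty$, so the second average vanishes and $-\frac1n\log p_n(\xi_{[0,n[}) \arr h_\mu(\xi, T)$ almost surely.

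Finally, almost sure convergence implies convergence in probability, so for each $\eps > 0$ there is $N$ with $\mu\big(\big|{-}\tfrac1n\log p_n(\xi_{[0,n[}) - h_\mu(\xi, T)\big| \leq \eps\big) \geq 1-\eps$ whenever $n \geq N$; letting $\calA_n$ be the set of words $\mathbf{a} \in A^{[0,n[}$ for which $\big|{-}\tfrac1n\log\mu(\xi_{[0,n[} = \mathbf{a}) - h_\mu(\xi,T)\big| \leq \eps$ gives exactly the claimed two-sided bound (with $\log$ in base $2$). I expect the main obstacle to be the maximal inequality of the second step ensuring $I^* \in L^1$: it is precisely this integrable dominating function that allows one to replace the genuinely non-stationary average $\frac1n\sum I_k \circ T^k$ by the Birkhoff average of $I_\infty$, and it is this ingredient that upgrades the elementary $L^1$ statement (McMillan) to the almost sure one (Breiman).
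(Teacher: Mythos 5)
Your proof is correct, and it reproduces exactly the standard Breiman argument — the chain-rule identity $-\log p_n(\xi_{[0,n[}) = \sum_{k=0}^{n-1} I_k \circ T^k$, the martingale maximal inequality $\mu(\{\xi_0 = a\} \cap \{\sup_k I_k > \lambda\}) \leq e^{-\lambda}$ giving $\bbE[\sup_k I_k] \leq H_\mu(\xi_0) + 1 < \infty$, and the two-term splitting $I_k = I_\infty + (I_k - I_\infty)$ handled by Birkhoff and dominated convergence, respectively. The paper does not supply its own proof of this statement (it simply cites Billingsley, Theorem 13.1), and the argument you give is precisely the one found in that reference, so this is essentially the same approach.
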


We also need to introduce another tool that is commonly used in Ornstein's theory: Rokhlin towers. On a dynamical system $\bfX := (X, \A, \mu, T)$, to get a tower of height $n$, we need a set $F$ such that the sets $T^jF$, for $0 \leq j \leq n-1$ are disjoint. Then the family $\calT := (F, TF, ..., T^{n-1}F)$ is what we call a Rokhlin tower, or, in short, a \emph{tower}. However, we will also refer to the set $\bigsqcup_{j=0}^{n-1} T^jF$ as a tower. In particular, many times, we will write $\mu(\calT)$ for $\mu(\bigsqcup_{j=0}^{n-1} T^jF)$. The following result guaranties that Rokhlin of arbitrary height and total measure almost $1$ exist under quite general conditions:
\begin{prop}[See \cite{theory_of_bernoulli_shifts}] \label{thm:rokhlin}
Let $\bfX := (X, \A, \mu, T)$ be an ergodic dynamical system and $\xi_0$ a finite valued random variable. Assume that $\mu$ is non-atomic. For all $n \geq 1$ and $\eps > 0$, there exists a measurable set $F \subset X$ such that the sets $T^jF$, for $j \in [0, n[$, are disjoint, $\mu(\bigcup_{j=0}^{n-1} T^jF) \geq 1- \eps$ and $\calL(\xi_0 \, | \, F) = \calL(\xi_0)$. 
\end{prop}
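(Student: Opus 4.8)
The plan is to deduce this strengthened Rokhlin lemma from the ordinary (Rokhlin--Halmos) lemma in two moves: first produce an auxiliary Rokhlin tower of height $n$ whose base already carries \emph{approximately} the law $\calL(\xi_0)$, then shave off a tiny subset of that base to make the law exactly right. The ordinary Rokhlin lemma alone gives no control on the distribution of $\xi_0$ along the base of a height-$n$ tower (the base has measure about $1/n$ and could sit anywhere); the extra ingredient is a redistribution trick carried out inside a very tall tower, whose \emph{whole} mass --- unlike the base of a short tower --- can be made to exhaust almost all of $X$.

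For the first move, fix $\delta>0$ small and $p\in\bbN$ large (both to be chosen at the end), and apply the ordinary Rokhlin lemma to get a set $E$ with $E,TE,\dots,T^{pn-1}E$ pairwise disjoint and $\mu\big(\bigsqcup_{j<pn}T^jE\big)\ge1-\delta$. Partition $E$ into its column-name pieces $E=\bigsqcup_{w\in A^{pn}}E_w$, so that $\xi_0$ is the constant $w_j$ on the level-$j$ slice $T^jE_w$ of the column over $E_w$; using non-atomicity of $\mu$, split each $E_w$ into $n$ pieces $(E_w)_0,\dots,(E_w)_{n-1}$ of equal measure, and set
$$F_0:=\bigsqcup_{w\in A^{pn}}\ \bigsqcup_{r=0}^{n-1}\ \bigsqcup_{i=0}^{p-2}T^{\,r+in}\big((E_w)_r\big).$$
In words: inside each column one keeps every $n$-th level, but the phase $r$ of this arithmetic progression of levels is spread uniformly over $\{0,\dots,n-1\}$ by the equal splitting of the column base. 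A direct check shows $F_0,TF_0,\dots,T^{n-1}F_0$ are pairwise disjoint --- inside a fixed column, at a level $\ell$ the translates $T^aF_0$ and $T^bF_0$ (with $0\le a<b<n$) are carried by the two disjoint pieces $(E_w)_{(\ell-a)\bmod n}$ and $(E_w)_{(\ell-b)\bmod n}$ --- and $\mu(F_0)=(p-1)\mu(E)$, whence $\mu\big(\bigsqcup_{j<n}T^jF_0\big)=\tfrac{p-1}{p}\,\mu\big(\bigsqcup_{j<pn}T^jE\big)\ge1-\delta-\tfrac1p$, which is $\ge1-\eps/2$ once $\delta+1/p\le\eps/2$.

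The purpose of the uniform splitting is that it forces $\calL(\xi_0\,|\,F_0)=\calL(\xi_0\,|\,G')$, where $G':=\bigsqcup_{j<(p-1)n}T^jE$ is the honest Rokhlin tower of height $(p-1)n$ over $E$: summing $\mathbbm{1}[\,w_{r+in}=a\,]$ over all admissible $r$ and $i$ just counts the occurrences of $a$ in $w_0\cdots w_{(p-1)n-1}$, so $\mu(F_0\cap\{\xi_0=a\})=\tfrac1n\,\mu(G'\cap\{\xi_0=a\})$ while $\mu(F_0)=\tfrac1n\,\mu(G')$. Since $G'$ is the tall tower minus its top $n$ levels, $\mu(X\setminus G')\le\delta+\tfrac1p$; and conditioning on a set of measure $\ge1-\kappa$ changes any law by at most $\tfrac{\kappa}{1-\kappa}$ in total variation, so $\big\|\calL(\xi_0\,|\,F_0)-\calL(\xi_0)\big\|\le\tfrac{\delta+1/p}{1-\delta-1/p}$ --- no ergodic theorem is needed, only that the tall tower exhausts most of $X$. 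Choosing $\delta$ small and $p$ large, we thus have a height-$n$ Rokhlin tower base $F_0$ with $\mu\big(\bigsqcup_{j<n}T^jF_0\big)\ge1-\eps/2$ and $\|\calL(\xi_0\,|\,F_0)-\calL(\xi_0)\|$ as small as we wish. For the second move, write $\calL(\xi_0\,|\,F_0)(a)=\mu(\xi_0=a)+\eta(a)$ with $\sum_a\eta(a)=0$ and $\max_a|\eta(a)|$ tiny; fix one small $\rho>0$ with $\mu(F_0)\eta(a)+\mu(\xi_0=a)\rho\ge0$ for all $a$ and $n\rho<\eps/2$ (possible once $\max_a|\eta(a)|$ is small relative to $\min\{\mu(\xi_0=a):\mu(\xi_0=a)>0\}$) and, using non-atomicity again, carve out of each $F_0\cap\{\xi_0=a\}$ a subset $R_a$ of measure exactly $\mu(F_0)\eta(a)+\mu(\xi_0=a)\rho$ --- these sum to $\rho$ and each is $\le\mu(F_0\cap\{\xi_0=a\})$, so the choice is consistent. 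Then $F:=F_0\setminus\bigsqcup_aR_a$ satisfies $F\subset F_0$, so $F,TF,\dots,T^{n-1}F$ remain pairwise disjoint; $\mu\big(\bigsqcup_{j<n}T^jF\big)\ge\mu\big(\bigsqcup_{j<n}T^jF_0\big)-n\rho\ge1-\eps$; and a one-line computation gives $\calL(\xi_0\,|\,F)(a)=\mu(\xi_0=a)$ for every $a$. This $F$ has the three required properties.

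I expect the first move to be the main obstacle: the redistribution of the base of a tall tower must be arranged so that the result is still the base of a genuine height-$n$ Rokhlin tower covering all but $\eps$ of $X$ while forcing the conditional law along it to be controlled, and one has to accept that the combinatorial choice of phases can only reach $\calL(\xi_0)$ approximately --- exactness being recovered afterwards by the (easy) shaving of the second move.
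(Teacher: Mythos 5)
The paper states this proposition with a bare citation to \cite{theory_of_bernoulli_shifts} and gives no proof of its own, so there is nothing to match against line by line; your argument stands on its own and is correct. It is in fact the standard textbook route: apply Rokhlin to get a very tall tower of height $pn$, equidistribute each column base over the $n$ residue classes of levels (the bijection $(r,i)\mapsto r+in$ between $\{0,\dots,n-1\}\times\{0,\dots,p-2\}$ and $\{0,\dots,(p-1)n-1\}$ is exactly what forces $\calL(\xi_0\,|\,F_0)=\calL(\xi_0\,|\,G')$), note that $G'$ exhausts all but $\delta+1/p$ of $X$ so the conditional law is automatically close to $\calL(\xi_0)$ without invoking the ergodic theorem, and finish by shaving a $\xi_0$-measurable set of total measure $\rho$ off the base to make the conditional law exact. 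The disjointness check, the mass counts, the constraint $\rho\le\mu(F_0)$ guaranteeing $\mu(R_a)\le\mu(F_0\cap\{\xi_0=a\})$, and the final computation $\calL(\xi_0\,|\,F)=\calL(\xi_0)$ are all sound.
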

\noindent The set $F$ is called the base of the tower $\calT$ and the sets $T^jF$ are the levels. For any set $E \subset F$, the family
$$C_E := \{T^jE\}_{0 \leq j \leq n-1}$$
is a tower, and we say that it is a column of $\calT$. If $\xi_0: X \arr A$ is a random variable, we will be interested in the columns defined by sets of the form $F_{\bf{a}} := F \cap \{\xi_{[0, n[}= \bf{a}\}$ with $\bf{a} \in A^{[0, n[}$. We say that $\bf{a}$ is the $\xi$-name of the column $C_{\bf{a}} := C_{F_{\bf{a}}}$. The columns $\{C_{\bf{a}}\}_{\bf{a} \in A^{[0, n[}}$ give a partition of the levels of $\calT$. Now, conversely, assume that we have a partition of $F$ given by sets $E_1, ..., E_p$, then the columns $C_{E_1}, ..., C_{E_p}$ give a partition of the levels of $\calT$. If, moreover, we associate to each column $C_{E_i}$ a name $\bf{a}^{(i)} \in A^{[0, n[}$ of length $n$, we can define a random variable $\xi_0$ on the levels of $\calT$ so that, for every $i$, we have $C_{E_i} = C_{\bf{a}^{(i)}}$. We obtain this random variable simply by setting, for $i \in \llbracket 1, p \rrbracket, j \in \llbracket 0, n \llbracket$
$$\xi_0 = \bf{a}_j^{(i)} \; \text{ on } \; T^jE_i.$$
This is the framework we will use to construct our random variables. We are now ready to turn our attention to the following:

\begin{lem} \label{lem:unique_weak_pinsker_bernoulli}
Let $\bfX := (X, \A, \mu, T)$ be a Bernoulli system of finite entropy and $\calP_0: X \arr A$ a finite generator of $\A$. For every $\eps > 0$, there exists $\delta > 0$ satisfying the following:
\begin{itemize}
\item if $\calH_0 : X \arr H$ is a finite valued random variable such that $h_\mu(\calH, T) \leq \delta$,
\item and $\xi := (\xi_0 \circ T^n)_{n \in \bbZ}$ is a $B$-valued (for some finite set $B$) i.i.d. process independent from $\calH$ such that $\A = \s(\calH) \vee \s(\xi)$ mod $\mu$,
\end{itemize}
then for any $\a >0$, there exists a process $\tilde\xi$ such that
\begin{enumerate} [label = (\roman*)]
\item $\bar{d}_1(\calL(\xi_0), \calL(\tilde\xi_0)) \leq \a$,
\item $0 \leq h_\mu(\calH \vee \xi, T) - h_\mu(\calH \vee \tilde \xi, T) \leq \a$,
\item and $\calP_0 \preceq_{\eps} \s(\tilde\xi)$.
\end{enumerate}
\end{lem}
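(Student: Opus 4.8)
The plan is to build $\tilde\xi$ by a Rokhlin-tower coding argument in the style of Ornstein theory, using extremality of $\calP$ (valid since $\bfX$ is Bernoulli) to control the $\bar d_1$-distance and entropy loss while forcing $\calP_0$ to become almost $\s(\tilde\xi)$-measurable. First I would fix $\eps>0$ and apply extremality of the process $\calP$: this produces $\delta_0>0$ and $N_0\in\bbN$ so that for $\ell\geq N_0$ and any random variable $\Q:X\arr B$ with $\#B\leq 2^{\delta_0\ell}$, outside a set of measure $\eps$ the conditional laws of $\calP_{[0,\ell[}$ given $\Q$ are within $\eps$ (in $\bar d_\ell$) of the unconditional law $\nu_\ell$. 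I would set $\delta$ (the output of the lemma) to be a small multiple of $\delta_0$, chosen also small enough that the Shannon--McMillan--Breiman theorem (Theorem \ref{thm:SMB}) applied to $\calH$ gives at most $2^{\delta\ell}$ ``typical'' $\calH$-names of length $\ell$, up to measure $\eps$.

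The construction: choose $\ell$ large (larger than $N_0$ and large enough for all the SMB/ergodic-average approximations below), and use Proposition \ref{thm:rokhlin} to obtain a Rokhlin tower $\calT$ of height $\ell$ with $\mu(\calT)\geq 1-\eps$ on which $\calL((\calH_0,\xi_0,\calP_0)\,|\,F)=\calL(\calH_0,\xi_0,\calP_0)$. Partition the base $F$ by the $(\calH,\xi)$-names of length $\ell$; since $\A=\s(\calH)\vee\s(\xi)$, the $\calP$-name of length $\ell$ is (up to a small error, using generation) essentially a function of the $(\calH,\xi)$-name. Now I would re-code $\xi$ on the tower: on each column I replace the $\xi$-name by a new $B'$-valued name (with $B'$ only slightly larger than $B$, so the alphabet barely changes — this is where $\bar d_1$-closeness (i) will come from, by redefining $\xi_0$ on a small proportion of levels). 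The point of the recoding is to append to the $\xi$-name a ``pointer'' that records which $\calH$-typical-name the column carries; because there are at most $2^{\delta\ell}$ such names and $\delta$ is tiny, this pointer costs at most $\delta\ell$ bits of entropy and changes $\xi_0$ on at most $O(\delta)$-fraction of the levels, giving (i) and the entropy bound (ii) (entropy can only decrease here since $\tilde\xi$ is a coding of $(\calH,\xi)$-names, and the decrease is at most the proportion of altered coordinates times $\log\#B'$, controlled by $\alpha$). Then, knowing the $\calH$-name and the new $\xi$-name together determines (the $\calH$- and $\xi$-names, hence) essentially the $\calP$-name; but now I feed this ``$\calH$-name plus new $\xi$-name'' as the random variable $\Q$ into extremality — so on most columns the conditional distribution of $\calP_{[0,\ell[}$ is $\bar d_\ell$-close to $\nu_\ell$, and by Hall's marriage lemma (Lemma \ref{lem:hall}) I can match $\calP$-names across such columns to a common ``master'' list, allowing me to further adjust $\tilde\xi$ (again on a small proportion of coordinates) so that the $\tilde\xi$-name \emph{determines} the $\calP$-name on all but an $\eps$-fraction of the tower. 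That yields a $\s(\tilde\xi)$-measurable $\Q_0$ with $\mu(\calP_0\neq\Q_0)\leq\eps$, i.e.\ (iii).

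The main obstacle I expect is reconciling (iii) with (ii): forcing $\calP_0$ to be (almost) $\s(\tilde\xi)$-measurable wants to \emph{inject} information into $\tilde\xi$ (raising its entropy and moving $\bar d_1$), while (ii) forbids raising the entropy and (i) forbids moving $\bar d_1$ much — so the whole scheme only works because $h_\mu(\calH,T)$ is small: the information about $\calP$ that is not already carried by $\xi$ is exactly the information carried by $\calH$, and that is at most $\delta$ per symbol. Making this quantitative — i.e.\ showing the pointer and the Hall-matching adjustments together perturb only an $O(\delta)+\eps$-fraction of coordinates, and organizing the order of the two recodings so that the second does not destroy the typicality used by the first — is the delicate bookkeeping at the heart of the argument. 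A secondary technical point is passing between $\bar d_\ell$-closeness of $\ell$-block distributions (what extremality gives) and $\bar d_1$-closeness of the single-symbol marginals (what (i) asks): this is handled by the standard observation that a small-$\bar d_\ell$ coupling can be realized by changing a small proportion of coordinates of the tower-coded process, hence changes $\calL(\xi_0)$ by a small amount in $\bar d_1$.
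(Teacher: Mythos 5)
Your toolbox is right (Rokhlin tower, extremality of $\calP$, Hall's marriage lemma), but the mechanism you describe is substantially different from the paper's and has genuine gaps, primarily because of the quantifier order: $\delta$ is chosen as a function of $\eps$ alone, and only afterwards is $\alpha$ given, which may be far smaller than $\delta$. Your ``pointer'' construction encodes the $\calH$-name into $\tilde\xi$ by perturbing an $O(\delta)$-fraction of coordinates; this would produce a $\bar d_1$-distance and an entropy drop of order $\delta\log\#B'$, which cannot be bounded by an arbitrary $\alpha<\delta$. The paper avoids this completely: it never encodes $\calH$ into $\tilde\xi$. Instead, it fixes a reference column $C_{\bf{h}_0}$ and re-maps, via Hall's lemma, each typical $\xi$-name in each column $C_{\bf{h}}$ to another typical $\xi$-name whose $\calP$-decoding \emph{read off in the fixed model column $C_{\bf{h}_0}$} is $d_\ell$-close to the actual $\calP$-name of $C_{\bf{h}}^{\bf{b}}$. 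Extremality is what makes such a matching possible, because it guarantees that the conditional law of $\calP_{[0,n[}$ given the $\calH$-name is $\bar d_n$-close to its law given $\bf{h}_0$, for most $\bf{h}$.

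Two further points in your sketch do not work as written. First, you feed ``$\calH$-name plus new $\xi$-name'' into extremality as the variable $\Q$, but the extremality hypothesis requires $\#\mathrm{range}(\Q)\leq 2^{\delta\ell}$; including the $\xi$-name makes $\#\Q$ of order $(\#B)^\ell$, which is far too large. The paper applies extremality only to the $\calH$-name, whose number of typical values is controlled by $h_\mu(\calH,T)\leq\delta$ via Shannon--McMillan--Breiman. Second, you obtain (i) by ``redefining $\xi_0$ on a small proportion of levels'', i.e.\ by making $\mu(\xi_0\neq\tilde\xi_0)$ small. In the paper $\mu(\xi_0\neq\tilde\xi_0)$ may be large: the new name can be a completely different word. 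Item (i) is instead achieved because old and new names are both drawn from the typical set $\calB_n$ and hence have nearly identical letter frequencies, so the single-symbol marginals are close in total variation. The entropy loss (ii) is controlled not by the $\calH$-encoding cost but by a separate truncation parameter $\gamma$ (discard the last $\gamma n$ coordinates before applying Hall, so the map $\psi_{\bf{h}}$ can be injective on truncated names), and $\gamma$ is chosen \emph{after} $\alpha$. This decoupling of ``which parameter absorbs which error'' from the size of $\delta$ is exactly what makes the quantifier structure work, and it is the piece your proposal is missing.
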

The proof of the lemma being quite intricate, we start by giving a sketch of the proof. First, we will need a Rokhlin tower $\calT_n$ of very large height $n$. This tower is then divided into the columns $C_{\bf{h}}$ (see \eqref{eq:def_H_column}) generated by $\calH$. Each of those columns is then divided into sub-columns $C_{\bf{h}}^{\bf{b}}$ (see \eqref{eq:def_xi_column}) generated by $\xi$. Because $\calH \vee \xi$ generates $\A$, we can approach $\calP_0$ by some random variable $\tilde\calP_0$ depending on finitely many coordinates of $\calH \vee \xi$. It enables us to associate to each $C_{\bf{h}}^{\bf{b}}$ a word $\tilde\calP_{[0, n[}(\bf{h}, \bf{b})$ which gives a good approximation of $\calP_0$ on the levels of $C_{\bf{h}}^{\bf{b}}$. We will define $\tilde\xi_0$ by giving $C_{\bf{h}}^{\bf{b}}$ a new $\tilde\xi$-name, to replace $\bf{b}$. Our goal is to choose those names so that we can get a good approximation of $\tilde\calP_{[0, n[}(\bf{h}, \bf{b})$ by simply knowing the $\tilde\xi$-name of $C_{\bf{h}}^{\bf{b}}$, regardless of $\bf{h}$. To do that, we fix a column $C_{\bf{h}_0}$ and use it as a ``model'' for the other columns. Then the extremality of $\calP$ comes into play: it tells us, for most choices of $\bf{h}$, the families $\{\tilde\calP_{[0, n[}(\bf{h}_0, \bf{b})\}_{\bf{b} \in \calB_n}$ and $\{\tilde\calP_{[0, n[}(\bf{h}, \bf{b})\}_{\bf{b} \in \calB_n}$ are quite similar. More specifically, we show that, for most $\bf{b}$, there are names $\tilde{\bf{b}}$ such that ${d_n(\tilde\calP_{[0, n[}(\bf{h}_0, \tilde{\bf{b}}), \tilde\calP_{[0, n[}(\bf{h}, \bf{b}))}$ is small. Those names are then suitable $\tilde\xi$-names for $C^{\bf{b}}_{\bf{h}}$. However, when we choose among those suitable names, we need to make sure that we are not giving the same name to too many columns, otherwise we might loose to much information, and we could not get (ii). This is done using Hall's marriage lemma.

\begin{proof}[Proof of Lemma \ref{lem:unique_weak_pinsker_bernoulli}]
In this proof, we use many parameters, which we introduce below in a specific order to highlight the way they depend on each other:
\begin{enumerate} [label = (\alph*)]
\item \label{a} Let $\eps > 0$. This parameter is chosen first, as it appears in the statement of the lemma. Then we choose $\delta > 0$ and $N \geq 1$, as the numbers associated to $\eps^3/4$ in the definition of extremality of $\calP$. We assume that $h_\mu(\calH, T) < \delta$.
\item \label{b} Let $\alpha > 0$. This is another arbitrarily small parameter that appears in the statement of the lemma. It does not depend on $\eps$ nor $\delta$. 
\item \label{c} Next, we introduce $0 < \g < 1$, which must be small relative to $\a$ and $\eps$ for (ii) and (iii) to hold. Specifically, we require that $\g \leq  \eps/2$, and that the bound in Lemma \ref{lem:continuity_entropy} holds with error $\alpha$, whenever $\mu(\xi_0 \neq \zeta_0) \leq 2\g$, for any random variables $\xi_0$ and $\zeta_0$.
\item \label{d} Then we take $\beta > 0$, which is our most used parameter. We set $\beta$ satisfying the following:
$$\beta < \left\{ 
\begin{array}{l}
	\delta - h_\mu(\calH, T); \\
	\min(\eps^3/24, \eps/4); \\
	\min(\g, \g h_\mu(\xi, T)/5); \\
	\a/7.
\end{array}
\right.$$
Once $\beta$ is fixed, we choose $n_0 \geq 1$ such that $\calP_0 \preceq_{\beta^2/2} (\calH \vee \xi)_{[-n_0, n_0]}$.
\item \label{e} Finally, we choose an integer $n$, which will be the height of the Rokhlin tower. It is chosen larger than $N$. We also need it to be large enough for us to apply the Shannon-McMillan-Breiman theorem, as well as Birkhoff's ergodic theorem. As $n$ appears in many estimates, it needs to be large enough depending on $\eps$, $\delta$, $\g$, $\beta$ and $n_0$. It would be quite tedious to give an explicit lower bound for $n$, so, since all other parameters are now fixed and do not depend on $n$, we simply point out throughout the proof the estimates where $n$ needs to be large. 
\end{enumerate}
Having now established the parameters, we begin the proof.

\subsubsection*{Step 1: The setup of the tower}
As mentioned in \ref{e}, we choose $n$ so that we can apply the Shannon-McMillan-Breiman theorem (i.e. Theorem \ref{thm:SMB}) and 
Birkhoff's ergodic theorem to know that there exist two sets $\calE_n^0 \subset H^{[0, n[}$ and $\calB_n^0 \subset B^{[0, n[}$ such that
\begin{eq} 
\begin{gathered}
\mu(\calH_{[0, n[} \in \calE_n^0) \geq 1-\beta/2, \; \text{ and } \; \mu(\xi_{[0, n[} \in \calB_n^0) \geq 1- \beta/3,
\end{gathered}
\end{eq}
on which the estimates \eqref{eq:estimate_H}, \eqref{eq:estimate_Hvxi}, \eqref{eq:law_ergo_thm} and \eqref{eq:estimate_tronc_xi} hold. Latter in the proof, we will see that we can take $\calE_n \subset \calE_n^0$ and $\calB_n \subset \calB_n^0$ subsets such that
\begin{eq} \label{eq:sets_good_estimates}
\begin{gathered}
\mu(\calH_{[0, n[} \in \calE_n) \geq 1-\beta, \; \text{ and } \; \mu(\xi_{[0, n[} \in \calB_n) \geq 1- \beta,
\end{gathered}
\end{eq}
on which we also have \eqref{eq:approx_calP} and \eqref{eq:tronc_column_full}. The fact that \eqref{eq:approx_calP} holds for $\calE_n$ appears in Step 2 and the fact that \eqref{eq:tronc_column_full} holds for $\calB_n$ appears in Step 3. Until then, we only use \eqref{eq:sets_good_estimates}. For now, we present some of the estimates we have announced. 

The first estimates given by the Shannon-McMillan-Breiman theorem are:
\begin{align}
&\forall \bf{h} \in \calE_n, \; 2^{-(h_\mu(\calH, T) + \beta)n} \leq \mu(\calH_{[0, n[} = \bf{h}) \leq 2^{-(h_\mu(\calH, T) - \beta)n}, \label{eq:estimate_H}\\
&\forall \bf{b} \in \calB_n, \; 2^{-(h_\mu(\xi, T) + \beta)n} \leq \mu(\xi_{[0, n[} = \bf{b}) \leq 2^{-(h_\mu(\xi, T) - \beta)n}. \label{eq:estimate_Hvxi}
\end{align}

For any sequence $\bf{b} \in B^{[0, n[}$ and any element $b' \in B$, denote $f_n(\bf{b}, b')$ the frequency at which the element $b'$ appears in the sequence $\bf{b}$. This can  also be defined as follows:
\begin{eq} \label{eq:def_frequency}
\forall x \in \{\xi_{[0, n[} = \bf{b}\}, \; f_n(\bf{b}, b') := \frac{1}{n } \sum_{j = 0}^{n-1}\mathbbm{1}_{\{\xi_{0} = b'\}}(T^jx).
\end{eq}
From this definition of $f_n$, it becomes clear that, as announced earlier, the estimate given by Birkhoff's ergodic theorem is:
\begin{eq} \label{eq:law_ergo_thm}
\sum_{b' \in B} |f_n(\bf{b}, b') - \mu(\xi_{0} = b')| \leq \beta.
\end{eq}
Since $\calH \vee \xi$ generates $\A$, as said in \ref{d}, we can find $n_0 \geq 1$ so that $\calP_0 \preceq_{\beta^2} (\calH \vee \xi)_{[-n_0, n_0]}$. This means that there exists a $(\calH \vee \xi)_{[-n_0, n_0]}$-measurable random variable $\tilde\calP_0$ such that $\mu(\tilde\calP_0 \neq \calP_0) \leq \beta^2$. 

By making use of Proposition \ref{thm:rokhlin}, we can build a set $G$ such that $F' := TG$ is disjoint from $G$ and $F'$ is the base of a tower $\calG_n := \{T^jF'\}_{0 \leq j \leq n-1}$ such that $\mu(\calG_n) \geq 1 - \beta$ and 
\begin{eq} \label{eq:indep_base_tower}
\calL((\calH \vee \tilde\calP \vee \xi)_{[0, n[} \, | \, F') =  \calL((\calH \vee \tilde\calP \vee \xi)_{[0, n[}).
\end{eq}
The set $G$ will be useful later to code the entrance of the tower. We slightly reduce the tower by setting $F := F' \cap \{ \calH_{[0, n[} \in \calE_n\} \cap \{\xi_{[0, n[} \in \calB_n\}$ and $\calT_n := \{T^jF\}_{0 \leq j \leq n-1}$. One can then use \eqref{eq:indep_base_tower} with our previous estimates to see that $\mu(\calT_n) \geq 1-3\beta$ (by making sure that $1/n \leq \beta$).


We then split $\calT_n$ into $\calH$-columns: for $\bf{h} \in {\calE}_n$, we define 
\begin{eq} \label{eq:def_H_column}
C_{\bf{h}} := \{T^j(F \cap \{\calH_{[0, n[} = \bf{h}\})\}_{0 \leq j \leq n-1},
\end{eq}
so that $\calT_n = \bigsqcup_{\bf{h} \in {\calE}_n} C_{\bf{h}}$ (we mean that the levels of $\calT_n$ are disjoint unions of the levels of $C_{\bf{h}}$). For each $\bf{h} \in \calE_n$, we say that $C_{\bf{h}}$ is the column of $\calH_{[0, n[}$-name $\bf{h}$. We also denote by $F_{\bf{h}} := F \cap \{\calH_{[0, n[} = \bf{h}\}$ the base of $C_{\bf{h}}$.

\subsubsection*{Step 2: Using the extremality of $\calP$}
We plan on modifying $\xi$ into a process $\tilde\xi$ so that the joint law of $\calP \vee \tilde\xi$ is almost the same in most of the columns $\{C_{\bf{h}}\}_{\bf{h} \in \calE_n}$. We start by using the fact that $\bfX$ is Bernoulli to see that the law of $\calP$ is almost the same on each column $C_{\bf{h}}$. Indeed, since $\bfX$ is Bernoulli, $\calP$ is extremal, and we fixed $\delta > 0$ and $N \geq 1$ as the numbers associated to $\eps^3/4$ in the definition of extremality and assume that $h_\mu(\calH, T) < \delta$ (see \ref{a}). On the other hand, from \eqref{eq:estimate_H}, we deduce that 
$$\# \calE_n \leq 2^{(h_\mu(\calH, T) + \beta)n}.$$
Next we define the partition
$$\Q := \left\{
\begin{array}{ll}
* & \text{on } \{\calH_{[0, n[} \notin \calE_n\} \cup \{\xi_{[0, n[} \notin \calB_n\}\\
\calH_{[0, n[}  & \text{on } \{\calH_{[0, n[} \in \calE_n\} \cap \{ \xi_{[0, n[} \in \calB_n\}\\
\end{array}.
\right.$$
In particular, we know that $\mu(\Q = *) \leq 2 \beta$. Moreover, the number of values taken by $\Q$ is bounded by 
$$\# \calE_n + 1 \leq 2^{(h_\mu(\calH, T) + \beta)n} +1 \leq 2^{n\delta},$$
since $\beta < \delta - h_\mu(\calH, T)$. Therefore the extremality of $\calP$ tells us that, since $n \geq N$, there exists a subset $\bar{\calE}_n \subset {\calE}_n$ such that 
\begin{eq} \label{eq:set_extremality}
\mu(\Q \notin (\bar{\calE}_n \cup \{*\})) \leq \eps^3/4 + 2\beta \leq \eps^3 \leq \eps,
\end{eq}
and for $\bf{h} \in \bar{\calE}_n$, we have
$$\bar{d}_n(\calL(\calP_{[0, n[} \, | \, \Q = \bf{h}), \calL(\calP_{[0, n[})) \leq \eps^3/4.$$
As mentioned at the start of the proof, the set $\calE_n$ is chosen so that \eqref{eq:sets_good_estimates} holds and we have
\begin{eq}  \label{eq:approx_calP}
\forall \bf{h} \in \calE_n, \; \mu(\tilde\calP_0 \neq \calP_0 \, | \, \Q = \bf{h}) \leq \beta.
\end{eq}
This is possible because $\mu(\calP_0 \neq \tilde\calP_0) \leq \beta^2/2$.
Therefore, using that $\beta \leq \eps^3/24$ with \eqref{eq:approx_calP}, this yields, for $\bf{h} \in \bar{\calE}_n$:
\begin{eq} \label{eq:good_approx_extremal}
\bar{d}_n(\calL(\tilde\calP_{[0, n[} \, | \, \Q = \bf{h}), \calL(\tilde\calP_{[0, n[})) \leq \eps^3/3.
\end{eq}

\subsubsection*{Step 3: Framework for the construction of $\tilde\xi_0$}
We start the construction of $\tilde\xi$ by setting $\tilde\xi_0 := *$ on $G = T^{-1}F'$, where $*$ represents a symbol that does not belong to $B$. Later in the proof, this will allow us to detect the entrance into $\calT_n$ from the value of the process $\tilde\xi$. Then define $\tilde \xi_0$ to take any value in $B$ on the rest of $\calT_n^c$. For $\bf{h} \in \calE_n \backslash \bar{\calE}_n$, on $C_{\bf{h}}$, we set $\tilde\xi_0 := \xi_0$. We are left with defining our new random variable $\tilde\xi_0$ on the columns $C_{\bf{h}}$, with $\bf{h} \in \bar{\calE}_n$. We start by fixing $\bf{h}_0 \in \bar{\calE}_n$, and the column $C_{\bf{h}_0}$ will serve as a ``model'' for the other columns. 

Next we fix an $\bf{h} \in \bar{\calE}_n$. We define sub-columns of $C_{\bf{h}}$: for $\bf{b} \in \calB_n$, 
\begin{eq} \label{eq:def_xi_column}
C_{\bf{h}}^{\bf{b}} := \{T^j(F \cap \{\calH_{[0, n[} = \bf{h}\} \cap \{\xi_{[0, n[} = \bf{b}\})\}_{0 \leq j \leq n-1}.
\end{eq}
We say that $\bf{b}$ the $\xi$-name of $C_{\bf{h}}^{\bf{b}}$. Because of our definition of $F$ and \eqref{eq:indep_base_tower}, the set $\calB_n$ gives us exactly the $\xi$-names of all the sub-columns in $C_{\bf{h}}$. We will then give each sub-column $C_{\bf{h}}^{\bf{b}}$ a new word $\tilde{\bf{b}} \in \calB_n$ and define the random variable $\tilde\xi_0$ on $C_{\bf{h}}^{\bf{b}}$ as the only variable such that $\tilde{\bf{b}}$ is the $\tilde\xi$-name of $C_{\bf{h}}^{\bf{b}}$. This means that to conclude the construction of $\tilde\xi_0$ on $C_{\bf{h}}$, we simply need to build a map $\phi_{\bf{h}}: \calB_n \to \calB_n,$ and the properties we will obtain on $\tilde \xi$ will follow from our choice for $\phi_{\bf{h}}$. 

In order to give us some additional leeway, we use the parameter $\gamma>0$ introduced at the start of the proof: we define $n_1 := \lfloor (1-\g)n\rfloor \leq n$, and for $\bf{b} \in \calB_n$, we denote by $\bf{b}_{n_1} := \bf{b}_{[0, n_1[} \in B^{[0, n_1[}$ the truncated sub-sequence of $\bf{b}$ of length $n_1$. Conversely, for $\bar{\bf{b}} \in B^{[0, n_1[}$, define 
$$B(\bar{\bf{b}}) := \{\bf{b} \in \calB_n \, | \, \bf{b}_{n_1} = \bar{\bf{b}}\},$$
and 
$$\calB_{n_1} := \{\bar{\bf{b}}  \in B^{[0, n_1[} \, | \, B(\bar{\bf{b}}) \neq \varnothing\}.$$
We will obtain the map $\phi_{\bf{h}}$ by building an injective map $\psi_{\bf{h}}: \calB_{n_1} \to \calB_n$ and setting $\phi_{\bf{h}}(\bf{b}) := \psi_{\bf{h}}(\bf{b}_{n_1})$.  We start be recalling that we chose $n$ and $\calB_n$ so that the estimate given by the Shannon-McMillan-Breiman Theorem, i.e. \eqref{eq:estimate_Hvxi}, still holds when replacing $n$ by $n_1$. More precisely, we mean that, for $\bar{\bf{b}} \in \calB_{n_1}$
\begin{eq} \label{eq:estimate_tronc_xi}
2^{-(h_\mu(\xi, T) + \beta)n_1} \leq \mu(\xi_{[0, n_1[} = \bar{\bf{b}}) \leq 2^{-(h_\mu(\xi, T) - \beta)n_1}.
\end{eq}
Moreover, we stated at the start of the proof that $\calB_n$ is chosen such that, for $\bar{\bf{b}} \in \calB_{n_1}$
\begin{eq} \label{eq:tronc_column_full}
\mu(\xi_{[0, n_1[} = \bar{\bf{b}}, \xi_{[0, n[} \in \calB_n) \geq \frac{1}{2} \mu(\xi_{[0, n_1[} = \bar{\bf{b}}).
\end{eq}
We need to prove that statement. We do this by considering the set 
$$\calC := \{\bar{\bf{b}} \in \calB_{n_1}^0 \, | \, \mu(\xi_{[0, n_1[} = \bar{\bf{b}}, \xi_{[0, n[} \notin \calB_n^0) \geq \frac{1}{2} \mu(\xi_{[0, n_1[} = \bar{\bf{b}})\}.$$
From the definition of $\calC$, we get 
$$\frac{1}{2}\mu(\xi_{[0, n_1[} \in \calC) \leq \mu(\xi_{[0, n[} \notin \calB_n^0) \leq \beta/3.$$
Then, we define $\calB_n$ as $\calB_n := \{\bf{b} \in \calB_n^0 \, | \, \bf{b}_{n_1} \notin \calC\}$, and easily get that $\mu(\xi_{[0, n[} \in \calB_n) \geq 1 - \beta$. Next, because the set we removed from $\calB_n^0$ is measurable with respect to the truncated sequences of length $n_1$, for $\bar{\bf{b}} \notin \calC$, we get 
$$\mu(\xi_{[0, n_1[} = \bar{\bf{b}}, \xi_{[0, n[} \in \calB_n) = \mu(\xi_{[0, n_1[} = \bar{\bf{b}}, \xi_{[0, n[} \in \calB_n^0),$$
so \eqref{eq:tronc_column_full} follows from the definition of $\calC$. 

Finally, putting \eqref{eq:estimate_tronc_xi} and \eqref{eq:tronc_column_full} together, we get, for $\bar{\bf{b}} \in \calB_{n_1}$
\begin{eq} \label{eq:another_estimate}
\mu(\xi_{[0, n_1[} = \bar{\bf{b}}, \xi_{[0, n[} \in \calB_n) \geq \frac{1}{2} 2^{-(h_\mu(\xi, T) + \beta)n_1} \geq 2^{-(h_\mu(\xi, T) + 2\beta)n_1},
\end{eq}
by making sure that $n_1$ large enough. This will enable us to control the measure of the part of the truncated column over $\{\xi_{[0, n_1[} = \bar{\bf{b}}\}$ that is in $\calT_n$.

\subsubsection*{Step 4: Estimates for Hall's marriage lemma}
From \eqref{eq:estimate_Hvxi} and \eqref{eq:estimate_tronc_xi}, we can tell that
\begin{align*}
\#\calB_{n_1} &\leq 2^{(h_\mu(\xi, T) + \beta)n_1} \leq 2^{(h_\mu(\xi, T) + \beta)(1-\g)n} \\
& \leq 2^{(h_\mu(\xi, T) - \g h_\mu(\xi, T) + \beta)n} \leq (1-\beta)2^{(h_\mu(\xi, T) - \beta)n} \leq \#\calB_n,
\end{align*}
since $2 \beta < \g h_\mu(\xi, T)$ and we can choose $n$ large enough. This inequality is also clearly true from the definition of $\calB_{n_1}$, but we include this computation, as a similar one will be essential later in the proof. That being said, this inequality means that it is possible to find an injective map from $\calB_{n_1}$ to $\calB_n$, but we want to be more specific about which injective map we choose. To that end, we will make use of Hall's marriage lemma. To do that, for each $\bar{\bf{b}} \in \calB_{n_1}$, we need to specify which elements of $\calB_n$ we consider suitable $\tilde\xi$-names for the columns $\{C_{\bf{h}}^{\bf{b}} ; \, \bf{b} \in B(\bar{\bf{b}})\}$. 

We recall that $n_0$ is the integer chosen so that $\tilde\calP_0$ is $(\calH \vee \xi)_{[-n_0, n_0]}$-measurable. Define $L_n := [n_0, n_1-n_0[ \subset \bbZ$ and $\ell := n_1-2n_0$ the length of $L_n$. Because $\tilde\calP_0$ is $(\calH \vee \xi)_{[-n_0, n_0]}$-measurable, $\tilde\calP_{L_n}$ is $(\calH \vee \xi)_{[0, n_1[}$-measurable. So, for $\bf{h}$ fixed, for each $\bar{\bf{b}} \in \calB_{n_1}$, on the set $\{\calH_{[0, n[} = \bf{h}, \xi_{[0, n_1[} = \bar{\bf{b}}\}$, there can be only one value of $\tilde\calP_{L_n}$, which we denote $\tilde\calP_{L_n}(\bf{h}, \bar{\bf{b}})$. 

For $\bar{\bf{b}} \in \calB_{n_1}$, the suitable corresponding $\tilde\xi$-names will be the elements $\bf{b} \in \calB_n$ for which $d_\ell(\tilde\calP_{L_n}(\bf{h}_0, \bf{b}_{n_1}), \tilde\calP_{L_n}(\bf{h}, \bar{\bf{b}})) \leq \eps$. More formally, we set
$$J_{\bar{\bf{b}}} := \{\bf{b} \in \calB_n \, | \, d_\ell(\tilde\calP_{L_n}(\bf{h}_0, \bf{b}_{n_1}), \tilde\calP_{L_n}(\bf{h}, \bar{\bf{b}})) \leq \eps\},$$
and we want to build $\psi_{\bf{h}}$ so that we have 
\begin{eq}  \label{eq:marriage_condition}
\psi_{\bf{h}}(\bar{\bf{b}}) \in J_{\bar{\bf{b}}},
\end{eq}
for as many $\bar{\bf{b}} \in \calB_{n_1}$ as possible. 

From \eqref{eq:good_approx_extremal}, it follows that 
$$\bar{d}_n(\calL(\tilde\calP_{[0, n[} \, | \, \Q= \bf{h}), \calL(\tilde\calP_{[0, n[} \, | \, \Q = \bf{h}_0)) \leq 2\eps^3/3.$$
Therefore:
\begin{align*}
\bar{d}_{\ell}(\calL(\tilde\calP_{L_n} \, | \, \Q= \bf{h})&, \calL(\tilde\calP_{L_n} \, | \, \Q = \bf{h}_0)) \\
&\leq \frac{n}{n_1-2n_0} \bar{d}_n(\calL(\tilde\calP_{[0, n[} \, | \, \Q= \bf{h}), \calL(\tilde\calP_{[0, n[} \, | \, \Q = \bf{h}_0)) \\
&\leq \frac{n}{(1-\g)n - n_0} 2\eps^3/3 < \eps^3,
\end{align*}
by choosing $n$ large enough. So there exists $\l \in \P(A^{L_n} \times A^{L_n})$ a coupling of $\calL(\tilde\calP_{L_n} \, | \, \Q= \bf{h})$ and $\calL(\tilde\calP_{L_n} \, | \, \Q = \bf{h}_0)$ such that 
$$\int d_\ell(\bf{p}_1, \bf{p}_2) d\l(\bf{p}_1, \bf{p}_2) \leq \eps^3.$$
Denote by $\l_1$ and $\l_2$ the marginals of $\l$, i.e. $\l_1 = \calL(\tilde\calP_{L_n} \, | \, \Q= \bf{h})$ and $\l_2 = \calL(\tilde\calP_{L_n} \, | \, \Q = \bf{h}_0)$. We are interested in the set $\calA_{\ell} \subset A^{L_n}$ defined by 
$$\calA_\ell := \{ \bf{p} \in A^{L_n} \, ; \, \l(d_\ell(\bf{p}_1, \bf{p}_2)\leq \eps \, | \, \bf{p}_1 = \bf{p}) \geq 1-\eps\}.$$
The following gives an estimate on the measure of $\calA_\ell$:
\begin{align*}
\eps^3 \geq& \int  d_\ell(\bf{p}_1, \bf{p}_2) d\l(\bf{p}_1, \bf{p}_2) \geq \int_{\bf{p}_1 \notin \calA_\ell}  d_\ell(\bf{p}_1, \bf{p}_2) d\l(\bf{p}_1, \bf{p}_2)\\
&= \int_{\bf{p} \notin \calA_\ell} \int d_\ell(\bf{p}_1, \bf{p}_2) d\l(\bf{p}_1, \bf{p}_2 \, | \, \bf{p}_1 = \bf{p}) d\l_1(\bf{p})\\
&\geq \int_{\bf{p} \notin \calA_\ell} \l(d_\ell(\bf{p}_1, \bf{p}_2) > \eps \, | \, \bf{p}_1 = \bf{p}) \cdot \eps d\l_1(\bf{p})\\
&> \eps^2 \mu(\tilde\calP_{L_n} \notin \calA_\ell \, | \, \Q = \bf{h}),
\end{align*}
so $\mu(\tilde\calP_{L_n} \notin \calA_\ell \, | \, \Q = \bf{h}) < \eps$. In other words, if we set 
$$\bar{\calB}_{n_1}(\bf{h}) := \{\bar{\bf{b}} \in \calB_{n_1} \, | \, \tilde\calP_{L_n}(\bf{h}, \bar{\bf{b}}) \in \calA_\ell\},$$
we have $\mu(\xi_{[0, n_1[} \in \bar{\calB}_{n_1}(\bf{h}) \, | \, \Q = \bf{h}) \geq 1-\eps$. The set $\bar{\calB}_{n_1}(\bf{h})$ is the set on which we want \eqref{eq:marriage_condition} to hold. Hall's marriage lemma tells us that there exists an injective map $\psi_{\bf{h}}: \bar{\calB}_{n_1}(\bf{h}) \arr \calB_n$ for which \eqref{eq:marriage_condition} is true if we have the following:
\begin{eq} \label{eq:statement_marriage_condition}
\forall I \subset \bar{\calB}_{n_1}(\bf{h}), \; \#I \leq \# \bigcup_{\bar{\bf{b}} \in I} J_{\bar{\bf{b}}}.
\end{eq}
Let $I \subset \bar{\calB}_{n_1}(\bf{h})$. Consider $K := \bigcup_{\bar{\bf{b}} \in I} \{\tilde\calP_{L_n}(\bf{h}, \bar{\bf{b}})\} \subset \calA_\ell$ and note that 
$$\bigcup_{\bar{\bf{b}} \in I} J_{\bar{\bf{b}}} = \{\bf{b} \in \calB_n \, | \, d_\ell(\tilde\calP_{L_n}(\bf{h}_0, \bf{b}_{n_1}), K) \leq \eps\}.$$
Taking that into account, we have 
\begin{eq} \label{eq:big_calcul}
\begin{split}
\# \bigcup_{\bar{\bf{b}} \in I} J_{\bar{\bf{b}}} &\geq 2^{(h_\mu(\xi, T) - \beta)n}\mu(d_\ell(\tilde\calP_{L_n}(\bf{h_0}, \xi_{[0, n_1[}), K) \leq \eps, \xi_{[0, n[} \in \calB_n)\\
&= 2^{(h_\mu(\xi, T) - \beta)n} \mu(d_\ell(\tilde\calP_{L_n}, K) \leq \eps, \xi_{[0, n[} \in \calB_n \, | \, \calH_{[0, n[} = \bf{h}_0)\\
&= 2^{(h_\mu(\xi, T) - \beta)n} \mu(\xi_{[0, n[} \in \calB_n) \mu(d_\ell(\tilde\calP_{L_n}, K) \leq \eps \, | \, \Q = \bf{h}_0)\\
&\geq 2^{(h_\mu(\xi, T) - \beta)n} (1 - \beta) \l(d_\ell(\bf{p}_1, \bf{p}_2) \leq \eps, \bf{p}_1 \in K)\\
&\geq 2^{(h_\mu(\xi, T) - 2\beta)n} \int_{\bf{p} \in K} \l(d_\ell(\bf{p}_1, \bf{p}_2) \leq \eps \, | \, \bf{p}_1 = \bf{p}) d\l_1(\bf{p}) \\
&\geq 2^{(h_\mu(\xi, T) - 2\beta)n} (1-\eps) \l_1(K), \, \text{ because } K \subset \calA_\ell\\
&\geq 2^{(h_\mu(\xi, T) - 3\beta)n} \mu(\tilde\calP_{L_n} \in K \, | \, \Q = \bf{h}).
\end{split}
\end{eq}
making sure again that $n$ is large enough. Moreover, using \eqref{eq:another_estimate}, we get
\begin{align*}
\# I &\leq 2^{(h_\mu(\xi, T) +2\beta)n_1} \mu(\xi_{[0, n_1[} \in I, \xi_{[0, n[} \in \calB_n)\\
&\leq 2^{(h_\mu(\xi, T) +2\beta)n_1} \mu(\xi_{[0, n_1[} \in I \, | \, \xi_{[0, n[} \in \calB_n)\\
&= 2^{(h_\mu(\xi, T) +2\beta)n_1} \mu(\xi_{[0, n_1[} \in I \, | \, \Q = \bf{h}), \, \text{ because } \xi \indep \calH\\
&\leq 2^{(h_\mu(\xi, T) +2\beta)n_1} \mu(\tilde\calP_{L_n} \in K \, | \, \Q = \bf{h}),\\
\end{align*}
by definition of $K$. Together with \eqref{eq:big_calcul}, it yields
\begin{align*}
\#I &\leq 2^{((1-\g)(h_\mu(\xi, T) + 2\beta) - (h_\mu(\xi, T)-3\beta))n} \# \bigcup_{\bar{\bf{b}} \in I} J_{\bar{\bf{b}}} \\
&\leq 2^{(5\beta-\g h_\mu(\xi, T))n} \# \bigcup_{\bar{\bf{b}} \in I} J_{\bar{\bf{b}}} \leq \# \bigcup_{\bar{\bf{b}} \in I} J_{\bar{\bf{b}}},
\end{align*}
since $5\beta \leq \g h_\mu(\xi, T)$. Therefore there exists an injective map $\psi_{\bf{h}}: \bar{\calB}_{n_1}(\bf{h}) \arr \calB_n$ for which \eqref{eq:marriage_condition} holds. As we noted that $\# \calB_{n_1} \leq \# \calB_n$, $\psi_{\bf{h}}$ can then be extended to an injective map defined on $\calB_{n_1}$ (still taking values in $\calB_n$). We recall that, with $\psi_{\bf{h}}$ built, we set $\phi_{\bf{h}}(\bf{b}) := \psi_{\bf{h}}(\bf{b}_{n_1})$. 

As we announced at the start of our reasoning, we define $\tilde \xi_0$ on the levels of $C_{\bf{h}}$ so that the $\tilde \xi$-name of each sub-column $C_{\bf{h}}^{\bf{b}}$ is $\phi_{\bf{h}}(\bf{b}) = \psi_{\bf{h}}(\bf{b}_{n_1})$. Since this construction can be done with every $\bf{h} \in \bar{\calE}_n$ (with the map $\psi_{\bf{h}}$ depending on $\bf{h}$), we have completed the construction of $\tilde\xi_0$. We now need to check that $\tilde\xi$ satisfies the conditions (i), (ii) and (iii) of our lemma.

\subsubsection*{Step 5: Proving that $\tilde\xi$ satsifies (i), (ii) and (iii)}
We start by estimating the law of $\tilde \xi_0$. Since $\mu(\calT_n) \geq 1-3\beta$, we have
\begin{align*}
\sum_{b \in B}  &|\mu(\tilde\xi_0 = b) - \mu(\xi_0 = b)| \leq  \sum_{b \in B} | \mu(\{\tilde\xi_0 = b \} \cap \calT_n) - \mu(\xi_0=b)\mu(\calT_n)| + 6 \beta\\
&\leq \sum_{b \in B} \sum_{\bf{h} \in \calE_n, \bf{b} \in \calB_n} |\mu(\{\tilde\xi_0=b\}\cap C_{\bf{h}}^{\bf{b}}) - \mu(\xi_0 = b) \mu(C_{\bf{h}}^{\bf{b}})| + 6 \beta.
\end{align*}
We recall that $f_n(\bf{b}, b')$ is the frequency at which the element $b'$ appears in the sequence $\bf{b}$ (see \eqref{eq:def_frequency}). Moreover, one can see that, since $\phi_{\bf{h}}(\bf{b})$ is the $\tilde\xi$-name of $C_{\bf{h}}^{\bf{b}}$ and all the levels of $C_{\bf{h}}^{\bf{b}}$ have the same measure, we have 
$$\mu(\{\tilde\xi_0=b\}\cap C_{\bf{h}}^{\bf{b}}) = \mu(C_{\bf{h}}^{\bf{b}}) \cdot f_n(\phi_{\bf{h}}(\bf{b}), b).$$
Therefore, because $\phi_{\bf{h}}$ takes values in $\calB_n$, \eqref{eq:law_ergo_thm} yields:
\begin{align*}
\sum_{b \in B}  |\mu(\tilde\xi_0 = b) - \mu(\xi_0 = b)| &\leq \sum_{b \in B} \sum_{\bf{h} \in \calE_n, \bf{b} \in \calB_n} \mu(C_{\bf{h}}^{\bf{b}}) |f_n(\phi_{\bf{h}}(\bf{b}), b) - \mu(\xi_0 = b)| + 6 \beta\\
&\leq \beta \mu(\calT_n) + 6\beta \leq 7\beta.
\end{align*}
This means that $\bar{d}_1(\calL(\tilde\xi_0), \calL(\xi_0)) \leq 7 \beta \leq \a$ (using \ref{d}).

We now turn our attention to the entropy of $\calH \vee \tilde\xi$. The $\tilde\xi$-name of a column $C_{\bf{h}}^{\bf{b}}$ is $\psi_{\bf{h}}(\bf{b}_{n_1})$, and since $\psi_{\bf{h}}$ is invective, we can deduce $\bf{b}_{n_1}$ from the $\tilde\xi$-name of $C_{\bf{h}}^{\bf{b}}$. This means that, on the levels of the truncated tower $\calT_{n_1} := \{T^jF\}_{0 \leq j \leq n_1-1}$, $\xi_0$ is $(\calH \vee \tilde\xi)_{[-n_1, n[}$-measurable. Indeed, if $x$ is in $\calT_{n_1}$ and the sequence $(\calH \vee \tilde\xi)_{[-n_1, n[}(x)$ is known, the sequence $\tilde\xi_{[-n_1, 0[}(x)$ must contain a ``$*$'', which indicates the moment the past orbit of $x$ passes trough $G$ before entering $\calT_n$. So the position of ``$*$'' in $\tilde\xi_{[-n_1, 0[}(x)$ tells us the index of the level of $\calT_{n_1}$ the point $x$ is on, which we call $j_0$. In other words, we mean that $T^{-j_0}x \in F$. Then, $(\calH \vee \tilde\xi)_{[-j_0, n - j_0[}$ gives the $(\calH \vee \tilde\xi)$-name of the column $x$ is on, from which we deduce the truncated $\xi$-name of length $n_1$ of the column. Finally, the $j_0$-th letter of that name gives us $\xi_0(x)$. 

Therefore, if we combine the previous paragraph with the fact that $\mu(\calT_{n_1}) \geq (1-\g)\mu(\calT_n)$, there exists a $(\calH \vee \tilde\xi)_{[-n_1, n[}$-measurable random variable $\chi_0$ such that $\mu(\chi_0 \neq \xi_0) \leq \beta + \gamma \leq 2\g$ (because $\beta \leq \g$). So, by the choice of $\g$ made in \ref{c}, we can apply Lemma \ref{lem:continuity_entropy} and conclude that
\begin{align*}
h_\mu(\calH \vee \xi, T) &\leq h_\mu(\calH \vee \chi, T) + \a \leq h_\mu(\calH \vee \tilde\xi, T) + \a.
\end{align*}
Because $\calH \vee \xi$ generates $\A$, we also have the converse inequality 
$$h_\mu(\calH \vee \tilde\xi, T) \leq h_\mu(\calH \vee \xi, T),$$
so we have proved that $\tilde \xi$ satisfies condition (ii) of our lemma. 

We are now left with proving (iii). If we consider that $\tilde\xi_{[-n, n[}(x)$ is known, we deduce that, if the symbol ``$*$'' appears in $\tilde\xi_{[-n, 0[}$, then $x$ is in $\calT_n$ and the position of ``$*$'' tells us the index $j_0$ of the level of $\calT_n$ the point $x$ is on. Then, using the notation introduced in our construction above, we can look at the random variable $\tilde\calP_{j_0}(\bf{h}_0, \tilde\xi_{[-j_0, n_1 - j_0[})$. It is $\tilde\xi_{[-n, n[}$-measurable and we are going to show that it satisfies
\begin{eq} \label{eq:goal}
\mu(\tilde\calP_{j_0}(\bf{h}_0, \tilde\xi_{[-j_0, n_1 - j_0[}) \neq \calP_0) \leq 5 \eps. 
\end{eq}
We start by looking at a column $C_{\bf{h}}$ for some $\bf{h} \in \bar{\calE}_n$. We then split it into sub-columns $C_{\bf{h}}^{\bf{b}}$. If $\bf{b}_{n_1} \in \bar{\calB}_{n_1}(\bf{h})$, we are going to use \eqref{eq:marriage_condition}. First, we need to remember that if $x$ is in $C_{\bf{h}}^{\bf{b}}$, then $\tilde\xi_{[-j_0, n-j_0[}(x)$ gives the $\tilde\xi$-name of the column $C_{\bf{h}}^{\bf{b}}$. But, by construction, that name is $\psi_{\bf{h}}(\bf{b}_{n_1})$, and, because we are looking at the case where $\bf{h} \in \bar{\calE}_n$ and $\bf{b}_{n_1} \in \bar{\calB}_{n_1}(\bf{h})$,  \eqref{eq:marriage_condition} holds. So we have
$$d_\ell(\tilde\calP_{L_n}(\bf{h}_0, \tilde\xi_{[-j_0, n_1 - j_0[}), \tilde\calP_{L_n}(\bf{h}, \bf{b}_{n_1})) \leq \eps.$$
We recall that $L_n = [n_0, n_1-n_0[$ and $\ell$ is its length. By definition of $d_\ell$, we know that the number of levels $j_0 \in L_n$ on which we have $\tilde\calP_{j_0}(\bf{h}_0, \tilde\xi_{[-j_0, n_1 - j_0[}) = \tilde\calP_{j_0}(\bf{h}, \bf{b}_{n_1})$ is greater than $(1- \eps)\ell$. Moreover, by construction, for $j_0 \in L_n$, on the $j_0$-th level of $C_{\bf{h}}^{\bf{b}}$, we have $\tilde\calP_0 = \tilde\calP_{j_0}(\bf{h}, \bf{b}_{n_1})$. Finally, since $\ell = n_1 - 2n_0$, we have
\begin{align*}
\mu(\tilde\calP_{j_0}(\bf{h}_0, \tilde\xi_{[-j_0, n_1 - j_0[}) \neq \tilde\calP_0 \, | \, C_{\bf{h}}^{\bf{b}}) &\leq \frac{n - (1-\eps)(n_1-2n_0)}{n}\\
&\leq \frac{n-(1-\eps)(1-\g)n + 2n_0}{n}\\
&\leq \eps + \g + \frac{2n_0}{n} \leq 2 \eps,
\end{align*}
since $\g \leq \eps/2$ and we can assume that $n$ is large enough so that $2n_0/n \leq \eps/2$. Moreover, the fact that $\bf{h} \in \bar{\calE}_n$ implies that $\mu(\xi_{[0, n_1[} \in \bar{\calB}_{n_1}(\bf{h}) \, | \, \Q = \bf{h}) \geq 1-\eps$, and, combining it with \eqref{eq:indep_base_tower}, we can see that
$$\mu\left(\left.\bigcup_{\bf{b}_{n_1}  \notin \bar{\calB}_{n_1}(\bf{h})} C_{\bf{h}}^{\bf{b}} \, \right| \, C_{\bf{h}}\right) \leq \eps.$$
Therefore
$$\mu(\tilde\calP_{j_0}(\bf{h}_0, \tilde\xi_{[-j_0, n_1 - j_0[}) \neq \tilde\calP_0 \, | \, C_{\bf{h}}) \leq 3\eps.$$
Next
\begin{align*}
\mu(\tilde\calP_{j_0}(\bf{h}_0, \tilde\xi_{[-j_0, n_1 - j_0[}) \neq \tilde\calP_0) &\leq 3 \eps + \mu\left(\bigcup_{\bf{h}  \notin \bar{\calE}_n} C_{\bf{h}}\right) + \mu(\calT_n^c)\\
&\leq 3 \eps + \eps + \mu(\calT_n^c) \; \text{ using \eqref{eq:indep_base_tower} and \eqref{eq:set_extremality}}\\
&\leq 4\eps + 3 \beta.
\end{align*}
Finally, $\tilde\calP_0$ was chosen so that $\mu(\tilde\calP_0 \neq \calP_0) \leq \beta^2 \leq \beta$, since $\beta \leq \eps/4$, we have proven \eqref{eq:goal}, and therefore, up to replacing $\eps$ by $\eps/5$, we have shown that
$$\calP_0 \preceq_\eps \s(\tilde\xi).$$
\end{proof}

\subsection{Application of the technical lemma}

We are now left with proving Proposition \ref{prop:unique_weak_pinsker_bernoulli} using Lemma \ref{lem:unique_weak_pinsker_bernoulli}. This is done using some abstract results from Thouvenot \cite[Proposition 2', Proposition 3]{article_thouvenot_RFD}. We start by rewriting those results with our notation. We give a slight simplification, adapted to our setup. 

First, \cite[Proposition 2']{article_thouvenot_RFD} tells us that a process close enough to an i.i.d. process independent from $\scrH$ in law and entropy can be turned into an i.i.d. process independent from $\scrH$. 
\begin{prop} \label{prop:thouvenot_2}
Let $\bfX := (X, \A, \mu, T)$ be an ergodic system of finite entropy. Let $\calH$ be a finite valued process defined on $\bfX$ and $\rho$ be a probability measure on a finite alphabet $B$. For every $\eps > 0$, there exist $\a > 0$ such that if a random variable $\tilde\xi_0: X \arr B$ satisfies
\begin{enumerate} [label = (\roman*)]
\item $\bar{d}_1(\rho, \calL(\tilde\xi_0)) \leq \a$,
\item and $0 \leq h_\mu(\calH, T) + H(\rho) - h_\mu(\calH \vee \tilde \xi, T) \leq \a$,
\end{enumerate}
then there exists a random variable $\xi'_0$ of law $\rho$ such that the process $\xi' := (\xi'_0 \circ T^n)_{n \in \bbZ}$ is i.i.d., independent from $\calH$ and we have
$$\mu(\tilde\xi_0 \neq \xi'_0) \leq \eps.$$
\end{prop}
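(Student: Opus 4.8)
The plan is to read hypotheses (i) and (ii) as saying that $\tilde\xi$ is \emph{almost} an i.i.d.\ $\rho$-process that is \emph{almost} independent of $\calH$, and then to invoke the relative version of Ornstein's theory to upgrade ``almost'' to ``exactly'' at a small Hamming cost. Concretely I would split the argument into an entropy--rigidity step, which converts the quantitative hypotheses into the input data for the relative Ornstein machinery, and a correction step, which is where essentially all the work lies; this second step is precisely the relative finite determination of i.i.d.\ processes over an arbitrary base, so what follows is really a reduction to the machinery recalled in Section~\ref{sect:entropy_ornstein} and ultimately to \cite[Proposition~2']{article_thouvenot_RFD}.

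\emph{Entropy--rigidity step.} One always has $h_\mu(\calH \vee \tilde\xi, T) \le h_\mu(\calH, T) + h_\mu(\tilde\xi, T) \le h_\mu(\calH, T) + H_\mu(\tilde\xi_0)$, by subadditivity of Kolmogorov--Sinaï entropy under joins and by Theorem~\ref{thm:kolmogorov-sinai}. By (i) and the uniform continuity of the Shannon entropy functional $H$ on $\P(B)$, there is a modulus $\omega$ with $\omega(\a) \to 0$ as $\a \to 0$ and $|H_\mu(\tilde\xi_0) - H(\rho)| \le \omega(\a)$. Feeding this into (ii) I would extract two facts. First, $h_\mu(\tilde\xi, T) \ge H(\rho) - \a \ge H_\mu(\tilde\xi_0) - \a - \omega(\a)$, so the per-symbol entropy of $\tilde\xi$ is within $O(\a)$ of $H_\mu(\tilde\xi_0)$, i.e.\ $\tilde\xi$ is within $O(\a)$ of being i.i.d. Second, the defect $h_\mu(\calH,T) + h_\mu(\tilde\xi,T) - h_\mu(\calH \vee \tilde\xi, T)$ is $O(\a)$, which by the standard entropy-to-independence estimate forces the conditional laws of $\tilde\xi_{[0,n[}$ given the whole of $\calH$ to agree on average with $\calL(\tilde\xi_{[0,n[})$ up to a bound tending to $0$ with $\a$. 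Putting these together: if $\eta$ denotes an auxiliary i.i.d.\ $\rho$-process independent of $\calH$, realised on $\bfX \otimes \bfB$ with $\bfB$ the $\rho$-Bernoulli shift, then the pair $(\calH, \tilde\xi)$ matches $(\calH, \eta)$ on every fixed coordinate window up to an error that $\to 0$ with $\a$, and the entropies agree up to $O(\a)$.

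\emph{Correction step (the hard part).} The process $\eta$ is i.i.d.\ and independent of $\calH$, hence relatively very weak Bernoulli over $\calH$ (the relative condition degenerates to the absolute one, which holds by Theorem~\ref{thm:FD_eq_VWB}), hence relatively finitely determined over $\calH$ by Theorem~\ref{thm:rfd_eq_rvwb}. Relative finite determination is exactly the statement that a process on any ergodic extension of $\bfX$ whose finite-dimensional distributions jointly with $\calH$ match those of $(\calH,\eta)$ up to some $\delta$, and whose entropy matches up to $\delta$, can be corrected, off a set of measure $\le \eps$, into a process of law exactly $\rho$ that is i.i.d.\ and independent of $\calH$. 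Applying this with $\delta$ fixed first and then $\a$ chosen small enough (via the entropy--rigidity step) that the hypotheses are met, and to $\tilde\xi$ on $\bfX$ itself, produces the required $\xi'_0$, with $\xi' := (\xi'_0 \circ T^n)_{n \in \bbZ}$ i.i.d., independent of $\calH$, of law $\rho$, and $\mu(\tilde\xi_0 \ne \xi'_0) \le \eps$. I expect this to be the only genuine obstacle: unwinding relative finite determination is where Rokhlin towers, Hall's marriage lemma and iterated $\bar d$-approximations — the very tools used in Lemma~\ref{lem:unique_weak_pinsker_bernoulli} — actually enter, and a self-contained treatment would amount to reproving Thouvenot's relative Sinaï/Ornstein theorem; in the body of the paper one simply cites \cite[Proposition~2']{article_thouvenot_RFD}. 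A final bookkeeping remark: the corrected process must have law \emph{exactly} $\rho$, which the relative-determination statement can be arranged to deliver directly; alternatively one first replaces $\tilde\xi_0$ by a variable of law exactly $\rho$ off a set of measure $\le \bar{d}_1(\rho, \calL(\tilde\xi_0)) \le \a$ (an optimal coupling), absorbs the resulting entropy change with Lemma~\ref{lem:continuity_entropy}, and then runs the argument, the two small errors summing to something still $\le \eps$ after shrinking $\a$.
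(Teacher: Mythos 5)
Your proposal correctly identifies that this proposition is a reformulation of Thouvenot's result \cite[Proposition 2']{article_thouvenot_RFD} and that a self-contained proof would amount to redeveloping his relative Ornstein machinery; this matches the paper exactly, which gives no proof and explicitly presents the statement as a rewriting and slight simplification of that citation. Your entropy-rigidity discussion is a reasonable gloss on why hypotheses (i) and (ii) are the natural quantitative inputs, but note that translating the small entropy defect into distributional closeness, and then into an actual $L^0$-close i.i.d.\ copy inside $\bfX$ independent of $\calH$, is precisely the content of Thouvenot's proposition rather than a consequence of relative finite determination alone, so the deferral to the cited result is not optional.
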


Next, \cite[Proposition 3]{article_thouvenot_RFD} tells us that on a system that is relatively Bernoulli over a factor $\scrH$, any i.i.d. process independent from $\scrH$ with the right entropy can be turned into an independent complement of $\scrH$:
\begin{prop} \label{prop:thouvenot_3}
Let $\bfX := (X, \A, \mu, T)$ be an ergodic system, $\calH$ a finite valued process and $\xi$ a finite valued i.i.d. process independent from $\calH$ such that $\A = \s(\calH) \vee \s(\xi)$ mod $\mu$. For any $\eps > 0$ and any i.i.d. process $\zeta$ independent from $\calH$ such that $h_\mu(\xi, T) = h_\mu(\zeta, T)$, there exists $\tilde\zeta_0$ such that $\calL(\calH \vee \tilde \zeta) = \calL(\calH \vee \zeta)$, $\A = \s(\calH) \vee \s(\tilde\zeta)$ mod $\mu$ and 
$$\mu(\tilde \zeta_0 \neq \zeta_0) \leq \eps.$$
\end{prop}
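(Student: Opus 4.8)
The plan is to prove Proposition~\ref{prop:thouvenot_3} by a Rokhlin-tower coding construction carried out \emph{over} the factor $\s(\calH)$, fed into Proposition~\ref{prop:thouvenot_2}, and iterated with fast convergence. If $h_\mu(\xi,T)=0$ then $h_\mu(\zeta,T)=0$, so $\xi_0,\zeta_0$ are a.s.\ constant, $\A=\s(\calH)=\s(\calH)\vee\s(\zeta)$ mod $\mu$, and $\tilde\zeta:=\zeta$ works. So assume $h_\mu(\xi,T)>0$; then $\mu$ is non-atomic and, by additivity of entropy over independent factors, $h_\mu(\calH\vee\zeta',T)=h_\mu(\calH,T)+h_\mu(\zeta,T)=h_\mu(\calH,T)+h_\mu(\xi,T)=h(\bfX)$ for every process $\zeta'$ that is i.i.d.\ of law $\calL(\zeta_0)$ and independent of $\calH$. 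Fix a finite generator $\calP_0\colon X\arr A$ of $\A$ (Krieger, Theorem~\ref{thm:krieger}). I will build processes $(\zeta^{(m)})_{m\ge 0}$ with $\zeta^{(0)}:=\zeta$, each i.i.d.\ of law $\calL(\zeta_0)$ and independent of $\calH$, with $\mu(\zeta^{(m+1)}_0\ne\zeta^{(m)}_0)\le 2^{-m-1}\eps$, and such that for some $\ell_m\to\infty$ the word $\calP_{[0,\ell_m[}$ agrees, off a set of measure $\to 0$, with a $(\s(\calH)\vee\s(\zeta^{(m)}))$-measurable random variable read through a window so narrow that all later perturbations are too sparse to corrupt it. Granting this, the a.s.\ limit $\tilde\zeta$ exists (Borel--Cantelli, coordinatewise, using $T$-invariance), is again i.i.d.\ of law $\calL(\zeta_0)$ and independent of $\calH$, so $\calL(\calH\vee\tilde\zeta)=\calL(\calH\vee\zeta)$ and $\mu(\tilde\zeta_0\ne\zeta_0)\le\eps$; and for every $j$, $\calP_0\circ T^{\,j}$ is an in-probability limit of $(\s(\calH)\vee\s(\tilde\zeta))$-measurable functions, hence lies in $\s(\calH)\vee\s(\tilde\zeta)$, so $\A=\s(\{\calP_0\circ T^{\,j}\}_{j\in\bbZ})\subset\s(\calH)\vee\s(\tilde\zeta)\subset\A$, i.e.\ $\A=\s(\calH)\vee\s(\tilde\zeta)$ mod $\mu$. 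One then rescales $\eps$.

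The heart is the one-step improvement: given $\zeta^{(m)}$ as above and a small target $\a>0$, produce $\zeta^{(m+1)}$. Take a Rokhlin tower $\calT_n$ of large height $n$ (Proposition~\ref{thm:rokhlin}) with base independent of $(\calH\vee\xi\vee\zeta^{(m)}\vee\calP)_{[0,n[}$, and trim to the columns where Shannon--McMillan--Breiman (Theorem~\ref{thm:SMB}) and Birkhoff's theorem hold. Refine $\calT_n$ into $\calH$-columns, then $(\calH,\zeta^{(m)})$-columns, then $(\calH,\zeta^{(m)},\xi)$-sub-columns; the key point is that $\s(\calH)\vee\s(\zeta^{(m)})\vee\s(\xi)=\A$ has entropy $h(\bfX)=h_\mu(\calH\vee\zeta^{(m)},T)$, so within one $(\calH,\zeta^{(m)})$-column there are only subexponentially many typical $\xi_{[0,n[}$-names, hence subexponentially many $\calP_{[0,n[}$-names. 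Redefine the process on $\calT_n$ by keeping the $\zeta^{(m)}$-name of each sub-column but altering it on at most $\a n$ coordinates, which serve (i) to plant a marker recording the time of entry into $\calT_n$ (as in the construction proving Lemma~\ref{lem:unique_weak_pinsker_bernoulli}) and (ii) to encode a label designating the $\calP_{[0,n[}$-name of the sub-column --- there is ample room, since $2^{\a n}$ dwarfs the number of labels --- while keeping the empirical letter frequencies within $\a$ of $\calL(\zeta_0)$ (Birkhoff) and making the reassignment injective on $(\calH,\zeta^{(m)})$-columns; this injectivity (so that the $\zeta^{(m)}$-name, hence the tower position and the entrance time, can be read back off the new name) comes from Hall's marriage lemma (Lemma~\ref{lem:hall}) via an SMB cardinality estimate, exactly as in the proof of Lemma~\ref{lem:unique_weak_pinsker_bernoulli}. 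Unlike in that lemma, the decoder here may use $\calH$, so the coding is done column by column and no cross-column uniformity --- hence no appeal to extremality --- is needed. The coded process $\eta$ then satisfies: $\calP_{[0,\ell[}$ (with $\ell$ of order $n$) agrees off an $O(\a)$-set with a $(\s(\calH)\vee\s(\eta))$-measurable variable; $(\calH\vee\eta)$ recovers all but an $\a$-fraction of the $\zeta^{(m)}$-name, so $h(\bfX)-O(\a)\le h_\mu(\calH\vee\eta,T)\le h(\bfX)$; and the law of $\eta_0$ is $\bar{d}_1$-within $O(\a)$ of $\calL(\zeta_0)$. These are precisely the hypotheses of Proposition~\ref{prop:thouvenot_2} (with $\rho=\calL(\zeta_0)$, using $h(\bfX)=h_\mu(\calH,T)+H(\rho)$), which replaces $\eta$ by an honest i.i.d.\ process $\zeta^{(m+1)}$ of law $\calL(\zeta_0)$, independent of $\calH$, with $\mu(\zeta^{(m+1)}_0\ne\eta_0)$ as small as we please.

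The only genuinely delicate point is the order of choice of parameters making the iteration converge: at stage $m$ one fixes the desired precision, then the closeness $\eps(m)$ passed to Proposition~\ref{prop:thouvenot_2} (taken super-exponentially small, e.g.\ $4^{-m}$), then $\a$ and the tower height $n_m$; one claims recovery only of $\calP_{[0,\ell_m[}$ for a modest $\ell_m\approx 2^{\,m}\ll n_m$, which costs only $O(\ell_m\eps(m))=O(2^{-m})$ in passing from $\eta$ to $\zeta^{(m+1)}$, and one takes the densities of all \emph{later} perturbations summably small relative to every $n_k$ already chosen, so that the stage-$m$ decoding window survives every future modification with total error still $\to 0$. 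This is the standard Ornstein ``fast convergence'' bookkeeping; it is where the care lies, but it is routine. Finally, $h_\mu(\calH\vee\zeta^{(m)},T)=h(\bfX)$ is the one structural input: it is what makes the per-column families of $\xi$-names (and of $\calP$-names) subexponential, hence encodable by a sparse perturbation, and this is exactly where the hypothesis ``$\xi$ is an i.i.d.\ complement of $\s(\calH)$ with $h_\mu(\xi,T)=h_\mu(\zeta,T)$'' enters.
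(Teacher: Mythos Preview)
The paper does not prove this proposition; it is quoted from Thouvenot \cite[Proposition~3]{article_thouvenot_RFD} and invoked as a black box in the proof of Proposition~\ref{prop:unique_weak_pinsker_bernoulli}. So there is no paper-proof to compare against, and your sketch is effectively a reconstruction of Thouvenot's argument.

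The strategy is correct and is indeed how such results are proved. The central observation --- that $h_\mu(\calH\vee\zeta^{(m)},T)=h(\bfX)$ forces $h_\mu(\calP\mid\calH\vee\zeta^{(m)},T)=0$, so each typical $(\calH,\zeta^{(m)})_{[0,n[}$-column carries only $2^{o(n)}$ typical $\calP$-names, encodable into a sparse perturbation of $\zeta^{(m)}$ --- is exactly the right mechanism, and this is where the hypothesis $h_\mu(\xi,T)=h_\mu(\zeta,T)$ enters. There is, however, a genuine order-of-quantifiers wrinkle that your ``$\ell_m\approx 2^m$'' remark does not resolve. The decoder for $\calP_0$ built at stage $m$ must locate the tower-entry marker, which requires reading a window in $\eta$ of size comparable to the tower height $n_m$ --- regardless of how short a $\calP$-word you intend to extract. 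Passing from $\eta$ to $\zeta^{(m+1)}$ via Proposition~\ref{prop:thouvenot_2} corrupts that window with probability $\approx n_m\cdot\eps(m+1)$; but $\eps(m+1)$ is fixed \emph{before} the $\a$ it determines, and $n_m$ must then be taken large depending on that $\a$, so $n_m\eps(m+1)$ is not controlled by your scheme. The standard fix is to decouple the coding block length from the tower height: plant markers every $\ell$ steps inside a tower of height $n\gg\ell$, choose $\ell$ first (large enough for SMB and the subexponential-name count at the required precision), then choose the Proposition~\ref{prop:thouvenot_2} target $\eps'$ with $\ell\cdot\eps'$ small, and only then take $n$ large. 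With that amendment the iteration closes and your argument goes through; the same circularity, incidentally, is present (and similarly elided) in the paper's own proof of Proposition~\ref{prop:unique_weak_pinsker_bernoulli}.
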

We are now fully equipped to end the proof of Proposition \ref{prop:unique_weak_pinsker_bernoulli}:
\begin{proof}[Proof of Proposition \ref{prop:unique_weak_pinsker_bernoulli}]
Let $\bfX := (X, \A, \mu, T)$ be a Bernoulli shift of finite entropy and $\calP_0: X \arr A$ a finite valued random variable such that $\A = \s(\{\calP_0 \circ T^n\}_{n \in \bbZ})$. As we consider a factor $\s$-algebra $\scrH$ of $\bfX$, it has finite entropy, therefore there exists a finite valued random variable $\calH_0: X \arr H$ such that the process $\calH := (\calH_0 \circ T^n)_{n \in \bbZ}$ generates $\scrH$. Lastly, we take an i.i.d. process $\xi$ independent from $\scrH$ such that $\A = \scrH \vee \s(\xi)$ mod $\mu$. Let $\eps > 0$.

Now, Lemma \ref{lem:unique_weak_pinsker_bernoulli} tells us that there is $\delta > 0$ for which, if $h_\mu(\scrH, T) \leq \delta$, then for any $\a >0$, there is a random variable $\tilde\xi_0$ such that 
\begin{enumerate} [label = (\roman*)]
\item $\bar{d}_1(\calL(\xi_0), \calL(\tilde\xi_0)) \leq \a$,
\item $0 \leq h_\mu(\calH \vee \xi, T) - h_\mu(\calH \vee \tilde \xi, T) \leq \a$,
\item and $\calP_0 \preceq_{\eps/4} \s(\tilde\xi)$.
\end{enumerate}
Denote $\tilde\calP_0$ a $\tilde\xi$-measurable random variable such that $\mu(\tilde\calP_0 \neq \calP_0) \leq \eps/4$. We can find an integer $N \geq 1$ for which $\tilde\calP_0 \preceq_{\eps/4} \tilde\xi_{[-N, N]}$ and set $\eps_1 := \eps/(4(2N+1))$. If $\a$ is chosen small enough, then Proposition \ref{prop:thouvenot_2} tells us that there is a random variable $\xi'_0$ such that the process $(\xi'_0\circ T^n)_{n \in \bbZ}$ is i.i.d., independent from $\calH$ and we have $\mu(\xi'_0 \neq \tilde\xi_0) \leq \eps_1$. Finally, Proposition \ref{prop:thouvenot_3} tells us that we can then find a random variable $\xi''_0$ for which the process $(\xi''_0\circ T^n)_{n \in \bbZ}$ is still i.i.d., independent from $\calH$, but we also have that $\A = \scrH \vee \s(\xi'')$ mod $\mu$ and $\mu(\xi'_0 \neq \tilde\xi_0) \leq \eps_1$. So we have $\mu(\xi''_0 \neq \tilde\xi_0) \leq 2 \eps_1$. 

Combining that with the fact that $\tilde\calP_0 \preceq_{\eps/4} \tilde\xi_{[-N, N]}$, we get that $\tilde\calP_0 \preceq_{3\eps/4} \xi''_{[-N, N]}$, so
$$\calP_0 \preceq_{\eps} \xi''_{[-N, N]}.$$
Setting $\B := \s(\xi'')$, we get the Bernoulli factor desired to prove our proposition. 
\end{proof}

\subsection{Proof of Theorem \ref{thm:unique_weak_pinsker_bernoulli}}
\label{sect:proof_of_thm_331}

In the previous section, we managed to conclude the proof of Proposition \ref{prop:unique_weak_pinsker_bernoulli}. We now see how Theorem \ref{thm:unique_weak_pinsker_bernoulli} follows from that proposition:
\begin{proof}[Proof of Theorem \ref{thm:unique_weak_pinsker_bernoulli}]
Let $\bfX :=(X, \A, \mu,T)$ be a Bernoulli system and $\F := (\F_n)_{n \leq 0}$ be a weak Pinsker filtration. Since $\F$ is a weak Pinsker filtration, if $(\F_n)_{ \leq -1}$ is of product type, so is $\F$. Therefore, up to replacing $\bfX$ by the factor generated by $\F_{-1}$, we can assume that $\bfX$ has finite entropy. Thanks to Theorem \ref{thm:krieger}, this means that we can set a finite alphabet $A$ and a random variable $\calP_0: X \arr A$ such that the corresponding process $\calP := (\calP_0\circ T^i)_{i \in \bbZ}$ generates $\A$, i.e. $\A = \s(\calP)$ mod $\mu$. Let $(\eps_k)_{k \geq 1}$ be a decreasing sequence of positive numbers such that $\lim_{k\arr \infty} \eps_k = 0$. 

We need to build a strictly increasing sequence $(n_k)_{k \leq 0}$ such that $(\F_{n_k})_{k \leq 0}$ is of product type. We start by setting $n_0 = 0$. Since $\lim_{n \arr -\infty}h_\mu(\F_n, T) =0$, we can choose $n_{-1} \leq 0$ large enough (in absolute value), so that $h_\mu(\F_{n_{-1}}, T)$ is small enough for Proposition \ref{prop:unique_weak_pinsker_bernoulli} to enable us to build a Bernoulli factor $\s$-algebra $\B_{n_{-1}}$ that is an independent complement of $\F_{n_{-1}}$ such that $\calP_0 \preceq_{\eps_1} \B_{n_{-1}}$. 

Now take $k \leq -1$ and assume that we have built $(\B_{n_{-1}}, ..., \B_{n_k})$ such that they are mutually independent Bernoulli factors such that for $k \leq j  \leq -1$, $\B_{n_j}$ is independent from $\F_{n_j}$, $\F_{n_{j+1}} = \F_{n_j} \vee \B_{n_j}$ and we have
\begin{eq} \label{eq:main_approx_P}
\calP_0 \preceq_{\eps_{|k|}} \bigvee_{k \leq j \leq -1} \B_{n_j}.
\end{eq}
By construction of the $\B_{n_j}$, we know that $\calP$ is measurable with respect to $\F_{n_k} \vee \bigvee_{k \leq j \leq -1} \B_{n_j}$. Moreover, using again Theorem \ref{thm:krieger}, there is a random variable $\calP^{(k)}_0$ such that the process $\calP^{(k)} := (\calP^{(k)}_0 \circ T^i)_{i \in \bbZ}$ generates $\F_{n_k}$. So there exists an integer $N \geq 1$ such that 
\begin{eq} \label{eq:P_approx}
\calP_0 \preceq_{\eps_{|k|+1}/2} \calP^{(k)}_{[-N, N]} \vee \bigvee_{k \leq j \leq -1} \B_{n_j}.
\end{eq}
Then set $\tilde\eps := \eps_{|k|+1}/(2(2N+1)) > 0$. As we did above, we choose $n_{k-1} \leq n_k$ large enough in absolute value so that  $h_\mu(\F_{n_{k-1}}, T)$ is small enough for us to apply Proposition \ref{prop:unique_weak_pinsker_bernoulli} to find a Bernoulli factor $\B_{n_{k-1}} \subset \F_{n_k}$ such that $\B_{n_{k-1}} {\indep} \F_{n_{k-1}}$, $\F_{n_k} = \F_{n_{k-1}} \vee \B_{n_{k-1}}$ and 
\begin{eq} \label{eq:F_approx}
\calP^{(k)}_0 \preceq_{\tilde \eps} \B_{n_{k-1}}.
\end{eq}
Putting \eqref{eq:P_approx} and \eqref{eq:F_approx} together, we get
$$\calP_0 \preceq_{\eps_{|k|+1}} \bigvee_{k-1 \leq j \leq -1} \B_{n_j}.$$

Iterating this for every $k \leq -1$ ends our construction of $(n_k)_{k \leq 0}$ and $(\B_{n_k})_{k \leq -1}$. Therefore \eqref{eq:main_approx_P} holds for every $k \leq -1$. It follows then that $\calP_0$ is measurable with respect to 
$$\bigvee_{j \leq -1} \B_{n_j}.$$
Since the $\B_{n_j}$ are factor $\s$-algebras, the full process $\calP$ is also $\bigvee_{j \leq -1} \B_{n_j}$-measurable. Finally, $\calP$ generates $\A$, so 
$$\bigvee_{j \leq -1} \B_{n_j} = \A = \F_0 \, \text{ mod } \mu.$$

Let $k \leq -1$, and set $\E_1:= \bigvee_{j \leq k-1} \B_{n_j}$ and $\E_2 := \bigvee_{k \leq j \leq -1} \B_{n_j}$. By construction, we have 
$$\E_1 \subset \F_{n_k}, \; \F_{n_k} \indep \E_2, \; \text{ and } \; \F_0 = \E_1 \vee \E_2.$$
We use this to see that if $f$ is $\F_{n_k}$-measurable, we have
$$f = \bbE[f \, | \, \F_0] = \bbE[f \, | \, \E_1 \vee \E_2] = \bbE[f \, | \, \E_1],$$
which proves that 
$$\F_{n_k} = \E_1 = \bigvee_{j \leq k-1} \B_{n_j} \; \text{ mod } \mu.$$
\end{proof}

\section{Examples of weak Pinsker filtrations generated by a cellular automaton}
\label{sect:example_weak_pinsker}

Up to this point, we have discussed the existence and abstract properties of weak Pinsker filtrations. Now we want to give explicit examples to get a more concrete idea of what those objects can look like. We take inspiration from \cite{article_PL} and use cellular automata to generate our filtrations. We describe in the following paragraphs how this is done. 

Let $A$ be a finite alphabet. A cellular automaton (or, more precisely, a deterministic cellular automaton) $\tau: A^\bbZ \arr A^\bbZ$ maps $A^\bbZ$ onto itself as follows: take $F \subset \bbZ$ finite, which we call a neighborhood, and a local map $\tau_0: A^F \arr A$. Then define 
$$\tau: (a_n)_{n \in \bbZ} \mapsto (\tau_0((a_{n+k})_{k \in F}))_{n \in \bbZ}.$$
Here, we will only consider examples in which $F = \{0, 1\}$. Therefore, our automata will be determined by a local map of the form $\tau_0: A^{\{0, 1\}} \arr A$. One can note that, by construction, cellular automata commute with the shift transformation
$$S: (a_n)_{n \in \bbZ} \mapsto (a_{n+1})_{n \in \bbZ}.$$
So we can consider a dynamical system of the form $\bfY := (A^\bbZ, \B, \nu, S)$ where $\nu$ is a $S$-invariant measure, and note that the $\s$-algebra $\s(\tau)$ generated by $\tau$ is a factor $\s$-algebra. We can do better and iterate $\tau$ to generate a filtration: 
$$\text{for } n \leq 0, \; \F_n := \s(\tau^{|n|}).$$ 
In that case, each $\F_n$ is a factor $\s$-algebra of $\bfY$, and therefore $\F := (\F_n)_{n \leq 0}$ is a \emph{dynamical filtration}. So, we see that cellular automata give a natural way to construct dynamical filtrations. 

In fact, the theory of dynamical filtrations we presented in Section \ref{sect:dyn_filtr} was initiated in \cite{article_PL} in the setting of filtrations generated by cellular automata. However, the automata studied there preserve the product measure, and therefore the entropy of the associated factor $\s$-algebras $\F_n$ will be the same for every $n \leq 0$. This prevents the filtration from being weak Pinsker. 

Here, we will consider a different automaton: take $A$ a finite alphabet and assume that one element of $A$ is labeled <<$0$>>. Then define the following local map 
\begin{eq}\label{eq:local_function}
\begin{array}{cccc}
	\tau_0: & A^2 &\to & A \\
	& (\alpha_1, \alpha_2) &\mapsto &  \left\{ 
\begin{array}{ll}
	\alpha_1 & \text{ if } \, \alpha_1 = \alpha_2\\
	0 & \text{ otherwise}.
\end{array}
\right.
\end{array}
\end{eq}
The associated automaton will eliminate isolated elements, replacing them with $0$, and a maximal string of the form $\a \cdots \a \a$ is replaced with $\a \cdots \a 0$. For example, if $A = \{0, 1\}$, this gives:

\centerline{\includegraphics[scale=1]{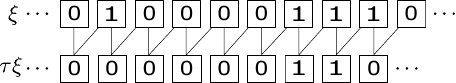}}

Therefore, as we iterate the automaton, the proportion of <<$0$>> increases as all other elements are gradually replaced by <<$0$>>. Heuristically, this indicates that the entropy of the factor $\s$-algebras $\s(\tau^{|n|})$ will go to zero as $n$ goes to infinity. But to state this rigorously, one need to specify the system $\bfY := (A^\bbZ, \B, \nu, S)$ on which we define $\F$. More accurately, it is the alphabet $A$ and the measure $\nu$ that need to be specified. However, the entropy $h_\nu(\F_n)$ goes to $0$ regardless of the choice of $A$ and $\nu$:
\begin{prop} \label{prop:general_entropy}
Let $\bfY := (A^{\bbZ}, \B, \nu, S)$, where $\nu$ is a $S$-invariant measure and let $\xi$ be the coordinate process on $\bfY$. For every $n \geq 1$, we have 
$$h_\nu(\tau^n\xi, S) \leq \frac{\log(\#A n^2)}{n}.$$
\end{prop}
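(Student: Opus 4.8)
The plan is to bound the Kolmogorov--Sinaï entropy of the process $\tau^n\xi$ by estimating the entropy of a single coordinate of $\tau^n\xi$ relative to a well-chosen family of ``helper'' coordinates of the original process $\xi$, and then invoke the Kolmogorov--Sinaï theorem together with subadditivity. The key structural observation about the automaton $\tau$ defined in \eqref{eq:local_function} is that $\tau$ is \emph{not} injective, but its failure of injectivity is controlled: given the value of $(\tau^n\xi)_0$ together with a modest amount of extra information, one can reconstruct a long block of the coordinates of $\xi$. Concretely, a coordinate $(\tau^n\xi)_j$ depends on the window $\xi_{[j, j+n]}$ (because the neighborhood is $\{0,1\}$ and we iterate $n$ times), so $(\tau^n\xi)_{[0, m[}$ is a function of $\xi_{[0, m+n[}$; conversely, a maximal run of a symbol $\a \neq 0$ in $\tau^n\xi$ tells us, up to a bounded ambiguity per run, the positions and values of the corresponding run in $\xi$. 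The point is that the automaton only ``destroys'' information at the boundaries of constant runs, and after $n$ steps the total amount of destroyed information per unit length is $O\!\big(\tfrac{\log(\#A\, n^2)}{n}\big)$.

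First I would set up the entropy estimate via the standard inequality $h_\nu(\tau^n\xi, S) = h_\nu(\s(\tau^n\xi), S) \le h_\nu(\s(\zeta), S)$ whenever $\s(\tau^n\xi) \subseteq \s(\zeta)$, and more usefully, use Fano's inequality (Lemma \ref{lem:Fano}) in the same spirit as the proof of Lemma \ref{lem:continuity_entropy}: if $\eta$ is an auxiliary coordinate-coded process such that $\xi_0$ is determined by $(\tau^n\xi, \eta)_{[-n, n]}$ outside an event of small probability, and $\eta_0$ takes few values, then $h_\nu(\xi, S) \le h_\nu(\tau^n\xi \vee \eta, S) \le h_\nu(\tau^n\xi, S) + h_\nu(\eta, S)$, which is the wrong direction. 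So instead I would go the \emph{direct} way: exhibit $(\tau^n\xi)_{[0, m[}$ as a function of $\xi_{[0, m+n[}$, giving the crude bound $H_\nu\big((\tau^n\xi)_{[0,m[}\big) \le H_\nu\big(\xi_{[0,m+n[}\big) \le (m+n)\log\#A$; dividing by $m$ and letting $m \to \infty$ recovers only $h_\nu(\xi, S) \le \log\#A$, which is too weak. The actual saving must come from the observation that after $n$ iterations the image process is heavily constrained: within any window of length $n$, the coordinates of $\tau^n\xi$ are not arbitrary — roughly, long runs of non-zero symbols cannot occur, so the number of possible values of $(\tau^n\xi)_{[0, N[}$ for $N$ a large multiple of $n$ is at most something like $(\#A\, n^2)^{N/n}$. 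I would make this combinatorial count precise: show that a word of length $N$ in the image of $\tau^n$ is determined by its restriction to a sub-grid of spacing $\approx n$ plus, for each such block, a bounded amount of ``run-length plus symbol'' data lying in a set of size $O(\#A\, n^2)$.

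The main step is therefore the combinatorial lemma: \emph{any word $w \in A^{[0, N[}$ that lies in the image of the $n$-fold iterate $\tau^n$ (restricted to $A^{[0, N+n[}$, say) is one of at most $(\#A\, n^2)^{\lceil N/n\rceil}$ possibilities.} To see this I would analyze the structure of $\tau^n(u)$ for $u \in A^{[0, N+n[}$: iterating $\tau$, each maximal constant run of length $r$ in $u$ shrinks by one per step and acquires a $0$ at its right end, while the $0$'s proliferate; after $n$ steps, the word $w$ is built out of $0$-blocks and short non-zero runs, and on each consecutive block of $n$ coordinates the restriction $w_{[kn, (k+1)n[}$ is determined by at most: the symbol starting that block (at most $\#A$ choices), and the position in $[0,n[ \times [0, n[$ of the run-boundaries inherited from $u$ (at most $n^2$ choices). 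This yields $\#\{w_{[kn,(k+1)n[}\} \le \#A\, n^2$ for each $k$, hence $H_\nu\big((\tau^n\xi)_{[0, N[}\big) \le \lceil N/n\rceil \log(\#A\, n^2)$. Dividing by $N$, sending $N \to \infty$ along multiples of $n$, and using $h_\nu(\tau^n\xi, S) = \lim_N \tfrac1N H_\nu\big((\tau^n\xi)_{[0,N[}\big)$ gives exactly $h_\nu(\tau^n\xi, S) \le \tfrac{\log(\#A\, n^2)}{n}$.

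The step I expect to be the main obstacle is pinning down the combinatorial count with the precise constant $n^2$ — i.e., correctly bookkeeping how the run-boundary data from $u$ propagates under $n$ iterations of $\tau$ and checking that each length-$n$ block of the output is genuinely reconstructible from at most $\#A \cdot n^2$ bits of data. One has to be careful that boundary effects (runs straddling block edges, the rightmost partial block) contribute only a bounded multiplicative overhead that is absorbed by the $\lceil\cdot\rceil$ and does not spoil the $1/n$ decay; I would handle this by working with $N$ a large multiple of $n$ and noting any $O(1)$-per-block slack only affects lower-order terms that vanish after dividing by $N$. Everything else — the reduction to counting words, the passage from Shannon entropy of long blocks to KS-entropy via the defining limit, and the fact that $\s(\tau^n\xi)$ has the same entropy as the process $\tau^n\xi$ by Theorem \ref{thm:kolmogorov-sinai} — is routine.
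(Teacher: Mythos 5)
Your overall strategy is the same as the paper's: bound the number of possible words of $\tau^n\xi$ on a window of length $n$ by a polynomial in $n$, then invoke subadditivity. You choose to bound $H_\nu\bigl((\tau^n\xi)_{[0,N[}\bigr)$ for large $N$ and divide, while the paper more directly uses $h_\nu(\tau^n\xi, S) \leq \tfrac{1}{n}H_\nu\bigl((\tau^n\xi)_{[0,n[}\bigr)$ and bounds the single window; both routes are fine and equivalent in effect. However, there is a genuine gap in the combinatorial core of your sketch. You justify the per-block count of $\#A\,n^2$ by asserting that ``long runs of non-zero symbols cannot occur'' and that $w$ is ``built out of $0$-blocks and short non-zero runs.'' That statement is false: a nonzero run in $\tau^n\xi$ can be arbitrarily long (if $\xi$ has a constant block of length $L > n$, the image has a constant block of length $L-n$). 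Your ``at most $n^2$ run-boundary positions'' count tacitly assumes at most two boundaries --- i.e.\ at most one nonzero run --- in each length-$n$ window, and that is precisely what you have not established.

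The correct structural fact, which is what the paper proves, is about \emph{separation}, not length: any two distinct nonzero runs of $\tau^n\xi$ are separated by a run of zeros of length at least $n$. The reason is that $(\tau^n\xi)_j = \alpha \neq 0$ forces $\xi_{[j, j+n]}$ to be constantly $\alpha$, and if two positions carrying nonzero symbols were too close, the corresponding constant windows of $\xi$ would overlap (or be adjacent), forcing the intermediate $\tau^n\xi$-values to be nonzero as well, contradicting the assumption of distinct runs. From this separation statement it follows that a window of length $n$ contains at most one nonzero run, and the word is then determined by the symbol of that run (at most $\#A - 1$ choices) and its endpoints (at most $n^2$ choices), giving $\#B_n \leq 1 + (\#A-1)n^2 \leq \#A\,n^2$. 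Once that lemma is in place, the rest of your argument --- the passage to Shannon entropy of blocks, the division by $n$, and the appeal to the KS definition --- is routine and correct, and your $N\to\infty$ version works just as well as the paper's one-window version. Also a small bookkeeping point: ``the symbol starting that block'' does not record the symbol of a nonzero run lying strictly inside a block that begins with $0$, so if you want to carry out the per-block encoding cleanly, encode the symbol of the (unique) nonzero run directly rather than the first symbol of the block.
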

\begin{proof}
Let $B_n \subset A^n$ be the set of values taken by $(\tau^n\xi)_{[0, n[}$. We know that 
$$H_\nu((\tau^n\xi)_{[0, n[}) \leq \log(\#B_n).$$
Because of the structure of $\tau$, in $\tau^n\xi$, for $\a \neq 0$, any run of <<$\a$>> is placed in between two runs of <<$0$>> of length at least $n+1$. Therefore, $(\tau^n\xi)_{[0, n[}$ is either a sequence of <<$0$>> or composed of one run of <<$\a$>> (with $\a \neq 0$) in between runs of <<$0$>>. So
$$\#B_n \leq 1 + (\#A -1)n^2 \leq \#A n^2.$$
In conclusion
$$h_\nu(\tau^n\xi, S) \leq \frac{1}{n} H_\nu((\tau^n\xi)_{[0, n[}) \leq \frac{\log(\#A n^2)}{n}.$$
\end{proof}

In Section \ref{sect:cellular_b_shift}, we deal with the case where $\bfY$ is a Bernoulli shift, and in Section \ref{sect:ornstein_k_non-b}, we deal with the case where $\bfY$ is Ornstein's example of a non-Bernoulli K-system from \cite{exemple_K-sys_non_bernoulli}. In both cases, by Proposition \ref{prop:general_entropy}, the entropy of the filtration generated by the cellular automaton goes to zero. Then we look at each example separately to show the more involved result: each $\F_{n+1}$ is relatively Bernoulli over $\F_n$. Therefore, we get two examples of weak Pinsker filtrations. 

It is interesting to note that those two filtrations are very similar in their construction, but the filtration (or any sub-sequence) on Ornstein's K-system cannot be of product type (otherwise, the system would be Bernoulli), we know from Theorem \ref{thm:unique_weak_pinsker_bernoulli} that the latter has a sub-sequence that is of product type. It shows that there can be subtle differences in the asymptotic structure of weak Pinsker filtrations.
 
\subsection{A cellular automaton on a Bernoulli shift}
\label{sect:cellular_b_shift}

Here, we consider a Bernoulli shift $\bfY := (A^\bbZ, \B, \nu, S)$ where $\nu$ is a product measure. To avoid unnecessarily complicated notations, we will also assume that $A = \{0, 1\}$ and $\nu := (\frac{1}{2}(\delta_0 + \delta_1))^{\otimes \bbZ}$. Therefore, the local function \eqref{eq:local_function} becomes:
$$
\begin{array}{cccc}
	\tau_0: & \{0, 1\}^2 &\to & \{0, 1\} \\
	& \alpha &\mapsto &  \left\{ 
\begin{array}{ll}
	1 & \text{ if } \, \alpha = (1, 1)\\
	0 & \text{ otherwise}.
\end{array}
\right.
\end{array}
$$
And we study the corresponding automaton:
$$\begin{array}{cccc}
	\tau: & \{0, 1\}^\bbZ & \to & \{0, 1\}^\bbZ\\
	& (a_n)_{n \in \bbZ} & \mapsto & (\tau_0(a_n, a_{n+1}))_{n \in \bbZ}\\
\end{array}$$
\noindent The automaton replaces an isolated <<$1$>> with a <<$0$>> and reduces sequences of <<$1$>>  by replacing the final one by a <<$0$>>.

\begin{thm} \label{thm:bernoulli_weak_pinsker}
On the system $\bfY := (\{0, 1\}^{\bbZ}, \B, \nu, S)$, the filtration given by $\F := (\s(\tau^{|n|}))_{n \leq 0}$ is a weak Pinsker filtration. That is, for every $n \leq -1$, $\F_{n+1}$ is relatively Bernoulli over $\F_{n}$ and we have
\begin{eq} \label{eq:limite_entropie}
h_\nu(\F_n) \underset{n \rightarrow - \infty}{\to} 0.
\end{eq}
\end{thm}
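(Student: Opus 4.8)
The convergence \eqref{eq:limite_entropie} is immediate from Proposition \ref{prop:general_entropy}: taking $\#A = 2$ gives $h_\nu(\F_n) = h_\nu(\tau^{|n|}\xi, S) \leq \log(2n^2)/|n| \to 0$ as $n \to -\infty$. So the whole content is the relative Bernoullicity of $\F_{n+1}$ over $\F_n$ for $n \leq -1$, and the plan is to reduce it to a single "renewal-type" picture. The starting point is the observation that, by an immediate induction on $k$ (using $(\tau a)_m = a_m \wedge a_{m+1}$), one has $(\tau^k\xi)_m = 1$ if and only if $\xi_m = \xi_{m+1} = \dots = \xi_{m+k} = 1$; in other words $\tau^k\xi$ records exactly the left endpoints of the runs of $1$'s of $\xi$ of length $\geq k+1$. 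Fixing $n \leq -1$ and writing $\eta := \tau^{|n|-1}\xi$, we have $\F_{n+1} = \s(\eta)$ and $\F_n = \s(\tau\eta)$, so it suffices to prove that, inside the finite-entropy factor $\quotient{\bfY}{\F_{n+1}}$, the $\sigma$-algebra $\s(\eta)$ is relatively Bernoulli over $\s(\tau\eta)$.

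Next I would work out the conditional structure of $\eta$ given $\tau\eta$. Knowing $\tau\eta$ amounts to knowing where the $(\geq|n|)$-runs of $\xi$ sit; a short case analysis shows that each maximal run of $1$'s of $\tau\eta$ forces $\eta$ on a slightly larger interval (the run, fattened by one symbol on each side), and that between two consecutive such fattened blocks the only freedom left for $\eta$ is a string with no two consecutive $1$'s whose length is that of the intervening $0$-run of $\tau\eta$ minus two; moreover, under $\nu$, the fillings of distinct $0$-runs are, conditionally on $\tau\eta$, independent, and each uniform over the admissible strings. (For $n = -1$ this is the description of $\xi$ given $\tau\xi$; for general $n$ it is the analogous description for $\eta = \tau^{|n|-1}\xi$, which has the same renewal-type structure.) Thus $\eta$ is recovered from $\tau\eta$ by adjoining local, conditionally i.i.d.\ information supported on the $0$-runs of $\tau\eta$.

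I would then deduce relative Bernoullicity from this picture by verifying that $\eta$ is relatively very weak Bernoulli over $\tau\eta$ in the sense of Definition \ref{def:VWB_rel}, and invoking Theorem \ref{thm:rfd_eq_rvwb} together with Lemma \ref{lem:factors_rel_B_are_B}(i). Concretely: conditioning on a large window $\mathbf{b}_{[-k,k]}$ of $\tau\eta$ already pins down $\eta_{[0, \ell[}$, up to an error exponentially small in $k$, as a deterministic function of the positions of the (finitely many) $0$-runs of $\tau\eta$ meeting $[0, \ell[$ and of the conditionally uniform, independent fillings of those runs; conditioning in addition on a past $\mathbf{a}_{[-m,0[}$ of $\eta$ only further determines, inside $[0, \ell[$, the part lying in the $0$-run of $\tau\eta$ that straddles $0$. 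Since $\tau\eta$ is a finite factor of an i.i.d.\ process, its $0$-runs have exponentially decaying length, so the event that this straddling run occupies more than an $\eps$-fraction of $[0, \ell[$ has $\nu$-probability tending to $0$ as $\ell \to \infty$, uniformly in $k$ and $m$; off that event the two conditional laws of $\eta_{[0, \ell[}$ coincide outside a set of coordinates of density $< \eps$, hence are within $\eps$ for $\bar{d_\ell}$. Integrating over $(\mathbf{a}, \mathbf{b})$ gives the inequality required by Definition \ref{def:VWB_rel}, for every $m$ and every large $k$.

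The main obstacle is the last step: turning the above into a genuinely uniform $\bar{d_\ell}$-bound requires a clean quantitative handle on the length of the $0$-run of $\tau\eta$ through the origin — its exponential tail, together with the fact that a window of size $k \gg \ell$ already sees both of its endpoints with overwhelming probability, so that conditioning on the window is, for the purpose of $\eta_{[0,\ell[}$, as good as conditioning on all of $\tau\eta$. An alternative, more hands-on route would be to construct the Bernoulli complement of $\F_n$ directly from the fillings described above; the difficulty there is precisely that those fillings live on variable-length intervals and do not by themselves form an i.i.d.\ process independent of $\F_n$, so one must still re-encode them — which is exactly what the relative Ornstein–Thouvenot machinery (Proposition \ref{prop:thouvenot_2}, Proposition \ref{prop:thouvenot_3}, or Lemma \ref{lem:factors_rel_B_are_B}) supplies once relative very weak Bernoullicity is in hand.
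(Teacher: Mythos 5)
Your handling of \eqref{eq:limite_entropie} via Proposition \ref{prop:general_entropy} matches the paper. For the relative Bernoullicity, however, you and the paper part ways at a decisive point, and your route has a gap that the paper's route is specifically designed to avoid.

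The paper does \emph{not} try to describe the conditional law of $\F_{n+1}$ given $\F_n$ directly. Instead, Proposition \ref{prop:VWB_rel} establishes that the i.i.d.\ coordinate process $\xi$ itself is relatively very weak Bernoulli over $\tau^{|n|}\xi$ for every $n$, and then part (iii) of Lemma \ref{lem:factors_rel_B_are_B} is invoked with $\B = \F_n$, $\C = \F_{n+1}$ to conclude that $\F_n \vee \F_{n+1} = \F_{n+1}$ is relatively very weak Bernoulli (hence, by part (i), relatively Bernoulli) over $\F_n$. The only dynamical input needed is the clean observation that, on the event $\{(\tau^{|n|}\xi)_i = 1\}$, i.e.\ $\xi_i = \cdots = \xi_{i+|n|} = 1$, the pair $(\xi_{]-\infty,i[}, (\tau^{|n|}\xi)_{]-\infty,i[})$ is $\xi_{]-\infty,i[}$-measurable while $(\xi_{]i,\infty[}, (\tau^{|n|}\xi)_{]i,\infty[})$ is $\xi_{]i+|n|,\infty[}$-measurable; since the coordinates of $\xi$ are genuinely independent, Lemma \ref{lem:conditional_indep} turns this into exact conditional independence of past and future of $\xi$ given $\tau^{|n|}\xi$, and the rest is a soft law-of-large-numbers bound on the chance of seeing a $1$ in $(\tau^{|n|}\xi)_{[0,\ell_0[}$.

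Your proposal instead sets $\eta := \tau^{|n|-1}\xi$ and tries to prove directly that $\eta$ is relatively very weak Bernoulli over $\tau\eta$, by describing the conditional law of $\eta$ given $\tau\eta$ as a ``renewal'' product of uniform fillings on the $0$-runs. The gap is precisely there: for $|n| \geq 2$ the process $\eta$ is \emph{not} i.i.d., and the statement that, conditionally on $\tau\eta$, the fillings of distinct $0$-runs are independent and uniform over admissible strings is asserted (``which has the same renewal-type structure'') rather than proved. The mechanism that makes the paper's decoupling immediate — the independence of distinct coordinates of $\xi$ — is no longer available for $\eta$, whose coordinates overlap on windows of $\xi$ of length $|n|$; so the conditional independence across the $0$-runs of $\tau\eta$, and the exact form of the conditional law there, both require an argument. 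You flag exactly this difficulty (``a clean quantitative handle on the length of the $0$-run of $\tau\eta$ through the origin\ldots'') at the end, and the honest fix is exactly the paper's move: drop down to $\xi$ where the independence is trivial, prove the RVWB property there, and let the abstract lemma transfer it to $\F_{n+1}$ over $\F_n$. Your description of the local structure is a useful picture, but as a proof it would still have to be grounded in the $\xi$-level decoupling and then pushed through the local map $\tau^{|n|-1}$ — at which point you have reconstructed the paper's argument with extra bookkeeping.
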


The convergence of the entropy follows from Proposition \ref{prop:general_entropy}. However, when $\bfY$ is a Bernoulli shift, we can compute a better bound, as stated in Proposition \ref{prop:majoration_entropie}.

\begin{prop} \label{prop:majoration_entropie}
Let $\xi$ denote the coordinate process on $\bfY$. For every $n \geq 0$, we have
$$h_\nu(\tau^n\xi, S) \leq 3 \log(2) 2^{-n/2}.$$
\end{prop}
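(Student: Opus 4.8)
The plan is to reduce the entropy‑rate bound to a one‑coordinate estimate, and then to an elementary numerical inequality. The starting point is the remark that, for the fair product measure, the local map \eqref{eq:local_function} is just multiplication: writing $\xi$ for the coordinate process on $\bfY$, an easy induction on $n$ gives $(\tau^n\xi)_k = \xi_k\xi_{k+1}\cdots\xi_{k+n}$ for every $k$, so that $(\tau^n\xi)_k = 1$ precisely when $\xi_k = \xi_{k+1} = \cdots = \xi_{k+n} = 1$. Since $\nu$ is the fair Bernoulli measure, each coordinate of $\tau^n\xi$ is then a two‑valued random variable with $\nu((\tau^n\xi)_k = 1) = 2^{-(n+1)}$.

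First I would use subadditivity of Shannon entropy together with the $S$‑invariance of $\nu$: $H_\nu((\tau^n\xi)_{\llbracket 0, N\rrbracket}) \le \sum_{j=0}^{N} H_\nu((\tau^n\xi)_j) = (N+1)\,H_\nu((\tau^n\xi)_0)$, and hence, dividing by $N$ and letting $N \to \infty$, $h_\nu(\tau^n\xi, S) \le H_\nu((\tau^n\xi)_0)$.

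Next I would estimate the right‑hand side. For $n \ge 1$ one has $2^{-(n+1)} \le 1/4 < e^{-1}$, so the second part of Fano's inequality (Lemma \ref{lem:Fano}), applied to the binary random variable $\chi = (\tau^n\xi)_0$ with distinguished value $0$ and $\varepsilon = 2^{-(n+1)}$, gives
$$H_\nu((\tau^n\xi)_0) \le 2^{-(n+1)}\bigl(1 + (n+1)\log 2\bigr).$$
(Alternatively one can bypass Fano and bound the binary entropy function $H(p) = -p\log p - (1-p)\log(1-p)$ at $p = 2^{-(n+1)}$ directly, using $-(1-p)\log(1-p) \le p$, to reach the same estimate.) For $n = 0$ there is nothing to prove, since $h_\nu(\tau^0\xi, S) = h(\bfY) = \log 2 \le 3\log 2$.

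Finally it remains to verify the elementary inequality $2^{-(n+1)}\bigl(1+(n+1)\log 2\bigr) \le 3\log(2)\,2^{-n/2}$ for $n \ge 1$, equivalently $1 + (n+1)\log 2 \le 6\log(2)\,2^{n/2}$: this holds at $n = 1$, and the difference of the two sides is increasing in $n$ because passing from $n$ to $n+1$ increases the right side by $6\log(2)\,2^{n/2}(\sqrt 2 - 1) \ge 6\log(2)(\sqrt 2 - 1) > \log 2$ and the left side by only $\log 2$. I do not expect any genuine obstacle here; the only points requiring a little care are the applicability condition $\varepsilon < e^{-1}$ in Fano's inequality — which is what forces the separate (trivial) treatment of $n=0$ — and the routine bookkeeping in the last elementary estimate.
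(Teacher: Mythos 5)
Your proof is correct, and it takes a genuinely different (and in fact tighter) route than the paper's. The paper fixes $k_n = \lceil n/2 \rceil$, observes that the whole block $(\tau^n\xi)_{[0,k_n]}$ equals the all-zero word unless $\xi_{[k_n,n]}=(1,\dots,1)$ — an event of probability at most $2^{-n/2}$ — and then applies Fano's inequality to the block random variable $(\tau^n\xi)_{[0,k_n]}$ (taking values in a set of size $2^{k_n+1}$) before dividing by the block length $k_n+1$. You instead reduce to a single coordinate via the standard subadditivity bound $h_\nu(\tau^n\xi,S)\le H_\nu((\tau^n\xi)_0)$, note that $(\tau^n\xi)_0$ is Bernoulli with success probability $2^{-(n+1)}$, and then apply Fano (or a direct binary-entropy bound) to that one coordinate. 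Your argument actually establishes the stronger estimate $h_\nu(\tau^n\xi,S)\le 2^{-(n+1)}\bigl(1+(n+1)\log 2\bigr)$, which decays like $n\,2^{-n}$, comfortably better than the stated $3\log(2)\,2^{-n/2}$; the final elementary verification that this implies the claimed bound is correct as written, and your separate handling of $n=0$ (where Fano's hypothesis $\varepsilon<e^{-1}$ fails) is appropriate.
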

\begin{proof}
Let $n \geq 0$. One can see that $\tau^n\xi$ is $1$ at $i$ if and only if $\xi$ is $1$ over the entire segment $[i, i+n]$, as shown below:

\centerline{\includegraphics[scale=0.17]{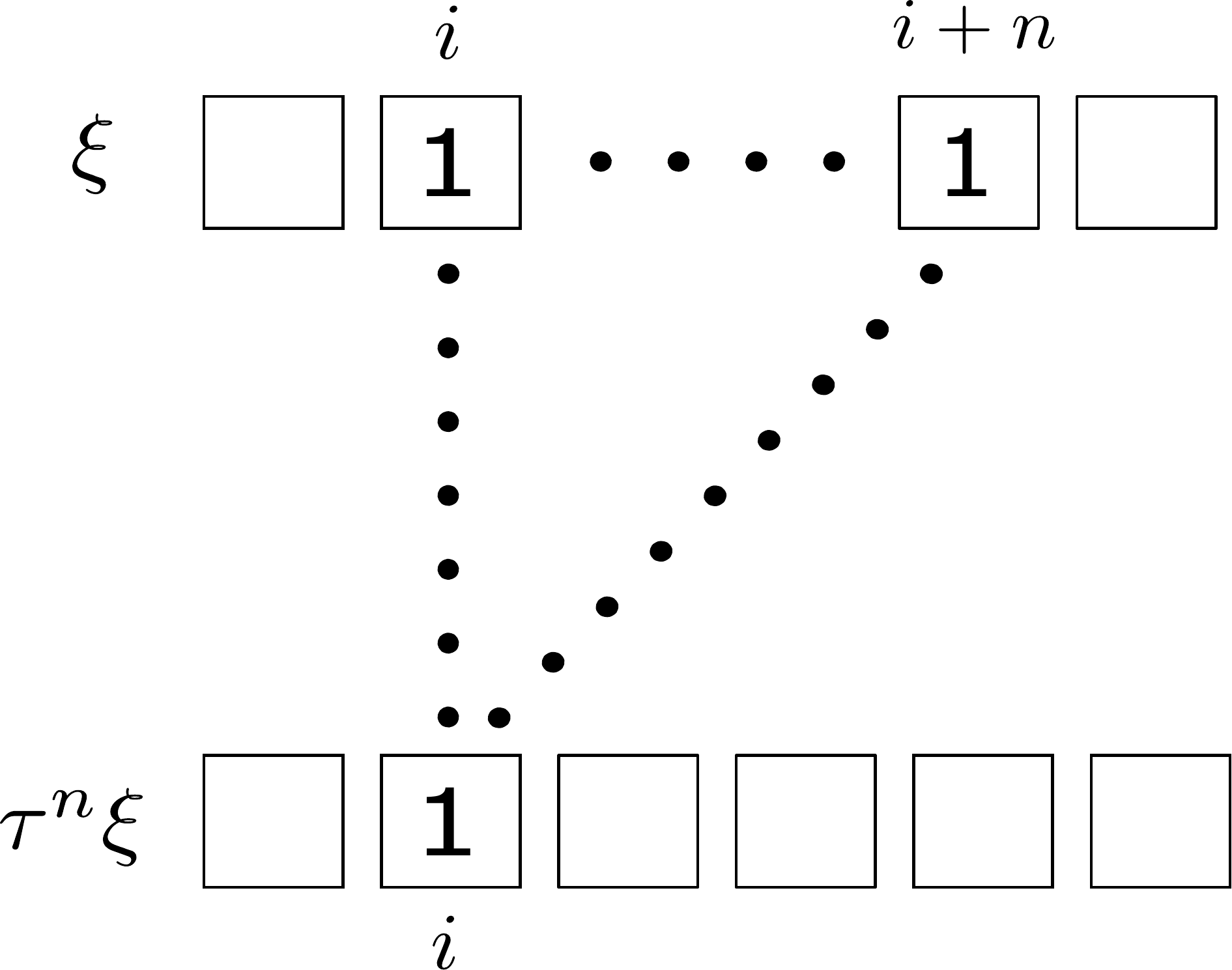}}

We set $k_n := \lceil n/2 \rceil$, and we remark that
$$\nu(\{\exists i \in [0, k_n], (\tau^n\xi)_i = 1\}) \leq \nu(\{\xi_{[k_n, n]} = (1, ..., 1)\}) \leq 1/2^{n-k_n+1} \leq 1/2^{n/2}.$$

Then, combining this with Fano's inequality (Lemma \ref{lem:Fano}) we get
$$H_\nu((\tau^n\xi)_{[0, k_n]}) \leq 2^{-n/2} (1 + \log(2^{k_n+1}) + \log(2^{n/2})) \leq 2^{-n/2} 3 (k_n+1) \log(2),$$
and we can conclude for the KS-entropy:
$$h_\nu(\tau^n\xi, S) \leq \frac{1}{k_n+1} H_\nu((\tau^n\xi)_{[0, k_n]}) \leq 3 \log(2) 2^{-n/2}.$$
\end{proof}

In addition, we give the following simple lemma on conditional independence:
\begin{lem} \label{lem:conditional_indep}
Let $(X, \A, \mu)$ be a probability space and $\scrZ$ a sub-$\s$-algebra. Let $A$, $B$, $U$ and $V$ be random variables such that 
$$(A, U) \indep_\scrZ (B, V).$$
Then we have
\begin{align*}
\calL(A, B \, | \, U, V, \scrZ) &= \calL(A \, | \, U, \scrZ) \otimes \calL(B \, | \, V, \scrZ)\\
&= \calL(A \, | \, U, V, \scrZ) \otimes \calL(B \, | \, U, V, \scrZ)
\end{align*}
\end{lem}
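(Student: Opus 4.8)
The plan is to derive both displayed equalities from the single hypothesis $(A,U) \indep_{\scrZ} (B,V)$ by repeated use of the definition of relative independence together with the tower property of conditional expectation. I would set things up measure-theoretically: to identify $\calL(A,B \mid U,V,\scrZ)$ it suffices to compute, for all bounded measurable test functions $f$ of $A$ and $g$ of $B$, the conditional expectation $\bbE[f(A)g(B) \mid U,V,\scrZ]$, and to show it factors as the product of the appropriate one-variable conditional laws evaluated at the test functions.

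First I would prove the first equality. Let $f,g$ be bounded measurable. Since $(A,U)$ and $(B,V)$ are relatively independent over $\scrZ$, and $f(A)$ is $\s(A,U)$-measurable while $g(B)$ is $\s(B,V)$-measurable, the definition of $\indep_\scrZ$ gives $\bbE[f(A)g(B) \mid \scrZ] = \bbE[f(A) \mid \scrZ]\,\bbE[g(B)\mid\scrZ]$ a.s. But I need the conditioning to be on $(U,V,\scrZ)$, not just $\scrZ$. The key point is that, more generally, for any $\s(U)$-measurable bounded $u$ and $\s(V)$-measurable bounded $v$, the random variables $f(A)u(U)$ and $g(B)v(V)$ are again $\s(A,U)$- and $\s(B,V)$-measurable respectively, so $\bbE[f(A)u(U)g(B)v(V)\mid\scrZ] = \bbE[f(A)u(U)\mid\scrZ]\,\bbE[g(B)v(V)\mid\scrZ]$. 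I would then take $\bbE[f(A)\mid U,\scrZ]$ and $\bbE[g(B)\mid V,\scrZ]$ as candidates for the factors of $\bbE[f(A)g(B)\mid U,V,\scrZ]$, and check the defining property: their product is $\s(U,V,\scrZ)$-measurable, and testing against $u(U)v(V)w$ with $w$ being $\scrZ$-measurable reduces, via the tower property and the factorization just displayed, to $\bbE[f(A)u(U)w\mid\scrZ]\,\bbE[g(B)v(V)\mid\scrZ]$-type products, which match. Since functions of the form $u(U)v(V)w$ generate $\s(U,V,\scrZ)$ and form a $\pi$-system, this identifies the conditional law and proves $\calL(A,B\mid U,V,\scrZ) = \calL(A\mid U,\scrZ)\otimes\calL(B\mid V,\scrZ)$.

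For the second equality I would show $\calL(A\mid U,\scrZ) = \calL(A\mid U,V,\scrZ)$ and symmetrically $\calL(B\mid V,\scrZ)=\calL(B\mid U,V,\scrZ)$; combined with the first equality this gives the second. To see $\bbE[f(A)\mid U,V,\scrZ] = \bbE[f(A)\mid U,\scrZ]$, note the right side is $\s(U,V,\scrZ)$-measurable, so it is enough to check $\bbE[f(A)\,u(U)v(V)w] = \bbE[\bbE[f(A)\mid U,\scrZ]\,u(U)v(V)w]$ for $u,v,w$ as above. The left side equals $\bbE[\bbE[f(A)u(U)v(V)\mid\scrZ]\,w] = \bbE[\bbE[f(A)u(U)\mid\scrZ]\,\bbE[v(V)\mid\scrZ]\,w]$ by relative independence (taking $g\equiv 1$), and the right side, using that $\bbE[f(A)\mid U,\scrZ]\,u(U)$ is $\s(A,U)$-measurable, equals $\bbE[\bbE[f(A)\mid U,\scrZ]u(U)\,\bbE[v(V)\mid\scrZ]\,w] = \bbE[\bbE[f(A)u(U)\mid\scrZ]\,\bbE[v(V)\mid\scrZ]\,w]$ again by relative independence; the two agree.

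I expect the main obstacle to be purely bookkeeping: being careful that all the $\pi$-systems of test functions used really do generate the relevant $\s$-algebras so that the conditional laws (regular conditional probabilities, which exist since the space is Lebesgue) are determined, and keeping straight which random variable each test function is attached to when repeatedly invoking the definition of $\indep_\scrZ$. There is no deep idea here — it is the standard "conditional independence implies one can drop or add independent conditioning variables" manipulation — so the write-up mostly consists in choosing a clean $\pi$-system and applying the tower property carefully.
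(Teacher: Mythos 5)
Your proposal is correct and takes essentially the same approach as the paper: both arguments establish the identity by evaluating against product test functions and chaining the definition of relative independence with the tower property, your second equality amounting to setting $g\equiv 1$ in the paper's displayed computation. The paper's version is just a more compact presentation of the same three-line calculation.
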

\begin{proof}
It follows from the fact that if $A'$, $B'$, $U'$ and $V'$ are respectively $A$, $B$, $U$ and $V$-measurable random variables:
\begin{align*}
\bbE[A' \cdot B' \cdot U' \cdot V' \, | \, \scrZ] &= \bbE[A' \cdot U' \, | \, \scrZ] \cdot \bbE[ B' \cdot V' \, | \, \scrZ]\\
&= \bbE[\bbE[A' \, | \, U, \scrZ] \cdot U' \, | \, \scrZ] \cdot \bbE[\bbE[B' \, | \, V, \scrZ] \cdot V' \, | \, \scrZ]\\
&= \bbE[\bbE[A' \, | \, U, \scrZ] \cdot U' \cdot \bbE[B' \, | \, V, \scrZ] \cdot V' \, | \, \scrZ]. 
\end{align*}
\end{proof}

\begin{prop} \label{prop:VWB_rel}
Let $\xi$ be the coordinate process on $\bfY$. For every $n \geq 1$, $\xi$ is relatively very weak Bernoulli over $\tau^n\xi$.
\end{prop}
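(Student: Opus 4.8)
Write $\eta := \tau^n \xi$, so that $\eta_i = 1$ precisely when $\xi_{[i,i+n]} = (1,\dots,1)$; in particular a run of $1$'s of length $r$ in $\eta$ forces a run of $1$'s of length $r+n$ in $\xi$, an event of probability $2^{-(r+n)}$. Unwinding Definition~\ref{def:VWB_rel}, the plan is to produce, for every $\varepsilon>0$, some $\ell$ such that for every $m\ge1$ and all large $k$ the average over $(\xi,\eta)$ of $\bar{d}_\ell\bigl(\calL(\xi_{[0,\ell[}\mid \xi_{[-m,0[},\eta_{[-k,k]}),\ \calL(\xi_{[0,\ell[}\mid\eta_{[-k,k]})\bigr)$ is at most $\varepsilon$.

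The first step is to reduce the past to a bounded piece. Condition in addition on $\xi_{[-n,0[}=u$. Given $u$, each of the $n$ ``straddling'' values $\eta_{-j}=\xi_{-j}\cdots\xi_{-1}\cdot\xi_{0}\cdots\xi_{n-j}$, $1\le j\le n$, is a fixed function of $\xi_{[0,n-1]}$ alone, so observing them merely imposes a constraint $\xi_{[0,n-1]}\in A$ for a nonempty $A\subseteq\{0,1\}^{n}$ depending on $u$ and on the observed straddling values; whereas $\eta_{[-k,-n-1]}$ and $\xi_{[-m,0[}$ together are $\s(\xi_{[-k,-n-1]})$-measurable. Since $\xi$ is i.i.d., $\s(\xi_{[-k,-n-1]})$ is independent of $\s(\xi_{\ge0})$ and of $\s(\xi_{[-n,0[})$, so Lemma~\ref{lem:conditional_indep} (with the trivial second pair) yields, on $\{\xi_{[-n,0[}=u\}$,
$$\calL(\xi_{[0,\ell[}\mid \xi_{[-m,0[},\eta_{[-k,k]})=\calL\bigl(\xi_{[0,\ell[}\mid \xi_{[0,n-1]}\in A,\ \eta_{[0,k]}\bigr),$$
and likewise $\calL(\xi_{[0,\ell[}\mid\eta_{[-k,k]})=\sum_{u'}\nu(\xi_{[-n,0[}=u'\mid\eta_{[-k,k]})\,\calL(\xi_{[0,\ell[}\mid \xi_{[0,n-1]}\in A_{u'},\eta_{[0,k]})$, all the $A_{u'}$ being cut out by the \emph{same} observed straddling values. (In passing this shows the integrand is unchanged for $m\ge n$, and joint convexity of $\bar{d}_\ell$ reduces $m<n$ to it, so it suffices to treat $m=n$.) By that same convexity, the integrand is at most
$$\max\Bigl\{\bar{d}_\ell\bigl(\calL(\xi_{[0,\ell[}\mid \xi_{[0,n-1]}\in A,\eta_{[0,k]}),\ \calL(\xi_{[0,\ell[}\mid \xi_{[0,n-1]}\in A',\eta_{[0,k]})\bigr)\ :\ A,A'\ \text{nonempty}\Bigr\},$$
so everything reduces to showing that, on $\bar{d}_\ell$-average over $\eta_{[0,k]}$, conditioning the law of $\xi_{[0,\ell[}$ given $\eta_{[0,k]}$ on the values of its first $n$ coordinates barely changes it.

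The second step is a coupling. One checks that $\calL(\xi_{[0,k+n]}\mid\eta_{[0,k]}=\mathbf{c})$ is a Markov measure of memory $n$ (a product measure conditioned on events of range $n+1$) and, concretely, that for fixed $\mathbf{c}$ the sequence $\xi$ is deterministic on the ``forced blocks'' associated to the runs of $1$'s of $\mathbf{c}$ — a block of $1$'s flanked by forced $0$'s — while on the complementary ``free'' intervals it is an independent copy of the uniform measure on the subshift of words with no run of $1$'s of length $>n$, which is irreducible and aperiodic, hence uniformly geometrically mixing at a rate depending only on $n$. I would then run two copies $\xi^{A},\xi^{A'}$, started respectively from $\xi_{[0,n-1]}\in A$ and $\xi_{[0,n-1]}\in A'$ and evolving for indices $\ge n$ under the \emph{common} transition kernel (which, by the memory-$n$ property, does not see the constraint), and couple them by a standard Doeblin-type argument so that they coalesce — and thereafter coincide — at a time $\sigma$ which is at most $n$ plus either the length of the first forced block (where both copies are pinned to the same word) or a geometric variable supplied by the mixing of the free subshift. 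Because a large $\sigma$ forces a long run of $1$'s of $\eta$ near the origin, one gets $\bbE[\sigma]\le C(n)$ uniformly in $\ell$, $k$, $A$ and $A'$ and in average over $\mathbf{c}$; hence the $\bar{d}_\ell$-average above is $\le\bbE[\sigma]/\ell\le C(n)/\ell$, which is $\le\varepsilon$ once $\ell$ is chosen large in terms of $n$ and $\varepsilon$.

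The hard part will be this last step: turning the ``forced blocks plus mixing free subshift'' picture of $\calL(\xi\mid\eta_{[0,k]})$ into a rigorous coalescing coupling and, above all, bounding the expected coalescence time \emph{uniformly in $k$ and in the conditioning word} — which is exactly where the special arithmetic of the automaton $\tau$ (long $1$-runs of $\eta=\tau^{n}\xi$ being exponentially rare) is needed.
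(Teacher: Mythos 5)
Your plan has the right central insight — that $\eta_i=1$ forces $\xi_{[i,i+n]}=(1,\dots,1)$ and hence acts as a decoupling point for the i.i.d.\ process — but you bury it inside a Markov--Doeblin coupling machine, whereas the paper uses it in a much more elementary ``renewal'' form that entirely avoids the hard part you flag at the end. The paper's proof is: by the law of large numbers choose $\ell_0$ so that with probability $\ge 1-\varepsilon$ some $i\in[0,\ell_0[$ has $\eta_i=1$; set $\ell:=\lceil 1/\varepsilon\rceil\ell_0$; then on any $\eta$-cylinder exhibiting such an $i$, given $\{\eta_i=1\}$ the pair $(\xi_{]-\infty,i[},\eta_{]-\infty,i[})$ is $\xi_{]-\infty,i[}$-measurable and $(\xi_{]i,\infty[},\eta_{]i,\infty[})$ is $\xi_{]i+n,\infty[}$-measurable, so Lemma~\ref{lem:conditional_indep} gives conditional independence of $\xi_{[-m,0[}$ and $\xi_{[\ell_0,\ell[}$ given $\eta_{[-k,k]}$. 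Consequently the two conditional laws $\lambda_\ell(\cdot\mid a_{[-k,k]},b_{[-m,0]})$ and $\lambda_\ell(\cdot\mid a_{[-k,k]})$ share the same marginal on the block $[\ell_0,\ell[$, so the relative product couples them with $\bar d_\ell\le\ell_0/\ell\le\varepsilon$ on that event, and $\le 1$ on the bad event; total $\le 2\varepsilon$. No Markov-chain analysis and no expected-coalescence-time estimate are needed.

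Two remarks on the gap in your sketch. First, it is genuine: to get a uniform bound $\mathbb{E}[\sigma]\le C(n)$ you have to control the coupling in the ``free'' stretches where $\mathbf{c}$ has no $1$'s, and these stretches are the \emph{typical} case, not the rare one (your line ``a large $\sigma$ forces a long run of $1$'s of $\eta$'' should read a long run of $0$'s; long $1$-runs of $\eta$ are where the coupling is trivial). Making the Doeblin argument work in those free stretches, uniformly over their length and over boundary effects in the conditioned Markov measure, is nontrivial and is exactly what you would have to write out. Second, your step-1 reduction to a past of length $n$ is correct (the conditional-independence bookkeeping via the straddling $\eta$-values is sound), but it is unnecessary for the paper's argument: once a decoupling $\eta_i=1$ sits in $[0,\ell_0[$, the entire past $\xi_{[-m,0[}$ is conditionally independent of $\xi_{[\ell_0,\ell[}$ at once, with no need to truncate it first. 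So your route is more general in spirit (it would still work if $\eta_i=1$ were less rare or if you wanted quantitative rates), but for this specific statement it trades a one-line renewal argument for a coupling estimate you have not yet supplied.
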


\begin{proof}
Set $\eta := \tau^n\xi$. Relative very weak Bernoullicity was defined in Definition \ref{def:VWB_rel}. We recall some notation: take $\lambda \in \P(\{0, 1\}^\bbZ \times \{0, 1\}^\bbZ)$ to be the law of $(\eta, \xi)$, and for $I, J \subset \bbZ$ and $a, b \in \{0, 1\}^\bbZ$, $\lambda_\ell(\cdot \, | \, a_I, b_J)$ is the conditional law of $\xi_{[0, \ell[}$ given that $\eta_I = a_I$ and $\xi_J = b_J$.

Let $\eps > 0$. We need to show that there exists $\ell \geq 1$ such that for every $m \geq 1$ and for $k \geq 1$ large enough, we have
\begin{eq} \label{eq:rel_vwb_proof}
\int \bar{d_\ell}\left(\lambda_\ell(\cdot \, | \, a_{[-k, k]}, b_{[-m, 0]}), \lambda_\ell(\cdot \, | \, a_{[-k, k]})\right) d\lambda(a, b) \leq \varepsilon.
\end{eq}

Let $m \geq 1$. We start by noting that there must be some <<$1$>> that appears in $\eta$: indeed, the law of large numbers tells us that there exists $\ell_0 \geq 1$ such that
\begin{eq} \label{eq:existence_blocs}
\mu(\underset{:= A}{\underbrace{\{\exists i \in [0, \ell_0[ \, ; \; \eta_i =1\}}}) \geq 1 - \varepsilon.
\end{eq}
We then set $\ell := \lceil \frac{1}{\varepsilon} \rceil \ell_0$. Next, we take $k \geq \ell_0$ so that $\eta_{[-k, k]}$ determines entirely $A$. 

We fix $i \in [0, \ell_0[$. First, we note that, as we can see on the following image

\centerline{\includegraphics[scale=0.06]{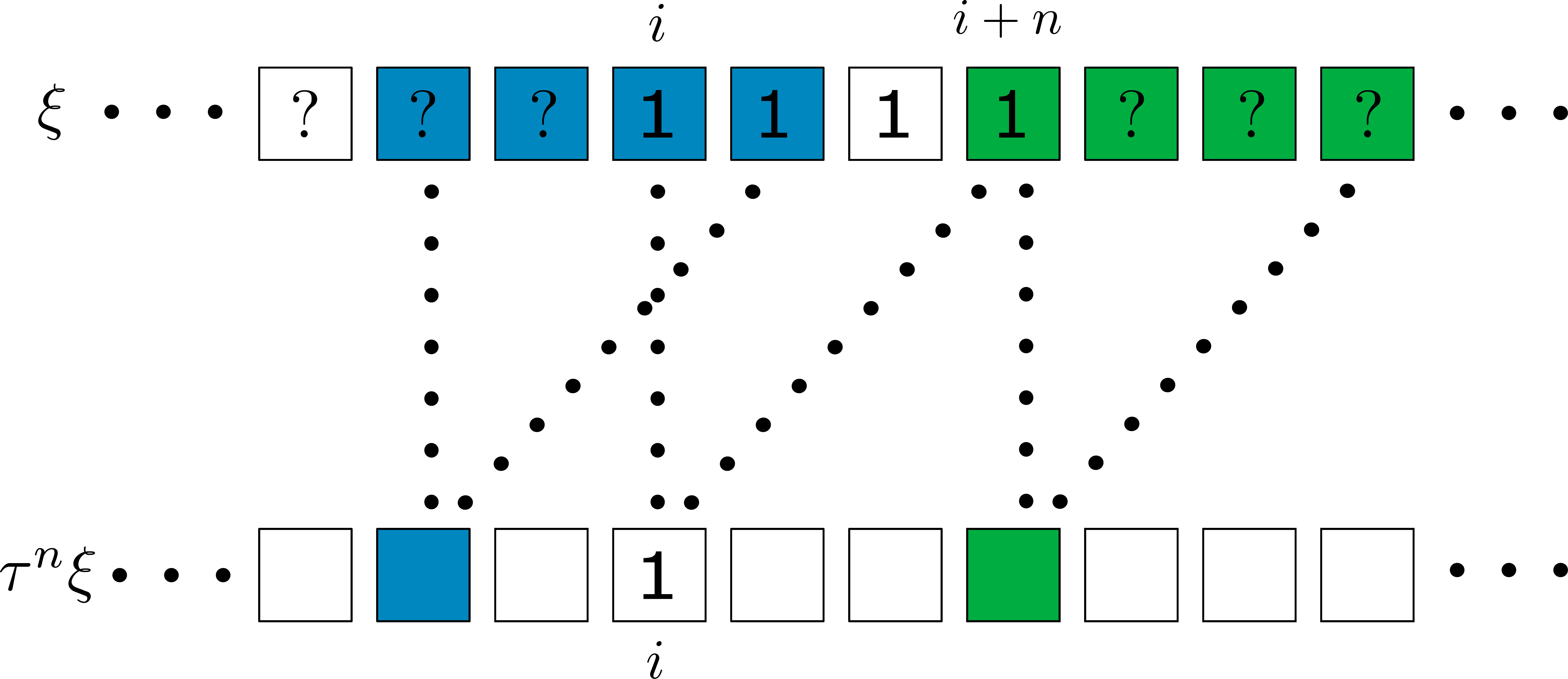}}
\noindent if $\eta_i = 1$, then $(\xi_{]-\infty, i[}, \eta_{]-\infty, i[})$ is $\xi_{]-\infty, i[}$-measurable and $(\xi_{]i, \infty[}, \eta_{]i, \infty[})$ is $\xi_{]i+n, \infty[}$-measurable. So, since the variables $\{\xi_j\}_{j \in \bbZ}$ are independent, given $\{\eta_i=1\}$ the variables $(\xi_{]-\infty, i[}, \eta_{]-\infty, i[})$ and $(\xi_{]i, \infty[}, \eta_{]i, \infty[})$ are independent. Finally, using Lemma \ref{lem:conditional_indep}, for $a \in A^{\bbZ}$ such that $a_i = 1$, we get:
\begin{align*}
\calL(\xi_{]-\infty, i[}, \xi_{]i, \infty[} \, | \, &\eta_{[-k, k]} = a_{[-k, k]})  = \calL(\xi_{]-\infty, i[} \, | \, \eta_{[-k, i[} = a_{[-k, i[}, \eta_i=1)\\
& \hspace{3cm} \otimes \calL(\xi_{]i, \infty[} \, | \, \eta_{]i, k]} = a_{]i, k]}, \eta_i = 1)\\
& = \calL(\xi_{]-\infty, i[}  \, | \, \eta_{[-k, k]} = a_{[-k, k]}) \otimes \calL(\xi_{]i, \infty[} \, | \, \eta_{]i, k]} = a_{[-k, k]}). 
\end{align*}
Therefore, if $\eta_{[-k, k]}$ is chosen so that there exists $i \in [0, \ell_0[$ such that $\eta_i=1$, we see that $\xi_{[-m, 0[}$ and $\xi_{[\ell_0, \ell[}$ are independent given $\eta_{[-k, k]}$.

We are now ready to prove \eqref{eq:rel_vwb_proof}. For any $b \in \{0, 1\}^\bbZ$ and any $a \in \{0, 1\}^\bbZ$ such that there exists $i \in [0, \ell_0[$ such that $a_{i} = 1$, the fact that $\xi_{[-m, 0]}$ and $\xi_{[\ell_0, \ell[}$ are relatively independent given $\{\eta_{[-k, k]} = a_{[-k, k]}\}$ implies that the measures $\lambda_\ell(\cdot \, | \, a_{[-k, k]}, b_{[-m, 0]})$ and $\lambda_\ell(\cdot \, | \, a_{[-k, k]})$ have the same marginal on the coordinates of $[\ell_0, \ell[$. So the relative product of those measures over $\xi_{[\ell_0, \ell[}$ is a coupling under which the copies of $\xi_{[\ell_0, \ell[}$ coincide. It follows that
\begin{eq} \label{eq:majoration_dbar}
\bar{d_\ell}\left(\lambda_\ell(\cdot \, | \, a_{[-k, k]}, b_{[-m, 0]}), \lambda_\ell(\cdot \, | \, a_{[-k, k]})\right) \leq \ell_0/\ell \leq \varepsilon.
\end{eq}

By combining \eqref{eq:existence_blocs} and \eqref{eq:majoration_dbar}, we can conclude that
$$\int \bar{d_\ell}\left(\lambda_\ell(\cdot \, | \, a_{[-k, k]}, b_{[-m, 0]}), \lambda_\ell(\cdot \, | \, a_{[-k, k]})\right) d\lambda(a, b) \leq 2 \varepsilon.$$
\end{proof}

\begin{proof}[Proof of Theorem \ref{thm:bernoulli_weak_pinsker}]
First of all, \eqref{eq:limite_entropie} follows directly from Proposition \ref{prop:majoration_entropie}.
Next, from Proposition \ref{prop:VWB_rel}, it follows that $\F_0$ is relatively very weak Bernoulli over $\F_n$, so $\F_{n+1}$ is relatively very weak Bernoulli over $\F_n$ (by part (iii) of Lemma \ref{lem:factors_rel_B_are_B}), so $\F_{n+1}$ is relatively Bernoulli over $\F_n$ (by part (i) of Lemma \ref{lem:factors_rel_B_are_B}).
\end{proof}

\subsection{A cellular automaton on Ornstein's K-process}
\label{sect:ornstein_k_non-b}

Here, we consider the non-Bernoulli K-system introduced by Ornstein in \cite{exemple_K-sys_non_bernoulli}. A more detailed presentation of this system is given in \cite[Part III]{book_ornstein}, but we give a sketch of the construction for completeness. It is a process defined on the alphabet $\{0, e, f, s\}$. We set $h(r)$, $s(r)$ and $f(r)$ to be integers depending on $r \in \bbN$ used in the construction of the process. For $r \geq 1$, an $r$-block is a random sequence of length $h(r)$ on the alphabet $\{0, e, f, s\}$, whose law we define inductively. 

To get a $1$-block, take $k_1 \in \llbracket 1, f(1)-1 \rrbracket$ chosen uniformly at random, and consider a sequence that starts with a string of $k_1$ <<$f$>>, followed by a string of $h(0)$ <<$0$>>, and ends with a string of $f(1) - k_1$ <<$e$>>: 
\vspace{3mm}

\centerline{\includegraphics[scale=0.4]{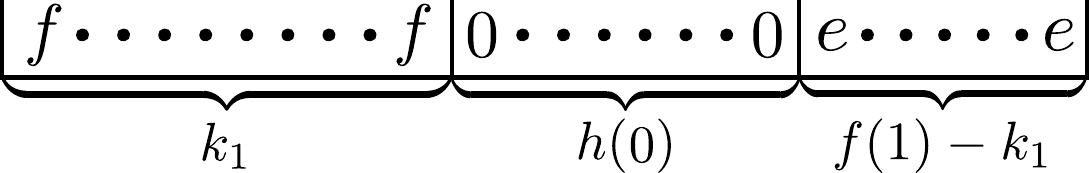}}
\noindent This construction implies that $h(1)=h(0)+ f(1)$.

To get an $r$-block, take $k_r \in \llbracket 1, f(r)-1 \rrbracket$ chosen uniformly at random, and $2^{r}$ i.i.d. random variables $(\xi^{(r-1)}_i)_{i \in \llbracket 1, 2^{r} \rrbracket}$ such that each $\xi^{(r-1)}_i$ is an $(r-1)$-block. The $r$-block is then built as follows:
\vspace{3mm}

\centerline{\includegraphics[scale=0.37]{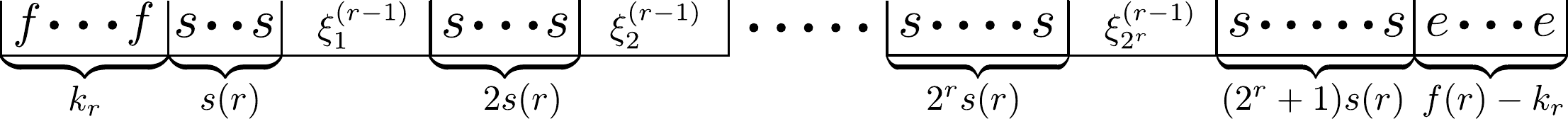}}
So an $r$-block starts with a string of $k_r$ <<$f$>>, and ends with a string of $f(r) - k_r$ <<$e$>>. In between, we put all the $(r-1)$-blocks separated by strings of <<$s$>> so that each $\xi_i^{(r-1)}$ is placed in between two strings of <<$s$>> of respective lengths $i s(r)$ and $(i+1) s(r)$. In particular, $h(r)$ is entirely determined by $h(r-1)$, $f(r)$ and $s(r)$. 

Ornstein's K-system is then built by constructing an increasing sequence of towers $(\calT_r)_{r \geq 1}$ such that $X := \bigcup_{r \geq 1} \calT_r$. A tower $\calT_r$ is given by its base $F_r$ for which the sets $\{T^iF_r\}_{i \in [0, h(r)[}$ are disjoint and 
$$\calT_r := \bigsqcup_{i=0}^{h(r) -1 } T^iF_r.$$
Through a cutting and stacking method, Ornstein builds in \cite{exemple_K-sys_non_bernoulli} the towers $(\calT_r)_{r \geq 1}$ along with a process $\xi$ so that the law of $\xi_{[0, h(r)[}$ given $F_r$ is the law of an $r$-block. In other words, this means that the columns of the form
$$C_\bfa := \bigsqcup_{i = 0}^{h(r)-1} T^i(F_r \cap \{ \xi_{[0, h(r)[} = \bfa\}), \, \text{ for } \bfa \in \{0, e, f, s\}^{h(r)},$$
partition $\calT_r$ according to the law of an $r$-block. Denote $\bfX := (X, \A, \mu, T)$ the resulting dynamical system. A proper choice of $h(r)$, $s(r)$ and $f(r)$ assures that this construction gives a finite measure. Then $\xi$ is a factor map onto the system 
$$\bfY := (\{0, e, f, s\}^\bbZ, \B, \nu, S),$$
where $\nu$ is the law of $\xi$.

Since $\xi$ is a process on the alphabet $\{0, e, f, s\}$, the local function \eqref{eq:local_function} becomes:
$$
\begin{array}{cccc}
	\tau_0: & \{0, e, f, s\}^2 &\to & \{0, e, f, s\} \\
	& (\alpha_1, \alpha_2) &\mapsto &  \left\{ 
\begin{array}{ll}
	\alpha_1 & \text{ if } \, \alpha_1 = \alpha_2\\
	0 & \text{ otherwise}.
\end{array}
\right.
\end{array}
$$
From now on, $\tau$ denotes the corresponding cellular automaton. Similarly to what we did in Section \ref{sect:cellular_b_shift}, we prove 
\begin{thm} \label{thm:ornstein_K_weak_pinsker}
On the system $\bfY := (\{0, e, f, s\}^\bbZ, \B, \nu, S)$, the filtration given by $\F := (\s(\tau^{|n|}))_{n \leq 0}$ is a weak Pinsker filtration. That is, for every $n \leq -1$, $\F_{n+1}$ is relatively Bernoulli over $\F_{n}$ and we have
\begin{eq} \label{eq:limite_entropie_K_sys}
h_\nu(\F_n) \underset{n \rightarrow - \infty}{\to} 0.
\end{eq}
\end{thm}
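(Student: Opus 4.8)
The plan is to run the very same scheme as in the proof of Theorem~\ref{thm:bernoulli_weak_pinsker}. The convergence \eqref{eq:limite_entropie_K_sys} is immediate from Proposition~\ref{prop:general_entropy} (with $\#A=4$). For the relative Bernoullicity of $\F_{n+1}$ over $\F_n$ it suffices, exactly as there, to prove the analogue of Proposition~\ref{prop:VWB_rel}: \emph{for every $n\geq1$, the coordinate process $\xi$ on $\bfY$ is relatively very weak Bernoulli over $\eta:=\tau^n\xi$.} Granting this, $\F_0=\s(\xi)$ is relatively very weak Bernoulli over $\F_n=\s(\eta)$; Lemma~\ref{lem:factors_rel_B_are_B}(iii) applied with $\C=\F_{n+1}$ shows that $\F_{n+1}=\F_n\vee\F_{n+1}$ is relatively very weak Bernoulli over $\F_n$, and then Lemma~\ref{lem:factors_rel_B_are_B}(i) shows that $\F_{n+1}$ is relatively Bernoulli over $\F_n$. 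Both invocations are legitimate: $\bfY$ lives on the finite alphabet $\{0,e,f,s\}$, hence has finite entropy, and is a K-system, hence ergodic. Since relative very weak Bernoullicity depends only on the joint law of $(\xi,\eta)$, I would carry out the argument directly on the original system $\bfX$, where the whole cutting-and-stacking structure of \cite{exemple_K-sys_non_bernoulli} is available.

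To prove this, I would mimic the proof of Proposition~\ref{prop:VWB_rel}, replacing the marker ``a long run of $1$'s'' by ``a long run of $s$'s''. Fix $\eps>0$ and $n\geq1$. In Ornstein's construction $s(r)\to\infty$ and $f(r)\to\infty$, so there is $r_0$ with $s(r)>n$ and $f(r)>2n$ for all $r\geq r_0$; the strings of $s$'s separating two consecutive $(r-1)$-sub-blocks inside an $r$-block then have length $>n$ once $r\geq r_0$, and these runs occur in $\xi$ with positive frequency because $\bfX$ is ergodic. Hence there is $\ell_0\geq1$ such that, with probability at least $1-\eps$, $\xi$ contains a maximal run of $s$'s of length $>n$ contained in $[0,\ell_0[$. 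Set $\ell:=\lceil1/\eps\rceil\ell_0$. A run of $s$'s of length $L$ in $\xi$ becomes a run of $s$'s of length $L-n$ in $\eta=\tau^n\xi$, so for $k$ large this event is $\eta_{[-k,k]}$-measurable and, on it, $\eta_{[-k,k]}$ determines the leftmost such run $R$, which sits strictly between $0$ and $\ell_0$. The key claim is that, conditionally on $\eta_{[-k,k]}$ and on the presence of such an $R$, the parts of $\xi$ strictly to the left of $R$ and strictly to the right of $R$ are independent; in particular $\xi_{[-m,0[}$ and $\xi_{[\ell_0,\ell[}$ are conditionally independent given $\eta_{[-k,k]}$. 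Granting it, the argument finishes word for word as in Proposition~\ref{prop:VWB_rel}: the conditional laws $\lambda_\ell(\cdot\mid a_{[-k,k]},b_{[-m,0[})$ and $\lambda_\ell(\cdot\mid a_{[-k,k]})$ then have the same marginal on the $\geq(1-\eps)\ell$ coordinates in $[\ell_0,\ell[$, so their $\bar{d_\ell}$-distance is at most $\ell_0/\ell\leq\eps$ on the good event; integrating (and halving $\eps$ at the outset) gives the bound required by Definition~\ref{def:VWB_rel}.

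The substance of the proof is the key claim, and this is where the hierarchical structure of Ornstein's process has to replace the independence of coordinates that was free in the Bernoulli case. A maximal run of $s$'s of length $>n$ is, by construction, a separating run at a unique level $r\geq r_0$: it lies between two consecutive $(r-1)$-sub-blocks $\xi^{(r-1)}_i$, $\xi^{(r-1)}_{i+1}$ of an $r$-block. Since the $2^r$ sub-blocks of each $r$-block are i.i.d., since distinct $r$-blocks inside a common $(r+1)$-block are i.i.d., and so on up the hierarchy, conditionally on the ``skeleton'' $\mathcal{S}$ in a large but bounded window around the origin --- the positions of all level-$\geq r$ block boundaries together with the shifts $k_{r'}$, $r'\geq r$, of the blocks meeting that window --- the portion of $\xi$ around $0$ is a concatenation of \emph{independent} sub-blocks separated by runs that $\mathcal{S}$ determines, so the parts of $\xi$ on the two sides of $R$ are conditionally independent given $\mathcal{S}$. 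Moreover $\mathcal{S}$ is $\eta_{[-k,k]}$-measurable once $k$ is large: at each level $r'\geq r_0$ one of the two boundary runs, of lengths $k_{r'}$ and $f(r')-k_{r'}$ (which sum to $f(r')>2n$), has length $>n$ and is therefore recorded by $\eta$, and together with all the other long $e$-, $f$- and $s$-runs that $\eta$ records this pins down every level-$\geq r$ boundary and shift in the window. Finally $\eta$ is forced to the value $0$ at the at most $n$ positions straddling each end of $R$, and $\eta$ restricted to either side of $R$ is a function of the corresponding side of $\xi$ together with $\mathcal{S}$; a conditional-independence manipulation of the type of Lemma~\ref{lem:conditional_indep}, with $\mathcal{S}$ in the role of the conditioning $\s$-algebra, then upgrades the decoupling given $\mathcal{S}$ to the decoupling given $\eta_{[-k,k]}$, which is the key claim.

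The hard part is exactly this upgrade. In the Bernoulli setting a single long run of the marker symbol decoupled past and future ``for free'', since the coordinates are i.i.d.; here one must instead follow how the layered independence of Ornstein's blocks meshes with the partial information that $\eta=\tau^n\xi$ retains, verifying both that $\eta_{[-k,k]}$ recovers enough of the level-$\geq r$ skeleton near the origin and that the residual randomness on the two sides of $R$ stays conditionally independent. I expect this to be bookkeeping-intensive but to require no idea beyond the construction of \cite{exemple_K-sys_non_bernoulli}. Once the key claim is in hand, the proof of Theorem~\ref{thm:ornstein_K_weak_pinsker} is completed exactly as that of Theorem~\ref{thm:bernoulli_weak_pinsker}.
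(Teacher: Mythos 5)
Your top-level plan (Proposition~\ref{prop:general_entropy} for the entropy, then an analogue of Proposition~\ref{prop:VWB_rel} followed by Lemma~\ref{lem:factors_rel_B_are_B}(iii) and~(i)) matches the paper's. But you and the paper diverge on how to prove the RVWB statement, and the divergence matters: the paper does \emph{not} transpose the single-marker, past/future decoupling of Proposition~\ref{prop:VWB_rel}, precisely because that route runs into the difficulty you yourself flag as ``the hard part.'' Instead, the paper decouples at \emph{every} $r$-block. One picks $r$ with $s(r+1)\geq n+1$ and $\mu(\calT_r)\geq 1-\eps/2$. The positions $I_\ell$ of the $r$-blocks on $[-m,\ell[$ are $\eta_{[-k,k]}$-measurable for $k$ large (the $s$-runs of length $\geq s(r+1)$ survive in $\eta$ and pin down the $r$-block boundaries). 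Given $I_\ell$, for each $r$-block $B$ the content $\xi_B$ is independent of $\xi_{B^c}$; moreover $\eta_B$ is $\xi_B$-measurable and $\eta_{B^c}$ is $\xi_{B^c}$-measurable, because the bordering $s$-runs have length $\geq n+1$ so the automaton's window never couples $B$ to its complement. Lemma~\ref{lem:conditional_indep} then upgrades this to conditional independence given $\eta_{[-k,k]}$, and the two conditional laws $\lambda_\ell(\cdot\mid a_{[-k,k]},b_{[-m,0]})$ and $\lambda_\ell(\cdot\mid a_{[-k,k]})$ agree on the union of the $r$-blocks contained in $[0,\ell[$, a set of $\geq(1-2\eps)\ell$ coordinates by the ergodic theorem. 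No skeleton to track, no analysis inside any single separating run, no identification of which level a given $s$-run belongs to.

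Your version has a genuine gap exactly at the step you label ``the hard part,'' plus a secondary issue. The secondary issue: ``a maximal run of $s$'s of length $>n$ is, by construction, a separating run at a unique level $r\geq r_0$'' is not automatic, since a level-$r'$ separating run has length $i\cdot s(r')$ with $i$ ranging up to $2^{r'}+1$, so a low-level run can easily exceed $n$ even when $s(r')\leq n$; the paper never needs to identify the level of any individual run. The main gap: decoupling $\xi_{[-m,0[}$ from $\xi_{[\ell_0,\ell[}$ across a single run $R$ requires you to split the very $r$-block $B$ whose separating run is $R$ into a left half and a right half, show those halves are conditionally independent given $\eta_{[-k,k]}$, and show $\eta_B$ factorises into a function of each half --- a sub-block-level verification that you sketch but do not carry out, and for which the phrase ``a conditional-independence manipulation of the type of Lemma~\ref{lem:conditional_indep}'' is not yet a proof. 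What Ornstein's construction hands you for free is that $r$-block contents are i.i.d.\ given their positions, and each $r$-block is insulated from its complement (for the automaton) by $s$-runs of length $\geq n+1$. Exploiting that directly, by comparing the two conditional laws on the union of $r$-blocks in $[0,\ell[$ rather than on the single interval $[\ell_0,\ell[$, is both simpler and avoids the residual-dependence bookkeeping entirely; you should reorganise your RVWB argument around the $r$-blocks.
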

The overall structure of the proof will resemble Section \ref{sect:cellular_b_shift}, but the details are adapted to the specific structure of Ornstein's process. First, the convergence to $0$ of the entropy follows from Proposition \ref{prop:general_entropy}. We could also adapt the proof of Proposition \ref{prop:majoration_entropie} to get that convergence, but it does not give a better rate of convergence than Proposition \ref{prop:general_entropy}, so we do not give any details.

\begin{prop}
If $\xi$ is the process defined above, then for every $n \geq 1$, $\xi$ is relatively very weak Bernoulli over $\tau^n\xi$. 
\end{prop}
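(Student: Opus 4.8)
The plan is to imitate the proof of Proposition~\ref{prop:VWB_rel}, replacing the elementary facts used there about i.i.d.\ sequences by the hierarchical independence built into Ornstein's construction. Write $\eta := \tau^n\xi$ and let $\lambda$ be the law of $(\eta,\xi)$; fix $\eps>0$. We must produce $\ell\geq 1$ so that for every $m\geq 1$ and every $k$ large enough,
$$\int \bar{d_\ell}\big(\lambda_\ell(\cdot \, | \, a_{[-k,k]}, b_{[-m,0]}),\ \lambda_\ell(\cdot \, | \, a_{[-k,k]})\big)\, d\lambda(a,b) \leq \eps.$$
A value $\eta_i\neq 0$ means exactly that $\xi_{[i,i+n]}$ is a constant run of some $\alpha\in\{e,f,s\}$ of length at least $n+1$, and Ornstein's construction does contain such runs with positive probability — for instance the $s$-separators inside an $r$-block have lengths $m\,s(r)$, which all exceed $n$ once $r$ is large — so $\nu(\eta_0\neq 0)>0$. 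Since $\bfY$ is ergodic (a factor of Ornstein's K-system), the ergodic theorem gives $\ell_0\geq 1$ with $\nu(A)\geq 1-\eps$, where $A:=\{\exists\, i\in[0,\ell_0[\,;\ \eta_i\neq 0\}$; we then set $\ell:=\lceil 1/\eps\rceil\,\ell_0$, and $k$ will be taken large below.

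The crux is the decoupling step. On $A$, let $i$ be the least index in $[0,\ell_0[$ with $\eta_i\neq 0$; then $\xi_{[i,i+n]}$ is part of a maximal constant run of $\alpha$ which, by the structure of the construction, is one of the \emph{structural markers} of some $r$-block: an $f$-prefix, an $e$-suffix, or an $s$-separator between two consecutive $(r-1)$-sub-blocks. I would argue that, for $k$ large enough, with probability at least $1-2\eps$ this marker together with the coarse block structure around it is recovered from $\eta_{[-k,k]}$ — runs much longer than $k$ have negligible probability, and $\eta$, being $\xi$ with short runs collapsed to $0$, preserves the hierarchy of long markers — and that, conditionally on $\eta_{[-k,k]}$ on this event, the part of $\xi$ lying to the left of the marker and the part lying to its right are conditionally independent. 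This last fact is precisely the independence of the sub-blocks in Ornstein's inductive cutting-and-stacking scheme: once the coarse parse down to level $r$ is fixed, any two disjoint families of sub-blocks are independent. Since the marker sits at position $i<\ell_0$, it separates the windows $[-m,0[$ and $[\ell_0,\ell[$, so Lemma~\ref{lem:conditional_indep}, applied exactly as in Section~\ref{sect:cellular_b_shift}, gives that $\xi_{[-m,0[}$ and $\xi_{[\ell_0,\ell[}$ are conditionally independent given $\eta_{[-k,k]}$ on that event.

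From here the conclusion is routine: when $a_{[-k,k]}$ lies in the good event, $\lambda_\ell(\cdot \, | \, a_{[-k,k]}, b_{[-m,0]})$ and $\lambda_\ell(\cdot \, | \, a_{[-k,k]})$ have the same marginal on the coordinates $[\ell_0,\ell[$, so their relative product over $\xi_{[\ell_0,\ell[}$ is a coupling under which those coordinates agree and hence $\bar{d_\ell}\big(\lambda_\ell(\cdot \, | \, a_{[-k,k]}, b_{[-m,0]}),\ \lambda_\ell(\cdot \, | \, a_{[-k,k]})\big)\leq \ell_0/\ell\leq\eps$; adding the measure at most $2\eps$ of the bad event bounds the integral by $3\eps$, which suffices after relabelling $\eps$. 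The genuinely new difficulty is the decoupling step: in the Bernoulli case a run of $n+1$ equal symbols makes the past and future of $(\xi,\eta)$ measurable with respect to disjoint i.i.d.\ coordinates, so independence is immediate, whereas here one must first check that $\eta_{[-k,k]}$ resolves, with high probability, the coarse parse of the process around the marker — a property of Ornstein's construction — and only then invoke the hierarchical independence of the sub-blocks in place of outright independence of coordinates.
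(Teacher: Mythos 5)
Your plan is recognizably the same strategy as for the Bernoulli case, but the single decoupling step that made that case work does not transfer, and this is exactly where the paper's proof diverges from yours.

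You locate one index $i$ with $\eta_i\neq 0$, argue that $\eta_{[-k,k]}$ recovers "the coarse block structure around it'', and conclude that $\xi$ to the left of the marker and $\xi$ to the right are conditionally independent given $\eta_{[-k,k]}$. This last claim is false in general. The marker $\xi_{[i,i+n]}$ sits, for every level $r'$, in the interior of some $r'$-block; say the smallest relevant one is an $s$-separator inside an $(r+1)$-block, between two consecutive $r$-sub-blocks. Cutting at $i$ then puts the $f$-prefix of that $(r+1)$-block (of random length $k_{r+1}$) to the left and its $e$-suffix (of length $f(r+1)-k_{r+1}$) to the right, so the two halves are deterministically coupled through $k_{r+1}$. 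Moreover $\eta_{[-k,k]}$ need not determine $k_{r+1}$: the $f$-prefix is wiped out by $\tau^n$ whenever $k_{r+1}<n+1$, and the short $s$-separator adjacent to it may also be invisible. So conditioning on $\eta_{[-k,k]}$ does not break this correlation, and the conditional independence across a single marker simply fails. The slogan ``disjoint families of sub-blocks are independent once the parse is fixed'' is correct about \emph{block contents}, but your cut does not land on a block boundary and the parse is not in fact recovered from $\eta$.

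The paper avoids this by never cutting through a block. It picks $r$ with $s(r+1)\ge n+1$ so that only the $(r+1)$-level $s$-separators, and hence the positions $I_\ell$ of the $r$-blocks \emph{as intervals}, are $\eta_{[-k,k]}$-measurable — the interior parse of each $r$-block is not needed and is not claimed to be visible. It then does the boundary analysis your sketch flags but skips: because an $r$-block begins with $f$'s and is flanked by $s$-runs of length $\ge n+1$, one checks that, given $I_\ell$, the restriction $\eta_B$ is $\xi_B$-measurable and $\eta_{B^c}$ is $\xi_{B^c}$-measurable for each $r$-block $B$. Since $\xi_B$ is, given $I_\ell$, independent of $\xi_{B^c}$ — here $B$ is a \emph{whole} $r$-block, so $\xi_B$ and $\xi_{B^c}$ share no $k_{r'}$ — Lemma~\ref{lem:conditional_indep} yields $\xi_B\perp_{\eta_{[-k,k]}}\xi_{B^c}$ for every such $B$. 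Finally the paper does not use one split point but the whole family: by Birkhoff on $\calT_r$, the $r$-blocks cover a $(1-2\eps)$-fraction of $[0,\ell[$, the two conditional laws have identical marginals on each $\xi_{B_i}$, and the relative product over $(\xi_{B_i})_i$ gives $\bar{d_\ell}\le 2\eps$ on the good set. In short, the missing ingredient in your proposal is precisely the pair of facts (block positions, not block interiors, are $\eta$-readable; and $\eta$ factorizes across block boundaries), and without it the single-marker decoupling is not valid.
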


\begin{proof}
We set $\eta := \tau^n\xi$. Let $\varepsilon > 0$. Once again, we need to show that there exists $\ell \geq 1$ such that for every $m \geq 1$ and for $k \geq 1$ large enough, we have
$$\int \bar{d_\ell}\left(\lambda_\ell(\cdot \, | \, a_{[-k, k]}, b_{[-m, 0]}), \lambda_\ell(\cdot \, | \, a_{[-k, k]})\right) d\l(a, b) \leq \varepsilon,$$
where $\l$ is the law of $(\eta, \xi)$ and, for $I, J \subset \bbZ$, $\lambda_\ell(\cdot\, | \, a_I, b_J)$ is the conditional law of $\xi_{[0, \ell[}$ given that $\eta_I$ equals $a_I$ and that $\xi_J$ equals $b_J$.

Let $m \geq 1$. We choose $r$ so that $s(r+1) \geq n+1$. By construction of $\xi$, we know that for any $r$-block in $\xi$, there exists $i \in [1, 2^{r+1}]$ such that the said $r$-block will come after a string of $i \cdot s(r+1)$ <<$s$>> and be followed by a string of $(i+1)\cdot s(r+1)$ <<$s$>>. Therefore, by knowing the positions of all the strings of <<$s$>> longer that $s(r+1)$, we know the position of every $r$-block.

However, since we chose to have $s(r+1) \geq n+1$, we can say that, for $k \in \bbZ$, we have $\xi_{[k, k+s(r+1)[} = (s, ..., s)$ if and only if $\eta_{[k, k+s(r+1)-n[}=(s, ..., s)$. This means that the positions of the $r$-blocks contained on a segment $[k_1, k_2]$ are $\eta_{[k_1 - N, k_2 + N]}$-measurable, for $N$ large enough (for example $N = (2^{r+1}+1)s(r+1)$).

By choosing $r$ large enough, we can also assume that $\mu(\calT_r) \geq 1-\varepsilon/2$. Using Birkhoff's ergodic theorem, for $\ell$ large enough, the set 
$$A:= \left\{x \in X ; \, \frac{1}{\ell} \sum_{j=0}^{\ell-1}\mathbbm{1}_{\calT_r}(T^j(x)) > 1 - \eps \right\},$$
satisfies $\mu(A) > 1-\eps$. 

In other words, for $x \in A$, the number of elements in the sequence $\xi_{[0, \ell[}(x)$ that are part of an $r$-block is greater than $(1 - \eps)\ell$. However, among the intervals on which those $r$-blocks are supported, two of them may not be included in $[0, \ell[$, and can intersect $\bbZ \backslash [0, \ell[$.  But, if $h(r)/\ell \leq \eps/2$, there are at most $\eps \ell$ elements in those two intervals. To sum up, we get that the number of elements in the sequence $\xi_{[0, \ell[}(x)$ that are part of an $r$-block, and for which the position of that $r$-block is contained on the segment $[0, \ell[$, is greater than $(1 - 2\eps)\ell$. Then, we choose $k \geq 1$ so that the positions of the $r$-blocks contained in $[-m, \ell[$ are $\eta_{[-k, k]}$-measurable (in particular, $A$ is $\eta_{[-k, k]}$-measurable). So we have the following configuration for $\xi_{[-m, \ell[}$:

\centerline{\includegraphics[scale=0.3]{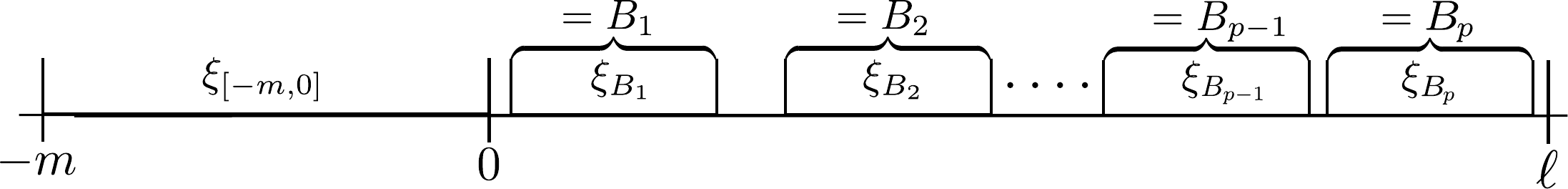}}
\noindent where the $B_i$ are the positions of the $r$-blocks supported on $[0, \ell[$, and we have shown that $\# \bigsqcup_{i = 1}^p B_i \geq (1-2\eps)\ell.$

Denote by $I_\ell := \{B_i\}_{1 \leq i \leq p}$ the random variable that gives the positions of the $r$-blocks on the segment $[0, \ell[$. By construction of $\xi$, we know that, given $I_\ell$, for any $r$-block $B$, the variables $\xi_B$ and $\xi_{B^c}$ are independent. Moreover, we know that any $r$-block is between two strings of at least $n+1$ <<$s$>>. Therefore, we see that if $I_\ell$ is fixed, for any $r$-block $B$, $\eta_B$ is $\xi_B$-measurable and $\eta_{B^c}$ is $\xi_{B^c}$-measurable.

Let us give details on the proof of that last claim: we write $B^c$ as the union of $B^-$ and $B^+$, the infinite intervals that come before and after $B$ respectively. Given the structure of our automaton, it is always true that $\eta_{B^+}$ is $\xi_{B^+}$-measurable. At the boundary between $B^-$ and $B$, we have the following configuration:

\centerline{\includegraphics[scale=0.06]{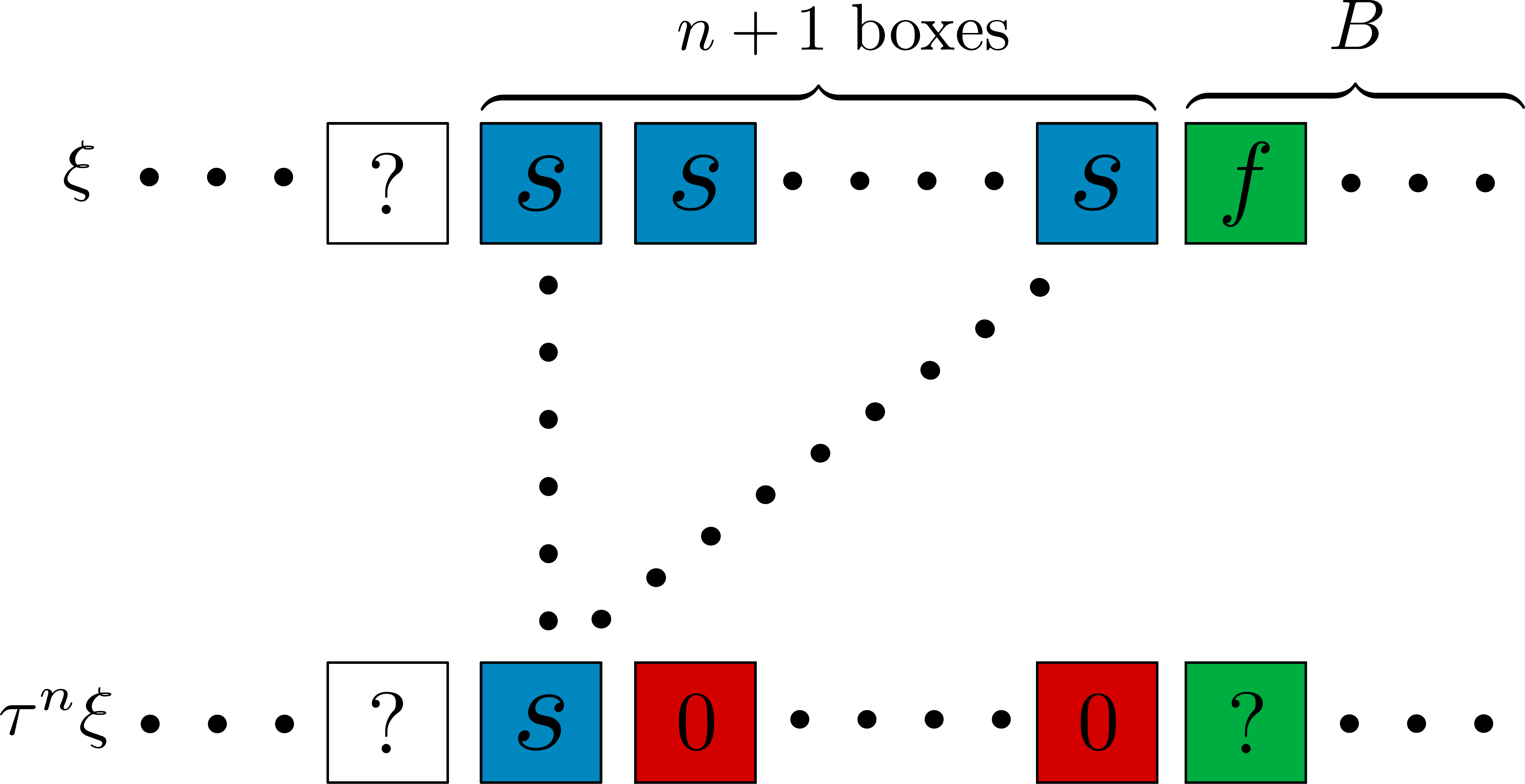}}
\noindent Indeed, in the construction of the blocks, we see that $\xi$ must put an <<$f$>> in the first box of $B$. Therefore, we must have <<$0$>> in the red boxes. So, the values that $\eta$ takes on the $n+1$ boxes preceding $B$ are determined. For the rest of the boxes of $B^-$, it comes from the structure of $\tau$ that the values of $\eta$ are determined by $\xi_{B^-}$ since we are at a distance $\geq n+1$ from $B$. So we have shown that $\eta_{B^-}$ is $(\xi_{B^-})$-measurable. A similar reasoning at the boundary between $B$ and $B^+$ shows that $\eta_B$ is $\xi_B$-measurable. And since it is always true that $\eta_{B^+}$ is $\xi_{B^+}$-measurable, we have proven that $\eta_B$ is $\xi_B$-measurable and $\eta_{B^c}$ is $\xi_{B^c}$-measurable.

But, we also know from the structure of $\xi$ that, given $I_\ell$, $\xi_B$ and $\xi_{B^c}$ are independent. The previous paragraph enables us to use Lemma \ref{lem:conditional_indep} to extend that to: given $I_\ell \vee \eta_{[-k, k]}$, $\xi_B$ and $\xi_{B^c}$ are independent. Finally, since $I_\ell$ is $\eta_{[-k, k ]}$-measurable, this yields that $\xi_{B}$ and $\xi_{B^c}$ are relatively independent given $\eta_{[-k, k]}$.

This independence tells us that, for every sequences $a$ and $b$, $\lambda_\ell(\cdot \, | \, a_{[-k, k]}, b_{[-m, 0]})$ and $\lambda_\ell(\cdot \, | \, a_{[-k, k]})$ have the same marginals on the coordinates of the $r$-blocks $B$ contained in $[0, \ell[$. Moreover, if $a$ is chosen so that $\{\eta_{[-k, k]} = a_{[-k, k]}\}$ is a subset of $A$, we know that the positions of the $r$-blocks cover at least $(1-2\eps)\ell$ elements in $[0, \ell[$. Then, by considering the relative product of $\lambda_\ell(\cdot \, | \, a_{[-k, k]}, b_{[-m, 0]})$ and $\lambda_\ell(\cdot \, | \, a_{[-k, k]})$ over $\{\xi_{B_i}\}_{1 \leq i \leq p}$, we get:
$$\bar{d_\ell}\left(\lambda_\ell(\cdot \, | \, a_{[-k, k]}, b_{[-m, 0]}), \lambda_\ell(\cdot \, | \, a_{[-k, k]})\right) \leq 2 \varepsilon.$$
Finally, since $\mu(A) \geq 1-\varepsilon$, this yields 
$$\int \bar{d_\ell}\left(\lambda_\ell(\cdot \, | \, a_{[-k, k]}, b_{[-m, 0]}), \lambda_\ell(\cdot \, | \, a_{[-k, k]})\right) d\nu(a, b) \leq 3 \varepsilon.$$
\end{proof}

\begin{rmq}
We see that the proofs of Theorem \ref{thm:ornstein_K_weak_pinsker} and Theorem \ref{thm:bernoulli_weak_pinsker} are very similar. In both cases, we have a process $\xi$, whose conditional law given $\tau^n\xi$ is made of random blocks separated by deterministic blocks, and the random blocks are filled independently from each other. The main difference that prevents Ornstein's K-process from being Bernoulli is that the position of $r$-blocks is determined by the long sequences of <<$s$>>, and this creates correlations over long distances. But once we condition by $\tau^n \xi$, those sequences of <<$s$>> are entirely determined. Therefore we are left with filling independently all the $r$-blocks, and the past has no longer a significant influence on the future.

In that sense, when we look at the relative structure of Ornstein's K-process over $\tau^n$, the non-Bernoulli aspects disappear. However, when we look at the asymptotic properties of the weak Pinsker filtration obtained by applying $\{\tau^n\}_{n \geq 1}$, whether we start with a Bernoulli process or with a non-Bernoulli K-process, we get different results. Therefore, getting a better understanding of the classification of the various properties of weak Pinsker filtrations could help to develop a new classification of non-Bernoulli K-systems. 
\end{rmq}

\subsection*{Acknowledgements}
The author thanks Jean-Paul Thouvenot for the fruitful discussions, the insights and the ongoing exchanges regarding the present work.

\bibliographystyle{plain}
\bibliography{biblio_manuscrit}

\end{document}